\documentclass[12pt]{amsart}
\usepackage{amssymb,amsthm}
\usepackage{amsmath}
\usepackage[foot]{amsaddr}
\usepackage{tikz}
\usetikzlibrary{arrows}
\usepackage{graphicx}
\usepackage{subcaption}
\usepackage[shortlabels]{enumitem}
\usetikzlibrary{arrows}
\usepackage{float}
\usepackage{graphicx}
\usepackage{subcaption}
\usepackage{hyperref}
\usepackage{pgfplots}


\graphicspath{{Ellipse_figs/}}
\usetikzlibrary{decorations.pathmorphing}
\tikzset{snake it/.style={decorate, decoration=snake}}

\makeatletter
\newcommand*{\rom}[1]{\expandafter\@slowromancap\romannumeral #1@}
\makeatother

\numberwithin{equation}{section}

\theoremstyle{plain}
\newtheorem{theorem}{Theorem}
\numberwithin{theorem}{section}
\newtheorem{proposition}[theorem]{Proposition}

\newtheorem{corollary}[theorem]{Corollary}
\theoremstyle{definition}
\newtheorem{definition}[theorem]{Definition}
\theoremstyle{remark}
\newtheorem{remark}[theorem]{Remark}
\newtheorem{example}[theorem]{Example}
\theoremstyle{remark}

\theoremstyle{remark}

\newcommand{\Ceps}{C_{\epsilon}}

\newcommand{\eps}{\epsilon}
\newcommand{\smo}{\setminus \mathbf{0}}
\newcommand{\zero}{\mathbf{0}}

\newcommand{\norm}[1]{\left\lVert#1\right\rVert}      
\newcommand{\abs}[1]{\left|#1\right|}                 
\newcommand{\paren}[1]{\left(#1\right)}               
\newcommand{\bparen}[1]{\left[#1\right]}               
\newcommand{\sparen}[1]{\left\{#1\right\}}      

\renewcommand{\d}{\,\mathrm{d}}  
\newcommand{\sym}{\mathrm{Sym}(n)}
\newcommand{\dd}{\mathrm{d}}  


\newcommand{\Cc}{\mathcal{C}}

\newcommand{\Dc}{\mathcal{D}}
\newcommand{\Ec}{\mathcal{E}}
\newcommand{\Fc}{\mathcal{F}}

\newcommand{\Nc}{\mathcal{N}}

\newcommand{\Sc}{\mathcal{S}}

\newcommand{\WF}{\mathrm{WF}}                         
\newcommand{\wf}{\mathrm{WF}}                         

\newcommand{\vb}{\mathbf{b}}

\newcommand{\inv}{^{-1}}

\newcommand{\partyf}[2]{\frac{\partial #2}{\partial y_{#1}}}

\newcommand{\vu}{{\mathbf{u}}}
\newcommand{\vv}{{\mathbf{v}}}

\newcommand{\bpm}{\begin{pmatrix}}
\newcommand{\epm}{\end{pmatrix}}

\newcommand{\vx}{{\mathbf{x}}}

\newcommand{\vso}{\mathbf{s}_0}

\newcommand{\vy}{{\mathbf{y}}}
\newcommand{\vz}{{\mathbf{z}}}
\newcommand{\vs}{\mathbf{s}}

\newcommand{\vxi}{{\boldsymbol{\xi}}}
\newcommand{\vxio}{{\boldsymbol{\xi}_0}}

\newcommand{\vsig}{{\boldsymbol{\sigma}}} 

\newcommand{\phio}{\phi_0}

\newcommand{\hxo}{\hat{x}_1} %
\newcommand{\hxt}{\hat{x}_2} %
\newcommand{\hphi}{\hat{\phi}}

\newcommand{\fnshphi}{\text{\footnotesize{$\hphi$}}}

\newcommand{\rr}{{{\mathbb R}}}

\newcommand{\rn}{{{\mathbb R}^n}}

\newcommand{\dR}{\dot{\mathbb{R}}}
\newcommand{\drn}{{\dot{{\mathbb R}^n}}}
\newcommand{\rnmo}{{\mathbb{R}^{n-1}}}

\newcommand{\st}{\hskip 0.3mm : \hskip 0.3mm}

\newcommand{\be}{\begin{equation}}
\newcommand{\ee}{\end{equation}}
\newcommand{\bea}{\begin{eqnarray}}
\newcommand{\eea}{\end{eqnarray}}
\newcommand{\bean}{\begin{eqnarray*}}
\newcommand{\eean}{\end{eqnarray*}}

\newcommand{\bel}[1]{\begin{equation}\label{#1}}

\newcommand{\eel}[1]{{\label{#1}\end{equation}}}

\DeclareMathOperator*{\argmin}{arg\,min}
\DeclareMathOperator{\sinc}{sinc}



\usepackage{fullpage}



\usepackage{kbordermatrix}


\usepackage{datetime}
\usetikzlibrary{quotes, angles}


\newcommand\irregularcircle[2]{
  \pgfextra {\pgfmathsetmacro\len{(#1)+rand*(#2)}}
  +(0:\len pt)
  \foreach \a in {10,20,...,350}{
    \pgfextra {\pgfmathsetmacro\len{(#1)+rand*(#2)}}
    -- +(\a:\len pt)
  } -- cycle
}

\title[short]{Ellipsoidal and hyperbolic Radon transforms; microlocal properties and injectivity\\{\footnotesize\ddmmyyyydate\today~\currenttime}}
\author{James W. Webber\textsuperscript{$\dagger$}}
\author{Sean Holman\textsuperscript{$\ddagger$}}
\author{Eric Todd Quinto*}
\address[James W. Webber (corresponding author)]{Department of Oncology and Gynecology, Brigham and Women's Hospital, 221 Longwood Ave. Boston, MA 02115}
\address[Sean Holman]{Department of Mathematics, The University of Manchester, Alan Turing Building, Oxford Road, Manchester M13 9PY}
\address[Eric Todd Quinto]{Department of Mathematics, Tufts
University, 177 College Ave, Medford, MA 02155}
\email[A1,A2]{jwebber5@bwh.harvard.edu\textsuperscript{$\dagger$}, sean.holman@manchester.ac.uk\textsuperscript{$\ddagger$}}
\email[A3]{todd.quinto@tufts.edu*}

\providecommand{\keywords}[1]
{
  \small	
  \textbf{\textit{Keywords---}} #1
}

\begin{document}

\begin{abstract}
We present novel microlocal and injectivity analyses of ellipsoid and
hyperboloid Radon transforms. We introduce a new Radon transform,
$R$, which defines the integrals of a compactly supported $L^2$
function, $f$, over ellipsoids and hyperboloids with centers on a
smooth connected surface, $S$. 
$R$ is shown to be a Fourier Integral Operator (FIO) and in our main
theorem we prove that $R$ satisfies the Bolker condition if  the
support of $f$ is connected and not intersected by any plane tangent
to $S$. Under
certain conditions, this is an equivalence.
We give examples where our theory can be applied.   
Focusing specifically on a cylindrical geometry of interest in Ultrasound Reflection Tomography (URT), we prove injectivity results and investigate the visible singularities. In addition, we present example reconstructions of image phantoms in two-dimensions, and validate our microlocal theory.
\end{abstract}

\keywords{ellipsoids, hyperboloids, Radon transforms, microlocal analysis, stability, injectivity}

\maketitle

\section{Introduction}

In this paper, we introduce a novel Radon transform, $R$, which defines the integrals of compactly supported $L^2$ functions in $\mathbb{R}^n$ over ellipsoid, two-sheeted hyperboloid, and elliptic hyperboloid surfaces, with centers on a smooth, $(n-1)$-dimensional hypersurface, which we denote by $S$. $R$ has applications in many imaging fields, such as Ultrasound Reflection Tomography (URT),  Photoacoustic Tomography (PAT), ground penetrating radar, and Synthetic Aperture Radar (SAR). We present a novel microlocal and injectivity analysis of $R$, and determine the singularities (image edges) detected by $R$ in examples of interest in URT.

The literature considers microlocal and injectivity analysis of
spherical and ellipsoidal Radon transforms
\cite{gouia2012approximate,krishnan2012microlocal,SU:SAR2013,Caday:SAR,Rubin-sonar,CokerTewfik,ABKQ2013,HomanZhou,Nguyen-Pham,Co1963,haltmeier2017spherical,rubin2002inversion,kunyansky2007explicit,grathwohl2020imaging,moon2016inversion,
andersson1988determination}. Analytic uniqueness is considered in
\cite{HomanZhou}. In \cite{Nguyen-Pham}, the authors consider a Radon transform, $\mathcal{R}$, which defines the integrals of an $n$-D function over $(n-1)$-dimensional spheres with centers on a smooth, strongly convex hypersurface, denoted by $\mathcal{S}$ (using the notation of \cite{Nguyen-Pham}). The authors show that $\mathcal{R}$ is a Fourier Integral Operator (FIO) with left projection that drops rank on planes tangent to $\mathcal{S}$. More precisely, the left and right projections of $\mathcal{R}$ are shown to be Whitney folds. This means that there are artifacts in filtered backprojection type reconstructions from $\mathcal{R}f$ data which are reflections in hyperplanes tangent to $\mathcal{S}$.

In \cite{ABKQ2013}, the authors present a microlocal analysis of an elliptic Radon transform, $\mathcal{R}$ (to adopt the notation of \cite{ABKQ2013}), of interest in two-dimensional URT. The authors consider a scanning modality, whereby a single emitter-receiver pair, kept a fixed distance apart, are rotated about the origin on lines tangent to the unit circle. The reflectivity function, which is the reconstruction target in URT, is supported on the interior of the unit circle. $\mathcal{R}f$ has two degrees of freedom, which are the major diameter of the ellipse, and the position of ellipse center, which lies on the unit circle and follows the emitter-receiver rotation. The authors prove that $\mathcal{R}$ is an elliptic FIO with conical relation, $\mathcal{C}$, which satisfies the Bolker condition. After which, it is shown that the normal operator of $\mathcal{R}$ is an elliptic Pseudodifferential Operator (PDO), order $-1$, and thus the inverse of $\mathcal{R}$ is stable on Sobolev scale $\frac{1}{2}$. 

In \cite{haltmeier2017spherical}, the authors consider a spherical Radon transform. The spheres of integration have centers restricted to  cylindrical hypersurfaces of the form $\Gamma \times \mathbb{R}^{m}$, where $\Gamma$ is a hypersurface in $\mathbb{R}^n$. The authors present a general methodology for inverting spherical Radon transforms with center set $\Gamma \times \mathbb{R}^{m}$. Specifically, the authors show that if an inversion formula is known for the center set $\Gamma$, then this can be extended to $\Gamma \times \mathbb{R}^{m}$. They apply the theory of \cite{andersson1988determination}, which provides inversion formulae for the spherical Radon transform with a flat plane center set, to derive inversion formulae for elliptic and circular cylinder center sets. Numerical results are also provided when the set of sphere centers is an elliptic cylinder, and the authors present simulated reconstructions of image phantoms from spherical integral data using the proposed formulae. A blurring effect is observed near sharp discontinuities in the image reconstructions, indicating that all singularities are not well resolved when the center set is an elliptic cylinder.

In our work, we introduce a novel Radon transform, denoted by
$R$, which defines the integrals over ellipsoids and hyperboloids with
centers on a smooth, connected surface, $S$. We show that $R$ is an
FIO. Our central theorem proves that $R$ satisfies the Bolker
condition if and only if $\text{supp}(f)$ is not intersected by any
hyperplane tangent to $S$. The Bolker condition is important as
it relates to image artifacts in filtered backprojection type
reconstructions from Radon transform data, specifically to artifacts
which are additional (unwanted) singularities in the reconstruction
that are not in the object. Such artifacts are also often observed
using iterative solvers and algebraic reconstruction techniques
\cite{WebberQuinto2020I}. If the Bolker condition is satisfied, this
implies reconstruction stability, and unwanted microlocal
singularities are eliminated. Conversely, if the Bolker condition
fails, the capacity for artifacts is amplified. 

The calculations which determine satisfaction of Bolker shed light on
the nature of the image artifacts (should they exist) if Bolker fails
and can be used to predict artifact location and to help suppress
artifacts \cite{felea2013microlocal,webberholman}.

In a similar vein to \cite{Nguyen-Pham}, the
left projection of $R$ is shown to drop rank on planes which are
tangent to $S$ and we discover ``mirror point" type artifacts which
occur on opposite sides of planes tangent to $S$. Specifically, if the tangent planes to $S$ do not intersect $\text{supp}(f)$, then we show
that the artifacts are constrained to lie outside of
$\text{supp}(f)$, and thus the Bolker condition holds. This is one of
the central ideas of our main theorem. In \cite{Nguyen-Pham}, the
surfaces of integration are spheres, which are symmetric about any
plane through their center. This causes the reflection artifacts
discovered in \cite{Nguyen-Pham}. However, ellipsoids and hyperboloids do not share such symmetries, and thus the artifacts we discover are not reflections through planes tangent to $S$, as in \cite{Nguyen-Pham}, but can be understood as a ``perturbed" or ``distorted" reflection. See Section  \ref{sect:visible}, for a more detailed discussion on mirror point artifacts. The microlocal theory we present here is a generalization of the work of \cite{Nguyen-Pham}, to ellipsoid and hyperboloid integration surfaces.

After establishing our central microlocal theorems, we present a number of examples where our theory can be applied, some of which are relevant to URT. We focus on a cylindrical scanning geometry in $\mathbb{R}^3$, of interest in URT, and prove injectivity results. Specifically, we prove that any $L^2$ function, $f$, compactly supported on the interior of a unit cylinder in $\mathbb{R}^3$, can be reconstructed uniquely from its integrals over spheroids with centers on the unit cylinder. A unit cylinder in $\mathbb{R}^3$ is a special case of the more general cylindrical hypersurfaces considered in \cite{haltmeier2017spherical}. The authors of \cite{haltmeier2017spherical} consider spherical integral surfaces, whereas we consider, more general, spheroid integral surfaces. Our injectivity results hold for compactly supported $L^2$ functions, which advances the theory of \cite{haltmeier2017spherical}, as their inversion formulae apply only to smooth functions of compact support. In addition, we show, using Volterra integral equation theory \cite{Q1983-rotation}, that, with limited spheroid radii, one can reconstruct $f$ on cylindrical tubes (or ``layers") which are subsets of the unit cylinder interior. Limited sphere radii are not considered in \cite{haltmeier2017spherical}. We aim to address limited spheroid and sphere radii in this work. 

The remainder of this paper is organized as follows. In section \ref{sect:defns}, we give some definitions from microlocal analysis that will be used in our theorems. In section \ref{sect:theory}, we define our generalized Radon transform and prove our main microlocal theorems, and follow up with some examples in section \ref{sect:examples}. In section \ref{cylinder:sect}, we investigate a cylindrical scanning geometry with applications in URT, and prove our main injectivity theorems. We also discuss in detail the visible singularities and show how the wavefront coverage varies with emitter/receiver discretization. To finish, in section \ref{recon:sect}, we present some example image reconstructions in two-dimensions and verify our microlocal theory.

\section{Definitions from microlocal analysis}\label{sect:defns} 

We next provide some
notation and definitions.  Let $X$ and $Y$ be open subsets of
{$\mathbb{R}^{n_X}$ and $\mathbb{R}^{n_Y}$, respectively.}  Let $\Dc(X)$ be the space of smooth functions compactly
supported on $X$ with the standard topology and let $\mathcal{D}'(X)$
denote its dual space, the vector space of distributions on $X$.  Let
$\Ec(X)$ be the space of all smooth functions on $X$ with the standard
topology and let $\mathcal{E}'(X)$ denote its dual space, the vector
space of distributions with compact support contained in $X$. Finally,
let $\Sc(\rn)$ be the space of Schwartz functions, that are rapidly
decreasing at $\infty$ along with all derivatives. See \cite{Rudin:FA}
for more information.

For a function $f$ in the Schwartz space $\Sc(\mathbb{R}^{n_X})$ or in
$L^2(\rn)$, we use $\mathcal{F}f$ and $\mathcal{F}^{-1}f$ to denote
the Fourier transform and inverse Fourier transform of $f$,
respectively (see \cite[Definition 7.1.1]{hormanderI}).  Note that
$\Fc\inv \Fc f(\vx)= \frac{1}{(2\pi)^{n_X}}\int_{\vy\in\mathbb{R}^{n_X}}\int_{\vz\in
\mathbb{R}^{n_X}} \exp((\vx-\vz)\cdot \vy)\,
f(\vz)\d \vz\d \vy$.

We use the standard multi-index notation: if
$\alpha=(\alpha_1,\alpha_2,\dots,\alpha_n)\in \sparen{0,1,2,\dots}^{n_X}$
is a multi-index and $f$ is a function on $\mathbb{R}^{n_X}$, then
\[\partial^\alpha f=\paren{\frac{\partial}{\partial
x_1}}^{\alpha_1}\paren{\frac{\partial}{\partial
x_2}}^{\alpha_2}\cdots\paren{\frac{\partial}{\partial x_{n_X}}}^{\alpha_{n_X}}
f.\] If $f$ is a function of $(\vy,\vx,\vs)$ then $\partial^\alpha_\vy
f$ and $\partial^\alpha_\vs f$ are defined similarly.

  We identify cotangent
spaces on Euclidean spaces with the underlying Euclidean spaces, so we
identify $T^*(X)$ with $X\times \mathbb{R}^{n_X}$. If $\Phi$ is a function of $(\vy,\vx,\vs)\in Y\times X\times \rr^N$
then we define $\dd_{\vy} \Phi = \paren{\partyf{1}{\Phi},
\partyf{2}{\Phi}, \cdots, \partyf{{n_X}}{\Phi} }$, and $\dd_\vx\Phi$ and $
\dd_\vs \Phi $ are defined similarly. Identifying the cotangent space with the Euclidean space as mentioned above, we let $\dd\Phi =
\paren{\dd_{\vy} \Phi, \dd_{\vx} \Phi,\dd_\vs \Phi}$.


We use the convenient notation that if $A\subset \rr^m$, then $\dot{A}
= A\smo$.

The singularities of a function and the directions in which they occur
are described by the wavefront set \cite[page
16]{duistermaat1996fourier}: 
\begin{definition}
\label{WF} Let $X$ be an open subset of $\rn$ and let $f$ be a
distribution in $\mathcal{D}'(X)$.  Let $(\vx_0,\vxi_0)\in X\times
\drn$.  Then $f$ is \emph{smooth at $\vx_0$ in direction $\vxio$} if
there exists a neighborhood $U$ of $\vx_0$ and $V$ of $\vxi_0$ such
that for every $\Phi\in \Dc(U)$ and $N\in\mathbb{R}$ there exists a
constant $C_N$ such that for all $\vxi\in V$,
\begin{equation}
\left|\Fc(\Phi f)(\lambda\vxi)\right|\leq C_N(1+\abs{\lambda})^{-N}.
\end{equation}
The pair $(\vx_0,\vxio)$ is in the \emph{wavefront set,} $\wf(f)$, if
$f$ is not smooth at $\vx_0$ in direction $\vxio$.
\end{definition}
 This definition follows the intuitive idea that the elements of
$\WF(f)$ are the point--normal vector pairs above points of $X$ at
which $f$ has singularities.  For example, if $f$ is the
characteristic function of the unit ball in $\mathbb{R}^3$, then its
wavefront set is $\WF(f)=\{(\vx,t\vx): \vx\in S^{2}, t\neq 0\}$, the
set of points on a sphere paired with the corresponding normal vectors
to the sphere.



The wavefront set of a distribution on $X$ is normally defined as a
subset the cotangent bundle $T^*(X)$ so it is invariant under
diffeomorphisms, but we do not need this invariance, so we will
continue to identify $T^*(X) = X \times \rn$ and consider $\WF(f)$ as
a subset of $X\times \drn$.


 \begin{definition}[{\cite[Definition 7.8.1]{hormanderI}}] \label{ellip}We define
 $S^m(Y \times X, \mathbb{R}^N)$ to be the
set of $a\in \Ec(Y\times X\times \mathbb{R}^N)$ such that for every
compact set $K\subset Y\times X$ and all multi--indices $\alpha,
\beta, \gamma$ the bound
\[
\left|\partial^{\gamma}_{\vy}\partial^{\beta}_{\vx}\partial^{\alpha}_{\vsig}a(\vy,\vx,\vsig)\right|\leq
C_{K,\alpha,\beta,\gamma}(1+\norm{\vsig})^{m-|\alpha|},\ \ \ (\vy,\vx)\in K,\
\vsig\in\mathbb{R}^N,
\]
holds for some constant $C_{K,\alpha,\beta,\gamma}>0$. 

 The elements of $S^m$ are called \emph{symbols} of order $m$.  Note
that these symbols are sometimes denoted $S^m_{1,0}$.  The symbol
$a\in S^m(Y \times X,\rr^N)$ is \emph{elliptic} if for each compact set
$K\subset Y\times X$, there is a $C_K>0$ and $M>0$ such that
\bel{def:elliptic} \abs{a(\vy,\vx,\vsig)}\geq C_K(1+\norm{\vsig})^m,\
\ \ (\vy,\vx)\in K,\ \norm{\vsig}\geq M.
\ee 
\end{definition}

\begin{definition}[{\cite[Definition
        21.2.15]{hormanderIII}}] \label{phasedef}
A function $\Phi=\Phi(\vy,\vx,\vsig)\in
\Ec(Y\times X\times\dot{\mathbb{R}^N})$ is a \emph{phase
function} if $\Phi(\vy,\vx,\lambda\vsig)=\lambda\Phi(\vy,\vx,\vsig)$, $\forall
\lambda>0$ and $\mathrm{d}\Phi$ is nowhere zero. The
\emph{critical set of $\Phi$} is
\[\Sigma_\Phi=\{(\vy,\vx,\vsig)\in Y\times X\times\dot{\mathbb{R}^N}
: \dd_{\vsig}\Phi=0\}.\] 
 A phase function is
\emph{clean} if the critical set $\Sigma_\Phi = \{ (\vy,\vx,\vsig) \ : \
\mathrm{d}_\vsig \Phi(\vy,\vx,\vsig) = 0 \}$ is a smooth manifold {with tangent space defined {by} the kernel of $\mathrm{d}\,(\mathrm{d}_\sigma\Phi)$ on $\Sigma_\Phi$. Here, the derivative $\mathrm{d}$ is applied component-wise to the vector-valued function $\mathrm{d}_\sigma\Phi$. So, $\mathrm{d}\,(\mathrm{d}_\sigma\Phi)$ is treated as a Jacobian matrix of dimensions $N\times (2n+N)$.}
\end{definition}
\noindent By the {Constant Rank Theorem} the requirement for a phase
function to be clean is satisfied if
$\mathrm{d}\paren{\mathrm{d}_\vsig
\Phi}$ has constant rank.

\begin{definition}[{\cite[Definition 21.2.15]{hormanderIII} and
      \cite[section 25.2]{hormander}}]\label{def:canon} Let $X$ and
$Y$ be open subsets of $\rn$. Let $\Phi\in \Ec\paren{Y \times X \times
{\rr}^N}$ be a clean phase function.  In addition, we assume that
$\Phi$ is \emph{nondegenerate} in the following sense:
\[\text{$\dd_{\vy}\Phi$ and $\dd_{\vx}\Phi$ are never zero on
$\Sigma_{\Phi}$.}\]
  The
\emph{canonical relation parametrized by $\Phi$} is defined as
\begin{equation}\label{def:Cgenl} \begin{aligned} \Cc=&\sparen{
\paren{\paren{\vy,\dd_{\vy}\Phi(\vy,\vx,\vsig)};\paren{\vx,-\dd_{\vx}\Phi(\vy,\vx,\vsig)}}:(\vy,\vx,\vsig)\in
\Sigma_{\Phi}},
\end{aligned}
\end{equation}
\end{definition}

\begin{definition}\label{FIOdef}
Let $X$ and $Y$ be open subsets of {$\rn$ and $\mathbb{R}^{n_Y}$, respectively.} {Let an operator $A :
\Dc(X)\to \mathcal{D}'(Y)$ be defined by the distribution kernel
$K_A\in \mathcal{D}'(Y\times X)$, in the sense that
$Af(\vy)=\int_{X}K_A(\vy,\vx)f(\vx)\mathrm{d}\vx$. Then we call $K_A$
the \emph{Schwartz kernel} of $A$}. A \emph{Fourier
integral operator (FIO)} of order $m + N/2 - (n_X+n_Y)/2$ is an operator
$A:\Dc(X)\to \mathcal{D}'(Y)$ with Schwartz kernel given by an
oscillatory integral of the form
\begin{equation} \label{oscint}
K_A(\vy,\vx)=\int_{\mathbb{R}^N}
e^{i\Phi(\vy,\vx,\vsig)}a(\vy,\vx,\vsig) \mathrm{d}\vsig,
\end{equation}
where $\Phi$ is a clean nondegenerate phase function and $a$ is a
symbol in $S^m(Y \times X , \mathbb{R}^N)$. The \emph{canonical
relation of $A$} is the canonical relation of $\Phi$ defined in
\eqref{def:Cgenl}. The FIO $A$ is \emph{elliptic} if its symbol is elliptic.
\end{definition}

This is a simplified version of the definition of FIOs in \cite[section
2.4]{duist} or \cite[section 25.2]{hormander} that is suitable when there are global coordinates and a global phase function. In general, an FIO must be defined using a partition of unity, local coordinates, and phase functions corresponding to local regions of the same, globally defined, canonical relation; for details see \cite[section
2.4]{duist} or \cite[section 25.2]{hormander}. Because we assume phase
functions are nondegenerate, our FIOs can be defined as maps from
$\Ec'(X)$ to $\Dc'(Y)$ and sometimes on larger domains. For general
information about FIOs, see \cite{duist, hormander, hormanderIII}. For
information about the Schwartz Kernel, see \cite[Theorem
5.1.9]{hormanderI}. 

Pseudodifferential operators are a special class of FIOs, which include linear differential operators, given in the next definition.

\begin{definition}\label{PsiDOdef}
An FIO is a pseudodifferential operator if its canonical relation $\Cc$ is contained in the diagonal
\[
\Cc=\Delta := \{ (\vx,\vxi;\vx,\vxi)\}.
\]
\end{definition}

{Let $X$ and $Y$ be
sets and let $\Omega_1\subset X$ and $\Omega_2\subset Y\times X$. The composition $\Omega_2\circ \Omega_1$ and transpose $\Omega_2^t$ of $\Omega_2$ are defined
\[\begin{aligned}\Omega_2\circ \Omega_1 &= \sparen{\vy\in Y\st \exists \vx\in \Omega_1,\
(\vy,\vx)\in \Omega_2}\\
\Omega_2^t &= \sparen{(\vx,\vy)\st (\vy,\vx)\in \Omega_2}.\end{aligned}\]}
The H\"ormander-Sato Lemma  provides the relationship between the
wavefront set of distributions and their images under FIO.

\begin{theorem}[{\cite[Theorem 8.2.13]{hormanderI}}]\label{thm:HS} Let $f\in \Ec'(X)$ and
let ${A}:\Ec'(X)\to \Dc'(Y)$ be an FIO with canonical relation $\Cc$.
Then, $\wf({A}f)\subset \Cc\circ \wf(f)$.\end{theorem}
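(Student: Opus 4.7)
The plan is to prove the lemma in two steps: first, establish a bound on the wavefront set of the Schwartz kernel $K_A$ in terms of the phase function $\Phi$; second, use the microlocal calculus for products and pushforwards to transfer this bound to $Af$ via $Af(\vy) = \int K_A(\vy,\vx) f(\vx)\, \dd\vx$.

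For the first step, I would show
\[
\WF(K_A) \subseteq \Lambda_\Phi := \{(\vy, \vx; \dd_\vy\Phi(\vy,\vx,\vsig), \dd_\vx\Phi(\vy,\vx,\vsig)) : (\vy,\vx,\vsig)\in \Sigma_\Phi\}.
\]
Fix $(\vy_0, \vx_0; \veta_0, \vxi_0) \notin \Lambda_\Phi$, pick a cutoff $\chi \in \Dc(Y\times X)$ supported in a small neighborhood of $(\vy_0, \vx_0)$, and study
\[
\Fc(\chi K_A)(\lambda\veta, \lambda\vxi) = \int e^{i\Psi_\lambda(\vy,\vx,\vsig)} \chi(\vy,\vx)\, a(\vy,\vx,\vsig)\, \dd\vy\, \dd\vx\, \dd\vsig,
\]
where $\Psi_\lambda := \Phi - \lambda(\vy\cdot\veta + \vx\cdot\vxi)$, for $(\veta, \vxi)$ in a small conic neighborhood of $(\veta_0, \vxi_0)$. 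The assumption $(\vy_0, \vx_0; \veta_0, \vxi_0) \notin \Lambda_\Phi$ means that on the support of $\chi$ and for $\lambda$ large the total phase has a nonvanishing gradient: either $\dd_\vsig\Phi \neq 0$, or $\dd_\vsig\Phi = 0$ while $(\dd_\vy\Phi, \dd_\vx\Phi) - \lambda(\veta, \vxi) \neq 0$. Cover $\vsig$-space with a conic partition of unity separating the regimes where each of these quantities dominates, construct on each region a first-order differential operator $L$ with $L e^{i\Psi_\lambda} = e^{i\Psi_\lambda}$ whose transpose preserves the symbol class, and integrate by parts $N$ times to extract the decay $(1+\lambda)^{-N}$.

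For the second step, observe that the canonical relation $\Cc$ from Definition \ref{def:canon} is the ``twist'' of $\Lambda_\Phi$ under $(\vy, \vx; \veta, \vxi) \mapsto (\vy, \veta; \vx, -\vxi)$. The nondegeneracy hypothesis in Definition \ref{def:canon} forces $\Lambda_\Phi$ to avoid both $\{\veta = 0\}$ and $\{\vxi = 0\}$, which is exactly the transversality condition needed so that the product $K_A(\vy,\vx) \cdot (1 \otimes f)(\vy,\vx)$ is well-defined as a distribution on $Y\times X$, and its pushforward onto $Y$ (integration in $\vx$) is well-defined whenever $f \in \Ec'(X)$. Applying the standard wavefront-set rules for products and pushforwards, together with the explicit description of $\Lambda_\Phi$ from the first step, yields $\WF(Af) \subseteq \Cc \circ \WF(f)$.

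The technical heart, and main obstacle, is the first step: since the $\vsig$-integral ranges over all of $\mathbb{R}^N$ rather than a compact set, ordinary stationary phase does not apply directly. One needs a conic decomposition of $\vsig$-space adapted simultaneously to $|\vsig|$, $|\dd_\vsig\Phi|$, and the ratio $|\vsig|/\lambda$, and the integration-by-parts operators must preserve the symbol class $S^m(Y\times X, \mathbb{R}^N)$ uniformly across these regions so that sufficiently many applications produce an absolutely convergent integral with the desired decay in $\lambda$. Once this is in place, the second step is largely bookkeeping with the standard microlocal composition rules.
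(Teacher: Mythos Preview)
The paper does not prove this theorem at all; it is simply quoted from H\"ormander's book \cite[Theorem 8.2.13]{hormanderI} as a standard background result, so there is no ``paper's own proof'' to compare against. Your sketch is essentially the standard argument one finds in the literature (bound $\WF(K_A)$ by stationary-phase/integration-by-parts on the oscillatory integral, then apply the product and pushforward wavefront calculus), and as an outline it is correct; the nondegeneracy assumption is precisely what makes the second step go through, as you note.
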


Let $A$ be an FIO with adjoint $A^*$. Then if $\Cc$ is the canonical relation of $A$, the canonical relation of $A^*$ is $\Cc^t$. Many imaging techniques are based on application of the adjoint operator $A^*$ and so to understand artifacts we consider $A^* A$ (or, if
$A$ does not map to $\Ec'(Y)$, then $A^* \psi A$ for an
appropriate cutoff $\psi$). Because of Theorem \ref{thm:HS},
\[
\wf(A^* \psi A f) \subset \Cc^t \circ \Cc \circ \wf(f).
\]
The next two definitions provide tools, which we will apply in the next section, to analyze this composition.

\begin{definition}
\label{defproj} Let $\Cc\subset T^*(Y\times X)$ be the canonical
relation associated to the FIO ${A}:\mathcal{E}'(X)\to
\mathcal{D}'(Y)$. We let $\Pi_L$ and $\Pi_R$ denote the natural left-
and right-projections of $\Cc$, projecting onto the appropriate
coordinates: $\Pi_L:\Cc\to T^*(Y)$ and $\Pi_R : \Cc\to T^*(X)$.
\end{definition}

Because $\Phi$ is nondegenerate, the projections do not map to the
zero section.  
%
%
If $A$ satisfies our next definition, then $A^* A$ (or $A^* \psi A$) is a pseudodifferential operator
\cite{GS1977, quinto}.

\begin{definition}\label{def:bolker} Let
${A}:\Ec'(X)\to \Dc'(Y)$ be a FIO with canonical relation $\Cc$ then
{$A$} (or $\Cc$) satisfies the \emph{Bolker Condition} if
the natural projection $\Pi_L:\Cc\to T^*(Y)$ is an embedding
(injective immersion).\end{definition}

\section{Ellipsoid and hyperboloid Radon transforms}\label{sect:theory} In
this section we show under fairly weak assumptions that a general
Radon transform integrating over ellipsoids, hyperboloids, or
elliptic hyperboloids with centers on a surface satisfies the Bolker
condition. Then, we investigate several special cases.



Let $\sym$ denote the set of invertible symmetric matrices with real
entries, which is an $n(n+1)/2$ dimensional smooth manifold, and
suppose $A \in \sym$. Let $S$ be a smooth connected hypersurface in
$\rn$. For $(\vs,A,t)\in S\times \sym\times \mathbb{R}=:Y$, let 
\begin{equation}\label{eq:PsiA-v2}
\Psi(\vs, A, t;\vx)= t - \vx_T^T A \vx_T \ \ \text{where}\ \ \vx_T =
\vx - \vs.
\end{equation} 
If $A$ is positive definite and $t>0$, then 
 $\Psi(\vs,A,t;\vx)=0$ is the defining equation of an
ellipsoid with center at  $\vs$. In other cases, $\Psi(\vs,A,t;\vx)=0$
can be a hyperboloid or elliptic hyperboloid. Note that if
$t=0$, the surface
$\Psi(\vs,A,0;\vx)=0$ is singular.  Therefore, we will exclude $t=0$
from our analysis.


Our Radon transform can be written
\begin{equation} \label{eq:RdefA}
\begin{split}
R f(\vs,A,t)&=\int_{\mathbb{R}^n}\abs{\nabla_{\vx}\Psi}\delta\paren{\Psi(\vs,
A,t;\vx)}f(\vx)\mathrm{d}\vx\\
&=\int_{-\infty}^{\infty}\int_{\mathbb{R}^n}
\abs{\nabla_{\vx}\Psi}f(\vx)e^{i\sigma
\Psi(\vs,A,t;\vx)}\mathrm{d}\vx\mathrm{d}\sigma
\end{split}
\end{equation}
for $f\in L^2_c(D)$, where $D$ is an open, connected subset of $\rn$. The most general case we will consider is when
$A$ is restricted to be in an embedded submanifold $M \subset \sym$.
Note that this includes the case when $M=\{A\}$ is a single matrix and
thus a zero dimensional submanifold. With this in mind, we define
\[Y_M = S \times M \times \dR\] and the operator $R_M$ is given
by \eqref{eq:RdefA} but with $A$ restricted to $M$. 

We now state our main theorem.

\begin{theorem}\label{thm:RA-general}  Let $S \subset \rn$ be a smooth
connected hypersurface. Let $D$ be an open connected subset of $\rn$,
and let $M$ be a submanifold of $\sym$, possibly of dimension zero.
Then, $R_M:\Ec'(D)\to \Dc'(Y_M)$ is an FIO satisfying the Bolker
condition if $D$ is disjoint from every tangent plane to
$S$.  That is, \bel{no tangents} D\bigcap \paren{\bigcup_{\vs\in S}
P_\vs} = \emptyset, \ee where $P_\vs$ is the tangent plane to $S$ at
$\vs\in S$. If additionally $\mathrm{dim}(M) = 0$, then the Bolker condition will fail if any tangent plane to $S$ intersects $D$.
\end{theorem}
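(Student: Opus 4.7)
The plan is to follow the standard three-step scheme for establishing the Bolker condition for Radon transform FIOs: verify that the phase $\Phi(\vs,A,t,\vx;\sigma) := \sigma \Psi(\vs,A,t;\vx)$ is nondegenerate and clean (so $R_M$ is indeed an FIO), compute its canonical relation $\Cc$ explicitly, and then show that the left projection $\Pi_L$ is an injective immersion exactly under the tangent-plane hypothesis. Verifying the phase conditions is routine: with $N=1$ we have $\Sigma_\Phi=\{\Psi=0\}$, and since $\partial_t \Psi = 1$ the differential $d(d_\sigma\Phi)=d\Psi$ has constant rank one, so $\Sigma_\Phi$ is a smooth manifold with the correct tangent space. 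Nondegeneracy follows from $\partial_t\Phi=\sigma\neq 0$ and from the observation that $d_\vx \Phi = -2\sigma A(\vx-\vs)$ vanishes only when $\vx = \vs$, which forces $\Psi=t\neq 0$. Differentiating $\Phi$ yields
\[
d_\vs \Phi = 2\sigma A(\vx-\vs),\quad d_t \Phi = \sigma,\quad d_A \Phi = -\sigma (\vx-\vs)(\vx-\vs)^T,\quad d_\vx \Phi = -2\sigma A(\vx-\vs),
\]
so $\Cc$ is parametrized by $(\vs,A,\vx,\sigma) \in S\times M\times D\times \dR$ with $t = (\vx-\vs)^T A(\vx-\vs)$, the $\vs$ and $A$ covector components being restricted to $T_\vs S$ and $T_A M$ respectively.

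For injectivity of $\Pi_L$, suppose $(\vs, A, t, \vx_0, \sigma_0)$ and $(\vs, A, t, \vx_1, \sigma_1)$ in $\Cc$ share the same left projection. Equality of the $d_t\Phi$ components forces $\sigma_0 = \sigma_1 =: \sigma$, and equality of the $d_\vs\Phi$ components as covectors on $T_\vs S$ forces $A(\vx_0-\vx_1)$ to be parallel to a unit normal $\mathbf{n}$ to $S$ at $\vs$, so $\vx_0 - \vx_1 = \lambda A^{-1}\mathbf{n}$ for some $\lambda\in\mathbb{R}$. The fact that both $\vx_0$ and $\vx_1$ satisfy $\Psi(\vs, A, t;\cdot)=0$ then forces $\lambda = 0$ (giving $\vx_0 = \vx_1$) or
\[
\lambda = -\frac{2(\vx_0-\vs)^T \mathbf{n}}{\mathbf{n}^T A^{-1}\mathbf{n}},
\]
from which a short computation yields $(\vx_1 - \vs)^T\mathbf{n} = -(\vx_0 - \vs)^T\mathbf{n}$. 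Thus $\vx_0$ and $\vx_1$ lie in opposite open half-spaces cut out by the tangent plane $P_\vs$. Because $D$ is open, connected, and disjoint from $P_\vs$, it lies entirely in one of these half-spaces, so $\vx_0\in D$ forces $\vx_1\notin D$, whence $\vx_0 = \vx_1$. The additional $d_A\Phi$ constraint (when $\dim M > 0$) only further restricts candidate preimages and does not affect this conclusion.

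For the immersion, use local coordinates $(\vu,\vb)$ on $S\times M$ and $(\vu,\vb,\vx,\sigma)$ on $\Cc$. The Jacobian of $\Pi_L$ has identity blocks on the $\vu$, $\vb$, and $\sigma$ variables, so its rank question reduces to whether the map $\vx \mapsto (t(\vx), d_\vu\Phi(\vx), d_\vb\Phi(\vx))$ has derivative of rank $n$. The rows $\partial_\vx t = 2(A(\vx-\vs))^T$ and $\partial_\vx d_\vu\Phi = 2\sigma (AJ_\vs)^T$ alone assemble into an $n\times n$ block equal, up to an invertible diagonal scaling, to the transpose of $A\,[\vx-\vs \mid J_\vs]$, where $J_\vs$ is the Jacobian of the local parametrization of $S$. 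This block is invertible precisely when $\vx-\vs$ is transverse to $T_\vs S$, i.e.\ exactly when $\vx\notin P_\vs$, which is the hypothesis. Hence $\Pi_L$ is an injective immersion and the forward direction is complete.

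For the converse, suppose $\dim M = 0$ and that some $P_{\vs_0}$ meets $D$. Pick $\vx_*\in D\cap P_{\vs_0}$ and an open neighborhood $V\subset D$ of $\vx_*$. The mirror map
\[
\vx_0 \mapsto \vx_0 - \frac{2(\vx_0 - \vs_0)^T \mathbf{n}_0}{\mathbf{n}_0^T A^{-1}\mathbf{n}_0}\, A^{-1}\mathbf{n}_0
\]
is continuous and fixes $\vx_*$, so for $\vx_0\in V$ sufficiently close to $\vx_*$ and off $P_{\vs_0}$, its image $\vx_1$ lies in $V\setminus\{\vx_0\}\subset D$. The pair $(\vs_0, A, t, \vx_0, \sigma)$ and $(\vs_0, A, t, \vx_1, \sigma)$ then has equal $\Pi_L$ images, so $\Pi_L$ is not injective and Bolker fails. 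The main obstacle throughout is keeping careful track of the restrictions to $T_\vs S$ and $T_A M$ in the covector components; the mirror-point identity handles the $T_\vs S$ part uniformly in $\dim M$, and the $\dim M = 0$ case (where the $T_A M$ restriction is vacuous) is simultaneously the hardest for injectivity and the natural setting for the converse.
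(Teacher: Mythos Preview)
Your proof is correct and follows the same overall architecture as the paper's---verify the phase is clean and nondegenerate, compute $\Cc$, then check immersion and injectivity of $\Pi_L$---but the execution differs in two noteworthy ways.

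First, you work coordinate-free throughout, using a unit normal $\mathbf{n}$ to $S$ and the parametrization Jacobian $J_\vs$, whereas the paper first reduces to the case where $S$ is the graph of a function $q:\Omega\subset\rnmo\to\rr$, carries out the matrix calculations explicitly in those coordinates (the key step being a row reduction showing $D\Pi_L^M$ drops rank exactly when $(\nabla q^T,-1)\cdot(\vx-\vs)=0$), and then patches back to a general $S$ via a rotation-and-translation argument together with a short discussion of why Bolker locally above $Y_M$ implies Bolker globally. Your approach is more streamlined; the paper's is more hands-on and makes the mirror-point structure completely explicit in coordinates. The paper also inserts an Intermediate Value Theorem argument to normalize so that $D$ lies \emph{above} every tangent plane to the graph; your version sidesteps this by noting, for each fixed $\vs$, that $D$ connected and disjoint from $P_\vs$ already forces $D$ into a single half-space, which is all the injectivity step needs.

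Second, for the converse when $\dim M=0$, the paper shows the \emph{immersion} condition fails: at any $\vx_*\in D\cap P_{\vs_0}$ the differential $D\Pi_L^M$ drops rank, which falls directly out of the same row reduction already done. You instead show \emph{injectivity} fails, by observing that the mirror map is continuous and fixes $\vx_*$, so nearby off-plane points in $D$ have distinct mirror partners also in $D$. Both are legitimate ways to violate Bolker; the paper's route is shorter here since the rank computation is already in hand, while yours yields slightly more geometric information (an explicit two-to-one locus near $\vx_*$). One small point worth a sentence in your version: the resulting pair must have $t=(\vx_0-\vs_0)^TA(\vx_0-\vs_0)\neq 0$ to lie in $Y_M=S\times M\times\dR$; this holds after a generic perturbation of $\vx_0$ within $D$, but should be said.
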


We should point out that Theorem \ref{thm:RA-general} will apply to
the Radon transform $R_M$ with any smooth weight, not just the weight
in \eqref{eq:RdefA}, since the proof uses only microlocal results and
the symbol of $R_M$ will still be smooth.

In the proof of Theorem \ref{thm:RA-general} and throughout the
article, we use the following notation: $\zero_{m\times n}$ is the
$m\times n$ zero matrix; and $I_m$ is the $m\times m $ identity
matrix. If $\vx= (x_1,x_2,\dots, x_{n-1},x_n)\in \rn$, then $\vx' =
(x_1,x_2,\dots, x_{n-1})$. 

\begin{proof}[Proof of Theorem \ref{thm:RA-general}]
Referring to the second line in \eqref{eq:RdefA}, $R_M$ will be an FIO provided that
\begin{equation}
\label{phiA}
\Phi(\vs,A,t;\vx;\sigma)=\sigma\paren{ t - \vx_T^T A\vx_T}
\end{equation}
is a nondegenerate phase function. This is true since $\frac{\partial
\Phi}{\partial t} = \sigma$ and $\nabla_\vx \Phi = -2\sigma
(A\vx_T)^T\neq \zero$ since $D$ is disjoint from $S$ and $A$ is
invertible.

Our proof is in two parts. First we consider the case when $S$ is the
graph of a smooth function, and then we use this result locally for
the general case. 

Indeed, let $\Omega$ be an open connected
subset of $\rnmo$ and let $q:\Omega\to\rr$ be a smooth function. Now,
let 
\[S =\sparen{(\vy,q(\vy))\st\vy\in \Omega}.\] 
To simplify notation when $\vy\in \Omega$ is fixed, we will let
\[q=q\paren{y_1,\ldots,y_{n-1}},\qquad q_j = \frac{\partial q}{\partial
y_j}\] and so for $\vx\in \mathbb{R}^n$, $\vy \in \Omega$
\[\vx_T = (x_1-y_1,x_2-y_2,\dots,x_{n-1}-y_{n-1},x_n-q)^T.\]
Note that, with this notation, $\vx$ is in the tangent plane
$P_{(\vy,q(\vy))}$ if and only if
\begin{equation}\label{eq:tangentplane}
(\nabla q^T, -1) \cdot \vx_T = (q_1,\ q_2,\ \dots \ ,\ q_{n-1},\ -1) \cdot \vx_T = 0.
\end{equation}


When $\mathrm{dim}(M)>0$, calculation using \eqref{phiA} shows that the canonical relation for $R_M$ is 
\begin{equation}\label{def:CM}
\begin{aligned}
\Cc_M &=
\big\{\paren{(\vy,q),A, \vx_T^T A \vx_T; \nabla_\vy
\Phi \cdot \dd \vy + \nabla_A \Phi \cdot \dd A+ \sigma \dd t; \vx;- \nabla_\vx
\Phi\cdot \dd \vx}\\
&\hskip6cm \st (\vy,A;\vx;\sigma)\in \Omega\times M\times D\times
\dR\big\}
\end{aligned}
\end{equation}
The only difference when $\mathrm{dim}(M)=0$ is that the $\dd A$ term
is removed. For the remainder of the proof we assume
$\mathrm{dim}(M)>0$ but minor modifications allow the same
arguments to work for the case $\mathrm{dim}(M)=0$.

Note that $(\vy,A;\vx;\sigma)\in \Omega\times M \times D\times \dR$
provide a global parametrization of $\Cc_M$ since $t$ is determined by
$\vy$, $q(\vy)$, $\vx$ and $A$. The left projection of $R_M$ is
\begin{equation}\label{PiA}
\begin{split}
\Pi^{M}_L(\vy,A;\vx;\sigma)&=\Bigg(
(\vy,q),A,
\vx_T^T A \vx;  2\sigma \vx_T^TA B^T \mathrm{d}\vy 
-\sigma\vx_T^T \mathrm{d}A \vx_T + \sigma \mathrm{d}t\Bigg),
\end{split}
\end{equation}
where 
\[B = \bparen{I_{n-1},\nabla
q}=\begin{pmatrix}1&0&\cdots&0&q_1\\
0&1&\cdots&0&q_2\\
\vdots&\vdots&\vdots&\vdots&\vdots\\
0&0&\cdots&1&q_{n-1}\end{pmatrix}.
\]
Using the natural coordinates on $T^* (\Omega \times M \times \dR)$, the differential of $\Pi^M_L$ is represented by
\begin{equation}\label{DPiA}
D\Pi^{M}_L=\kbordermatrix {&\nabla_{\vy},\nabla_{A},
\frac{\partial}{\partial \sigma}& \nabla_\vx \\
\vy,A, \dd t& I_{(2n-1)} & \textbf{0}_{(2n-1) \times n} \\
\dd \vy & \cdot & 2\sigma BA\\
\dd A & \cdot & \cdot \\ 
{t} & \cdot & 2\vx_T^TA}.
\end{equation}
Thus, $\Pi^M_L$ will be an immersion if the $n\times n$ submatrix
\begin{equation}\label{nablaxA}
C= 
\begin{pmatrix}2\sigma B A\\
 2\vx_T^TA \end{pmatrix}
 =
 \begin{pmatrix}2\sigma B\\
 2\vx_T^T \end{pmatrix} A
\end{equation} 
is invertible. Note that $CA^{-1}$ is row equivalent to 
\begin{equation}\label{reducedA}
(CA^{-1})'= 
\begin{pmatrix}
I_{n-1}&\nabla q\\
\zero_{1\times (n-1)}&(\nabla q^T,  -1)\cdot \vx_T
  \end{pmatrix}.  
\end{equation}
Therefore, by \eqref{eq:tangentplane} $D\Pi^M_L$ is injective if $\vx$
is not in the tangent plane to $S$ at $(\vy,q)$. If $\mathrm{dim}(M) = 0$, then the rows of \eqref{DPiA} corresponding to $\dd A$ are absent and so $D\Pi^M_L$ is injective if and only if $\vx$ is not in the tangent plane to $S$ at $(\vy,q)$. This proves the final statement of the theorem about the Bolker condition failing. On the other hand, if no tangent plane to $S$ intersects $D$ then this shows that $D \Pi^M_L$ is always injective which proves that $\Pi^M_L$
is an immersion in that case.

Now we will prove the injectivity part of the Bolker condition when
$S$ is a graph assuming that \eqref{no tangents} holds. Without loss
of generality, we assume $D$ is above every tangent plane to $S$,
i.e., if $\vs\in S$, $\vx\in D$, and $(\vx',t)\in P_\vs$, then $x_n
>t$. To see this is possible, one assumes some tangent plane
$P_{\vs_1}$ is below $D$ and another, $P_{\vs_2}$ is above. Then, one
uses the following  Intermediate Value Theorem argument to show some tangent plane
intersects $D$. Let $\vx\in D$ and let $\ell = \sparen{(\vx',w)\st
w\in \rr}$ be the vertical line containing $\vx$. The point of
intersection $P_{\vs_1}\cap \ell$ is below $D\cap\ell$ and
$P_{\vs_2}\cap \ell$ is above. Therefore, there is an $\vs\in S$ where
$P_\vs\cap \ell$ is in $D$ since both $S$ and $D$ are connected and
the map from $S$ to the point of intersection of $\ell$ and $ P_\vs$
is continuous. This assumption implies that 
\begin{equation}\label{eq:locconv}
(-q_1, -q_2, \dots,-q_{n-1}, 1) \cdot \vx_T >0
\end{equation}
whenever $\vx \in D$ and $\vy \in \Omega$. 

Seeking to establish injectivity, let us suppose that $\vu,\vv\in D$, $A \in M$ and $\sigma \in \dR$ are such that
\bel{equal PiL}
\Pi^{M}_L(\vy,A;\vu;\sigma)=\Pi^{M}_L(\vy,A;\vv;\sigma).\ee Then,
using \eqref{PiA}, we see
\begin{equation}\label{eq:uvEqA}
BA \vu_T=B A \vv_T.
\end{equation}
Note that $\text{Null}\paren{BA}=\text{span}\paren{A^{-1}(-\nabla
q^T,1)^T}$ and because of \eqref{eq:uvEqA}
\begin{equation}\label{eq:uvEq2A}
\vu_T = \vv_T + sA^{-1} (-\nabla q^T,1)^T.
\end{equation}
for some $s \in \mathbb{R}$. On the other hand, by setting the $t$
components in \eqref{PiA} equal we have
\begin{equation}\label{eq:uvEq3A}
\vu^T_T A \vu_T = \vv^T_T A \vv_T
\end{equation}
By taking the inner product of \eqref{eq:uvEq2A}  with $A\vu_T$
and using \eqref{eq:uvEq3A}, we see that 
\bel{eq:uvEq4A} -s\vv_T\cdot (-\nabla q^T,1)^T =s \vu_T\cdot(-\nabla
q^T,1)^T.\ee Therefore, either $s=0$ and $\vu = \vv$, or $\vv$ and $\vu$ are
on opposite sides of the tangent plane $P_{(\vy,q)} $ to $S$ at $(\vy,q)$. 
This second case is not allowed since $D$ is above every tangent plane
to $S$.

Now we consider the general case when $S$ is a smooth connected
hypersurface, not necessarily a graph, and $D$ an open connected set.

For the last statement of the theorem concerning when the Bolker
condition fails, if there is a point in $D$ which is in a tangent
plane $P_{\vs_0}$ to $S$, then after translating and rotating $S$ will
be represented by a graph in a neighborhood of $\vs_0$ and the
 argument after \eqref{reducedA} shows the immersion part of the Bolker condition
will fail if $\mathrm{dim}(M) = 0$.

With this case handled, we now assume that no tangent plane to $S$ intersects $D$ and suppose $\vso\in S$. We will show that $\Pi_L^M$ is an injective immersion locally above $S$
near $\vso$ (i.e., on the canonical relation $\Cc_M$ and above points
$(\vs,A,t)\in Y_M$ for $\vs$ in a neighborhood in $S$ of $\vso$). We
do this by reducing the problem to the case we just considered, when
the hypersurface $S$ is a graph.

This will show $R_M:\Ec'(D)\to
\Dc'(Y_M)$ satisfies the Bolker assumption globally for the following
reasons. First, being an immersion is a local condition. To check
injectivity, note that if
$\Pi_L^M(\nu_0)=(\vso,A_0,t_0;\eta_0)=\Pi_L^M(\nu_1)$ then, since the
basepoints of the image are the same, to show $\nu_0=\nu_1$, one just
needs to know $\Pi_L^M$ is injective on
$\paren{\Pi_L^M}\inv\sparen{(\vso,A_0,t_0;\eta_0)}$.
\medskip

Using a translation $T$ of $\rn$ followed by a rotation $R$ we map
$\vso$ into $\zero\in \rn$ and $S$ into a connected submanifold $S'$
such that the hyperplane $P_\zero=\sparen{\vx\in \rn\st x_n = 0}$ is
tangent to $S'$ at $\vs = \zero$. We let $D'$ be the image of $D$
under this rigid motion, $RT$. Without loss of generality, we assume
that $D'$ is above $P_\zero$. The rotation $R$ also conjugates $M$ to another
embedded manifold in $\sym$, which we denote by $M'$.

 Let $\Omega$ be an open connected neighborhood of $\zero\in \rnmo$ that
is so small that there is a smooth function $q:\Omega\to [0,\infty)$
such that \[\Omega\ni \vy \mapsto (\vy,q(\vy))\] give local
coordinates on $S'$ near $\zero$. Let \[S'_0 = \sparen{(\vy,q(\vy))\st
\vy\in \Omega}.\]  Since $D'$ is above $P_\zero$, it is above every
tangent plane to $S'_0$ by the Intermediate Value Theorem
argument given earlier.


Let $Y'_0 = \sparen{(\vy,q(\vy)),A,t)\st \vy\in \Omega, A\in M', t\in
\dR}$. Since $D'$ is above every tangent plane to $S'_0$, the first
part of this proof implies that $R_{M'}:\Ec'(D')\to \Dc'(Y'_0)$
satisfies the Bolker condition. Let $S_0$ be the image of $S'_0$ under
$(RT)\inv$ and let $Y_0 = S_0\times M\times \dR$. This proof implies
that $R_M:\Ec'(D)\to \Dc'(Y_0)$ satisfies the Bolker condition. Since
the Bolker condition is local above $Y_M$ and these coordinate patches
cover $S$, the Bolker condition holds for $R_M:\Ec'(D)\to \Dc'(Y_M)$.
This finishes the proof.\end{proof}

\subsection{Visible singularities and artifacts}\label{sect:visible} 

 The normal operator for $R_M$ is $\Nc = R_M^*\varphi R_M $ where
$\varphi$ is a cutoff on $Y_M$ that guarantees one can compose these
operators (sometimes $R_M^*$ is replaced by a weighted dual operator,
and if $R_M$ is a proper map, the cutoff $\varphi$ is not needed). 

\emph{Visible singularities of $f$} are those that are singularities
of $\Nc f$, i.e., singularities in $\wf(\Nc f)\cap \wf(f)$. Other
singularities of $f$ are called \emph{invisible singularities}.
\emph{Artifacts} are singularities of $\Nc f$ that are not in $f$,
i.e., singularities in $\wf(\Nc f)\setminus \wf(f)$.

To understand visible singularities, note that the set of visible
singularities of $f$ is contained in the image of $\Pi^M_R(\Cc_M)$.
This is true because \bel{HS-Nf}\wf(\Nc f)\subset \Cc_M^t\circ
\Cc_M\circ\wf(f)=\Pi_R^M\circ \paren{\Pi_L^M}\inv \circ \Pi_L^M\circ
\paren{\Pi_R^M}\inv(\wf(f)),\ee by the H\"ormander-Sato Lemma
\cite[Theorem 8.2.13]{hormanderI} and so the only singularities that
will come from this composition will be those in $\Pi_M^R(\Cc_M)$.

Now, we consider visible singularities for the spherical transform, so
$M=\sparen{I_n}$. We claim, there will be more  visible
singularities the more $S$ ``wraps around'' the scanning region. To
see this, one first observes that a singularity $(\vx,\xi)\in \wf(f)$
is detected by spherical integrals  only if $\{\vx+\alpha \xi \st
\alpha \in\mathbb{R}\}\cap S\neq \emptyset$. This follows by a
calculation of $\Cc_M$ for this case (either use the expression for
$\Cc_M$ in \eqref{def:CM} for the spherical case and then find the
image of $\Pi_R^M(\Cc_M)$ or see e.g., the discussion of visible
singularities around (4.6) and (4.13) in \cite{FrQu2015}). Therefore,
if $S$ is as in Figure \ref{F1b}, then $R_M f$ detects all
singularities in $D$ since $S$ surrounds $D$. If $S$ is as in Figure
\ref{F1c}, then singularities in a horizontal direction at points
above $S$ are invisible since no  horizontal line through such points
intersects $S$



%

%

We can also use the proof of Theorem \ref{thm:RA-general} to understand how
one can get artifacts in reconstructions using $\Nc$ if $S$ is not
globally convex or $D$ is not on the convex side of $S$. For
simplicity, we consider the case when $S$ is the graph of a function,
$q:\Omega\to\rr$ where $\Omega $ is an open subset of $\rnmo$.

The injectivity part of the Bolker condition fails if points below and
above $S$ are in the domain $D$ or if $S$ is not globally convex. Let
$(\vy,A,t)\in Y_M$ and let \[ E=E(\vy,A,t)=\sparen{\vx\st
\vx_T^TA\vx_T=t}.\] Then $E$ is the manifold of integration for $R_M$
at $ (\vy,A,t)$. 

Let $\vx\in E$ and assume $\xi$ is conormal to $E$ at $\vx$. Then
$\tau = (\vx,\xi)\in \Pi_R^M(\Cc_M)$ by \eqref{def:CM} and
$\nu=(\vy,A,t,\eta) \in 
\Cc_M\circ\sparen{\tau}$ for some $\eta\in T^*_{(\vy,A,t)}(Y_M)$.

By the injectivity calculation for $\Pi_L^M$, there is a second preimage
of $\nu$ and its basepoint is calculated using equations
\eqref{eq:uvEq2A}, \eqref{eq:uvEq3A}, and \eqref{eq:uvEq4A}. This
``mirror point'' $\vx_m$ to $\vx$ is the other point on $E$ and the line
through $\vx$ parallel to $A\inv(-\nabla q^T,1)^T$. 
This mirror point, $\vx_m$ is on
the other side of the tangent plane to $S$ at $(\vy,q)$ by
\eqref{eq:uvEq4A}.

Note that $\vx_m$ is the basepoint of a second preimage, $\tau' =
(\vx_m,\xi')$, of $\nu$ under composition with $\Cc_M$, i.e., $\nu\in
\Cc_M\circ \sparen{\tau}$ and $\nu\in
\Cc_M\circ \sparen{\tau'}$ by \eqref{equal PiL}.

If $\vx$ is on the tangent plane to $S$ at $(\vy,q)$, then $\vx_m=\vx$
(see \eqref{eq:uvEq4A}),and $\Pi_L^M$ is not an immersion at
$(\nu,\tau)$, and the immersion assumption of Bolker breaks down here.

This explains why artifacts can occur if $S$ is not globally convex or
if $D$ is on both sides of $S$. Given any point $\vx \in E(\vy,A,t)$,
there is a mirror point $\vx_m$ on the other side of the tangent plane
to $S$ at $(\vy,q)$ such that both covectors introduced in this
section, $\tau$ and $\tau'$ are in $\Cc_M^t\circ\sparen{\nu}$.
Therefore, if $\tau\in \wf(f)$, by \eqref{HS-Nf}, $\tau'$ can be an
added singularity in $\wf(\Nc f)$, and there can be an artifact at
$\vx_m$.

%

\subsection{Examples}\label{sect:examples}
 In this section, we apply Theorem \ref{thm:RA-general} to several
 interesting special cases.

\begin{corollary} \label{cor:RA-convex} Let $C$ be an open convex set with a smooth
boundary, $S$, and let $M$ be a submanifold of $\sym$, possibly of
dimension zero. Then, $R_M:\Ec'(C)\to \Dc'(Y_M)$ is an FIO satisfying
the Bolker condition.\end{corollary}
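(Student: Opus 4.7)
The plan is to reduce the statement directly to Theorem \ref{thm:RA-general} by verifying its hypothesis with $D = C$. That is, I will show that the open convex set $C$ is disjoint from every tangent plane to its smooth boundary $S$, at which point the FIO property and the Bolker condition follow immediately from the main theorem. First, $C$ is open by assumption and connected since any convex set is path-connected, so it is a valid choice for the domain $D$ in Theorem \ref{thm:RA-general}. The boundary $S$ of an open convex set with nonempty interior in $\rn$ is a connected hypersurface, which together with the smoothness assumption meets the standing hypotheses of the theorem.

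The key step is the disjointness from tangent planes. Fix $\vs\in S$; I will argue that $C\cap P_\vs = \emptyset$, where $P_\vs$ is the tangent plane to $S$ at $\vs$. The supporting hyperplane theorem for convex sets guarantees that at every boundary point of $C$ there is at least one supporting hyperplane, and the smoothness of $S$ at $\vs$ forces this supporting hyperplane to be unique and to coincide with $P_\vs$. Consequently, $\overline{C}$ lies entirely in one of the two closed half-spaces determined by $P_\vs$. Since $C$ is open and $\vs\in \partial C\subset P_\vs$, the set $C$ cannot meet $P_\vs$ (an interior point of $C$ lying on $P_\vs$ would violate the supporting hyperplane property because a neighborhood of that point would contain points on both sides of $P_\vs$). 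This gives
\[
C \bigcap \paren{\bigcup_{\vs \in S} P_\vs} = \emptyset,
\]
which is precisely condition \eqref{no tangents} in Theorem \ref{thm:RA-general}.

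With the hypothesis verified, Theorem \ref{thm:RA-general} applied with $D = C$ yields that $R_M : \Ec'(C) \to \Dc'(Y_M)$ is an FIO satisfying the Bolker condition, proving the corollary. The main (and really only) obstacle is the uniqueness of the supporting hyperplane at a smooth boundary point, and this is a standard consequence of convex analysis that I would only need to cite rather than reprove. No computation with the canonical relation $\Cc_M$ is required, because all of that work has already been done in the proof of Theorem \ref{thm:RA-general}.
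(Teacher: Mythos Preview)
Your proposal is correct and follows exactly the same approach as the paper, which simply notes that condition~\eqref{no tangents} holds because $C$ is convex and $S$ is smooth. You have merely supplied the standard convex-analysis details (supporting hyperplane theorem, uniqueness at smooth points) that the paper leaves implicit.
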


The corollary follows because condition \eqref{no tangents} in Theorem
\ref{thm:RA-general} holds as $C$ is convex and $S$ is smooth.

\begin{example}[$S$ with gradient zero along an axis]\label{ex:cylinder}
In this example, we consider measurement surfaces $S$ which are flat
along one directional axis, and the special case when the
integral surfaces are ellipsoids of revolution (or spheroids).
Without loss of generality, in this example, $S$ is assumed to be
flat in the $x_{n-1}$ direction. In Ultrasound reflection tomography
(URT), the integration surfaces are spheroids, and the foci of the
spheroids represent the sound wave emitter/receiver positions
\cite{ABKQ2013}. If we were to construct a measurement surface in
URT, which is flat in the  $x_{n-1}$ direction, such that there is an emitter/receiver at every point on $S$ (i.e., we have an $(n-1)$-D surface of emitters), then the URT data can be modeled by $Rf$, where $f$, in URT, denotes the acoustic reflectivity function. Specifically, we set $A=\text{diag}(1,\ldots,1,r_{n-1},1)$ in equation \eqref{eq:PsiA-v2}, with {$r_{n-1}\in(0,1]$}, and constrain $S$ to have gradient zero in the $x_{n-1}$ direction. Then the defining function, $\Psi$ (see section \ref{sect:theory}, equation \eqref{eq:PsiA-v2}),  describes a spheroid surface with foci on $S$. Thus, Theorem \ref{thm:RA-general} has direct applications to measurement surfaces in URT. 
\begin{figure}[!h]
\centering
\begin{subfigure}{0.32\textwidth}
\includegraphics[width=0.9\linewidth, height=4cm, keepaspectratio]{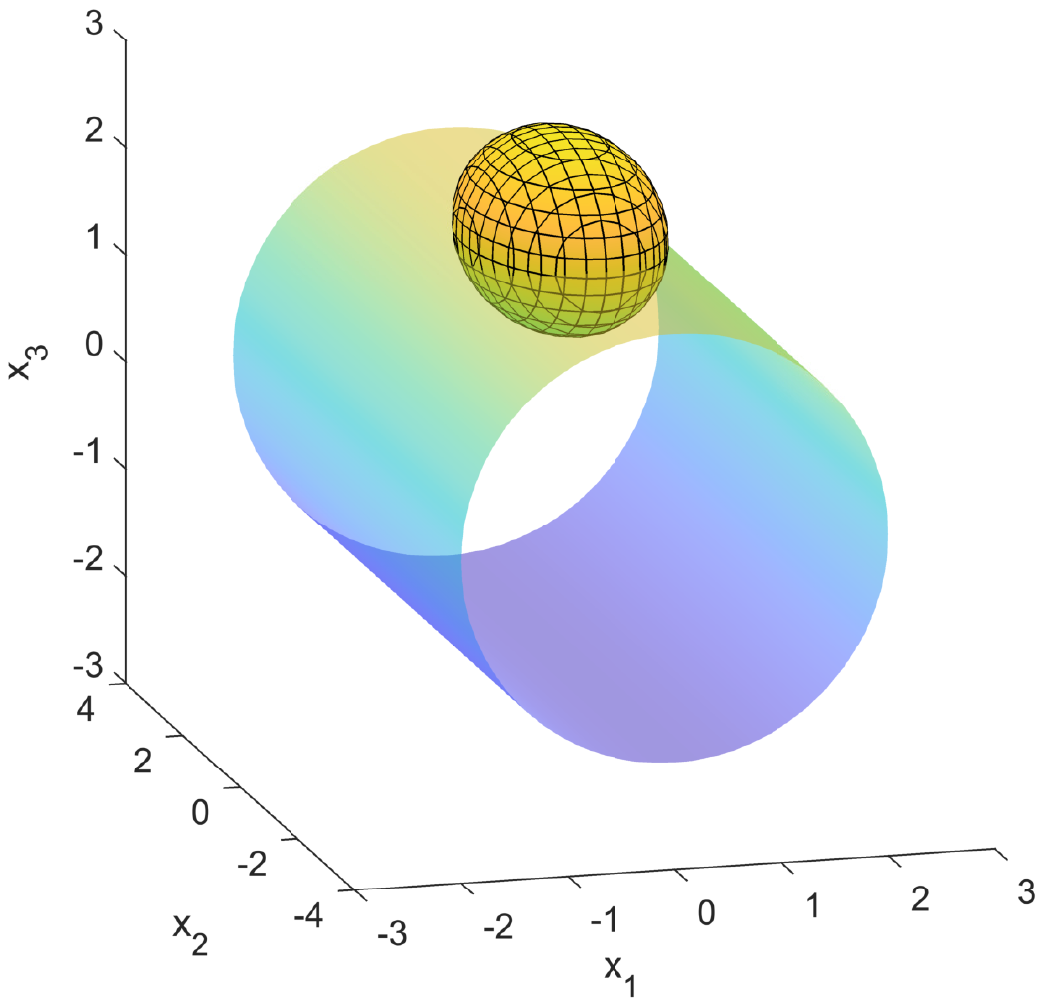}
\subcaption{cylindrical $S$} \label{F2a}
\end{subfigure}
\begin{subfigure}{0.32\textwidth}
\includegraphics[width=0.9\linewidth, height=4cm, keepaspectratio]{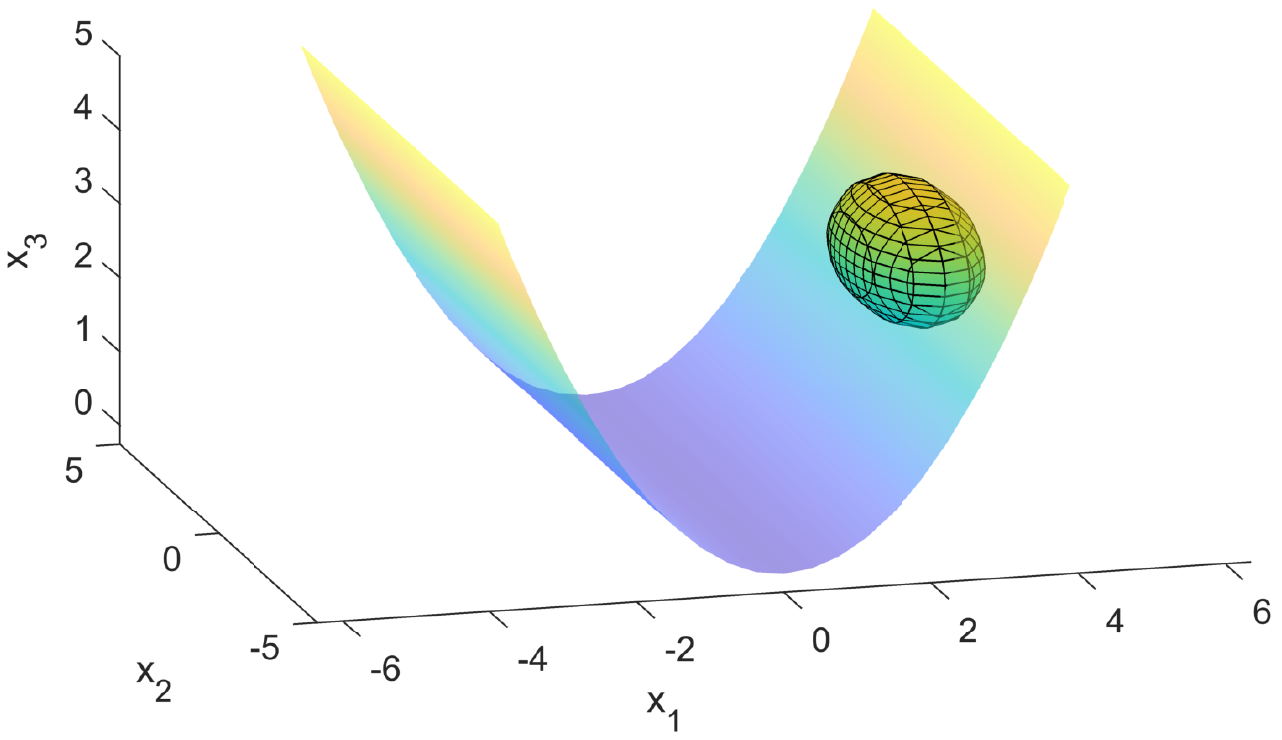} 
\subcaption{parabolic $S$.} \label{F2b}
\end{subfigure}
\caption{Example $S$ in $\mathbb{R}^3$ with practical application to URT. The surfaces above are flat (gradient zero) in the $x_2$ direction and are globally convex.}
\label{F2}
\end{figure}
\end{example}

In Figure \ref{F2}, we have illustrated some example $S \subset \mathbb{R}^3$, which are flat in the $x_2$ direction, and for which the Bolker condition holds. 
In Figure \ref{F2a}, the function support lies in the cylinder interior, and in Figure \ref{F2b}, the function support is assumed to be contained in $\{x_3>x_1^2\}$. 

\begin{example}[Centers on a hyperplane]\label{ex:hyperplane}
Integral transforms over spheres or ellipsoids centered on a plane have been
studied for application to radar \cite{SU:SAR2013,Caday:SAR,krishnan2012microlocal,CokerTewfik}, sonar \cite{andersson1988determination, klein2003inverting}, seismic \cite{grathwohl2020imaging}, and ultrasound imaging \cite{ABKQ2013,gouia2012approximate}.
Theorem \ref{thm:RA-general} holds if $S$ is a hyperplane and $D$ is on one side of
$S$. Note that this does not cover the common offset geometry, where
integrals are being taken over ellipsoids centered on a plane and with
foci a fixed distance apart, or common midpoint, where integrals are
being taken over ellipsoids with foci symmetric about a line cases
considered in \cite{FKNQ:CM-CO-seismics} where integrals are being
taken over ellipsoids with foci a fixed distance apart and oriented 
for each $\vy\in S$, since the foci of
our ellipses change with $t$. 


\begin{figure}[!h]
\centering
\begin{subfigure}{0.32\textwidth}
\includegraphics[width=0.9\linewidth, height=4cm, keepaspectratio]{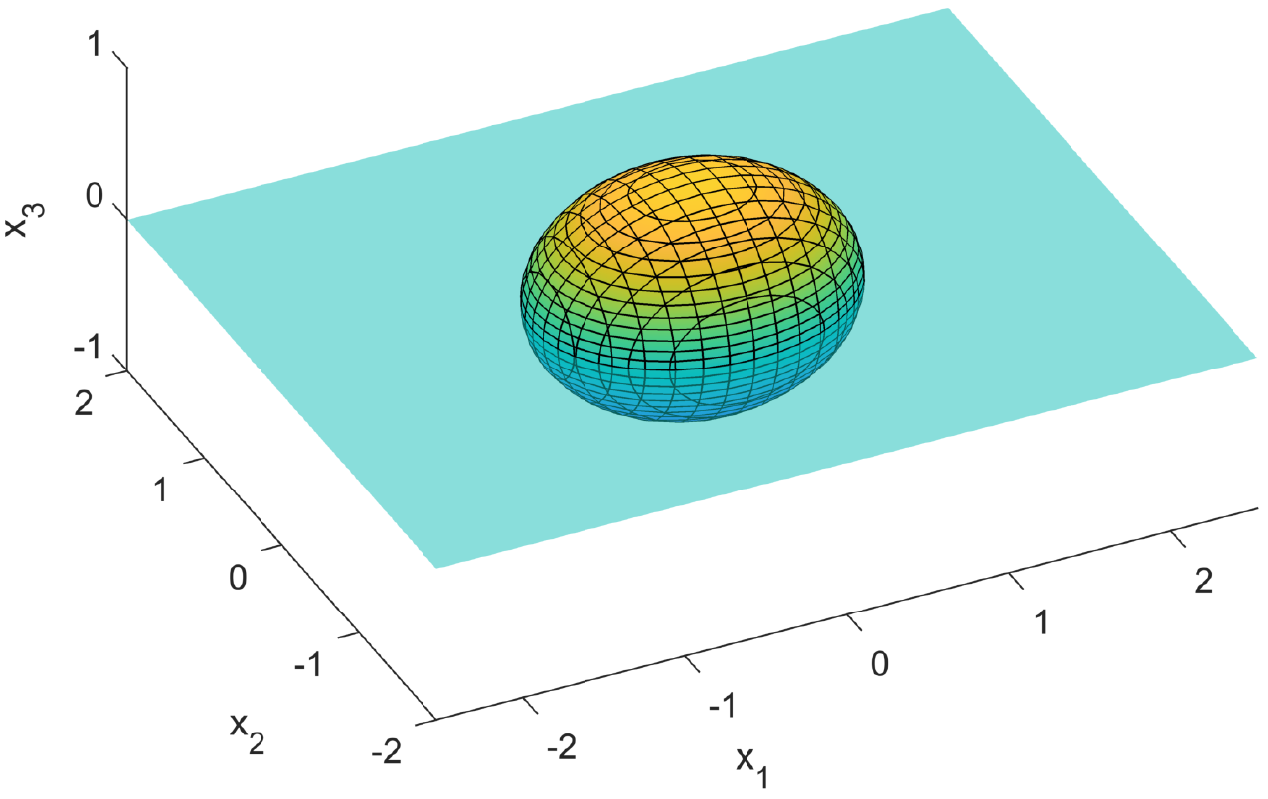}
\subcaption{ellipsoid centered on flat plane} \label{F3a}
\end{subfigure}
\hspace{1cm}
\begin{subfigure}{0.32\textwidth}
\includegraphics[width=0.9\linewidth, height=4cm, keepaspectratio]{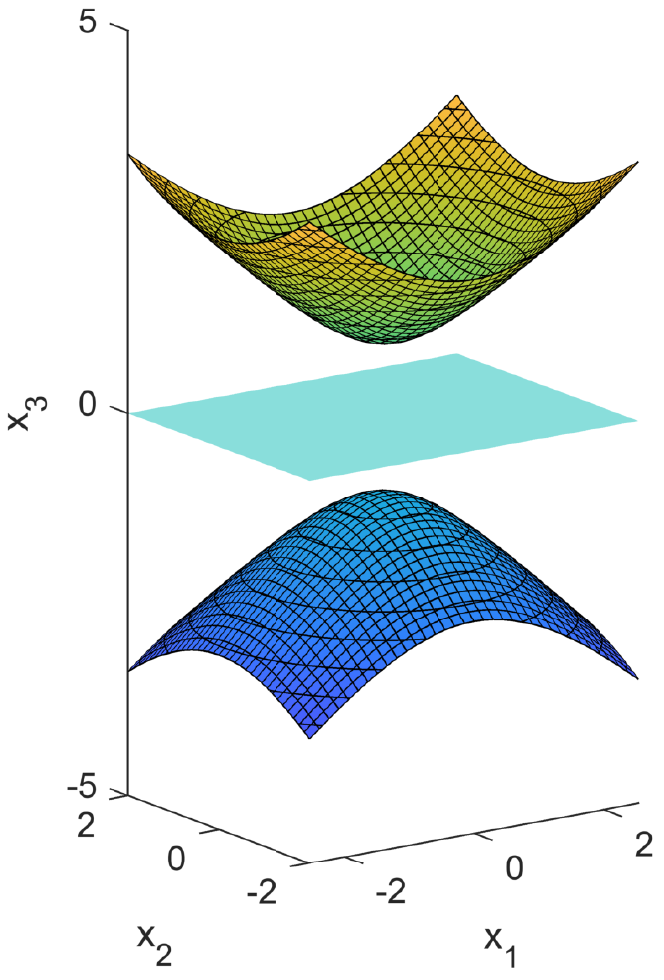} 
\subcaption{two-sheeted hyperboloid centered on flat plane} \label{F3b}
\end{subfigure}
\caption{Flat plane measurement surface. Left - ellipsoid integral surface. Right - two-sheeted hyperboloid surface.} \label{F3}
\end{figure}

\end{example}

\begin{example}[Centers on a spheroid, exponential, and sinusoid surface]\label{ex:other}
In this example, we discuss additional example measurement surfaces
in cases when the Bolker condition is satisfied, and others when
Bolker is not satisfied. Specifically, we consider the spheroid and exponential surfaces
illustrated in Figures \ref{F1a} and \ref{F1b}. In Figure \ref{F1a}, the function support is assumed to be contained within the spheroid interior, and in Figure \ref{F1b}, $S=\{\vx\in\mathbb{R}^3 : x_3-e^{x_1^2+x_2^2}=0\}$, and $\text{supp}(f)\subset \{x_3>e^{x_1^2+x_2^2}\}$. In both cases, no plane tangent to $S$ intersects $\text{supp}(f)$. Therefore, Theorem
\ref{thm:RA-general} holds. 

In Figure \ref{F1c}, we give an example ``sinusoidal" measurement
surface, defined by  $S=\{\vx\in\mathbb{R}^3 : x_3-\sin x_1 +\sin
x_2=0\}$, with $\text{supp}(f) \subset \{\vx\in\mathbb{R}^3 : x_3-\sin x_1 +\sin
x_2 > 0\}$. In this
case, there exist planes tangent to $S$ which intersect $\text{supp}(f)$, and thus, if $\text{dim}(M) = 0$, the Bolker condition is not satisfied by Theorem
\ref{thm:RA-general}, and we would expect to see mirror-point type artifacts through planes tangent to $S$, as described in section \ref{sect:visible}.
\begin{figure}[!h]
\centering
\begin{subfigure}{0.32\textwidth}
\includegraphics[width=0.9\linewidth, height=4cm, keepaspectratio]{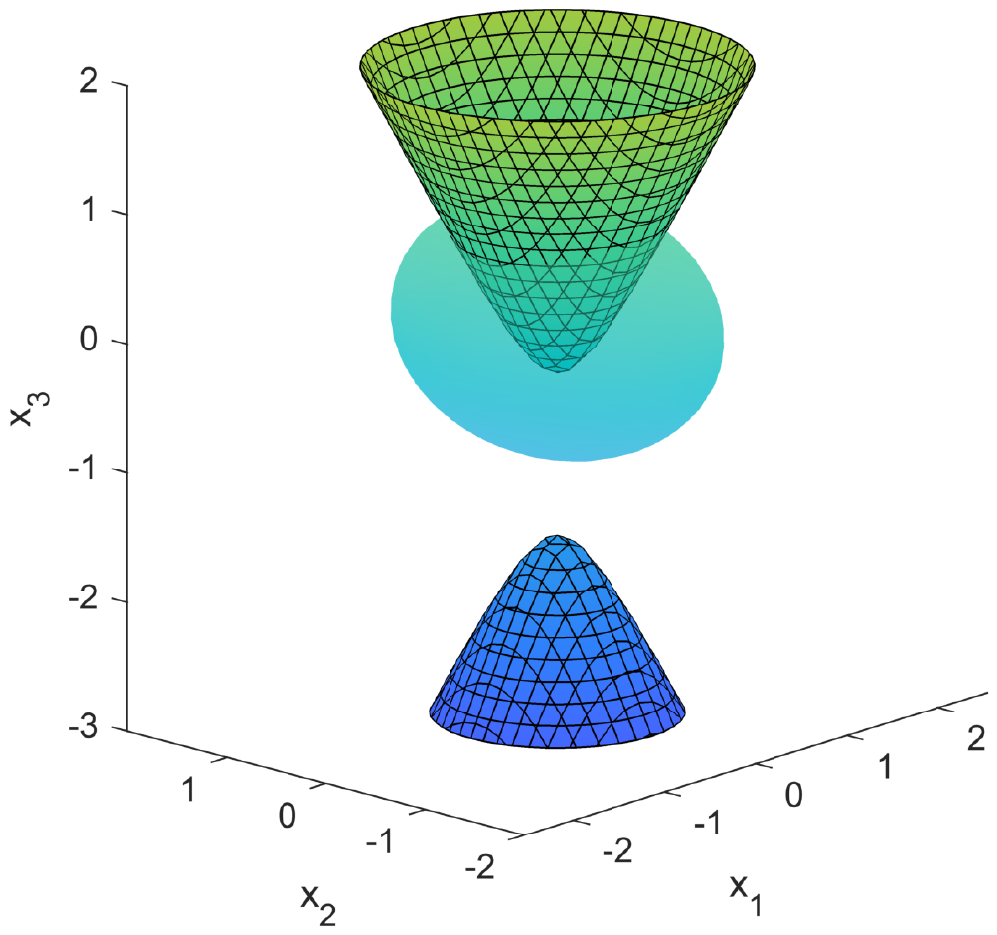}
\subcaption{spheroid} \label{F1a}
\end{subfigure}
\begin{subfigure}{0.32\textwidth}
\includegraphics[width=0.9\linewidth, height=4cm, keepaspectratio]{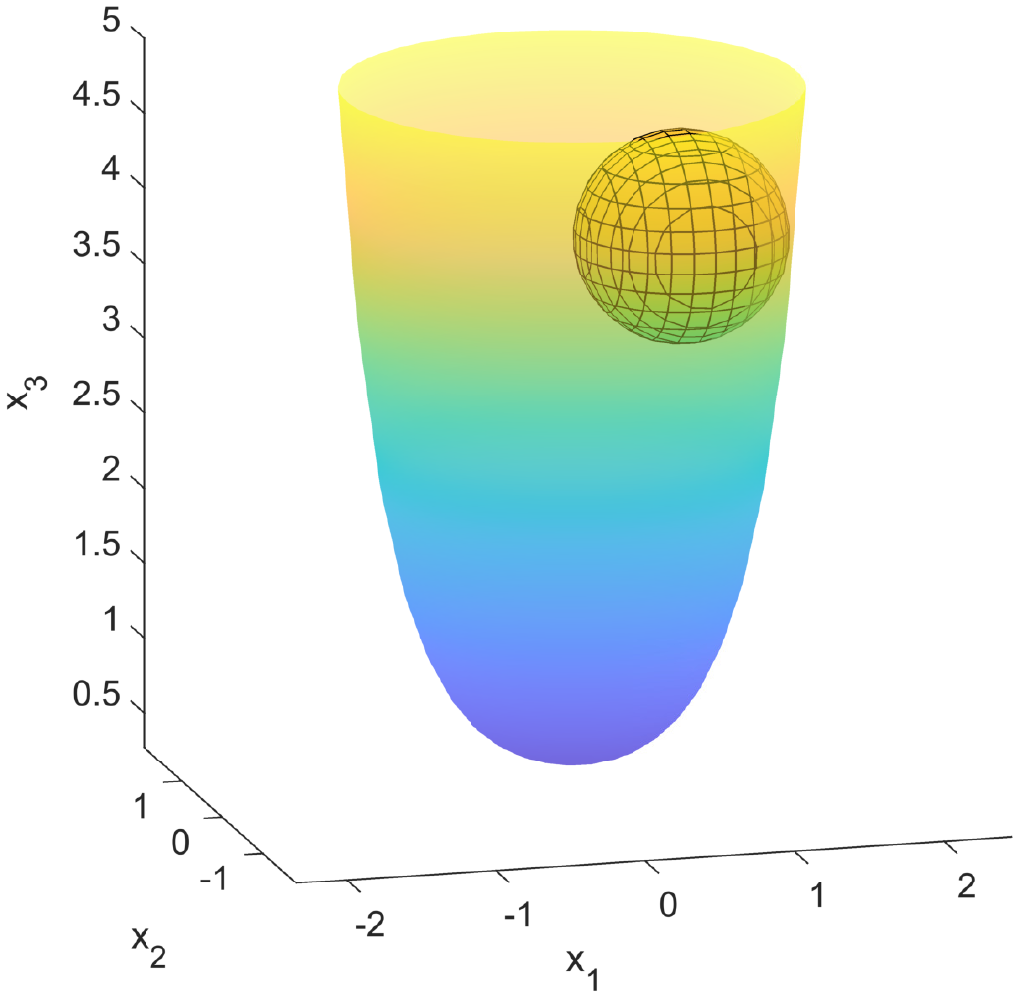} 
\subcaption{exponential} \label{F1b}
\end{subfigure}
\begin{subfigure}{0.32\textwidth}
\includegraphics[width=0.9\linewidth, height=4cm, keepaspectratio]{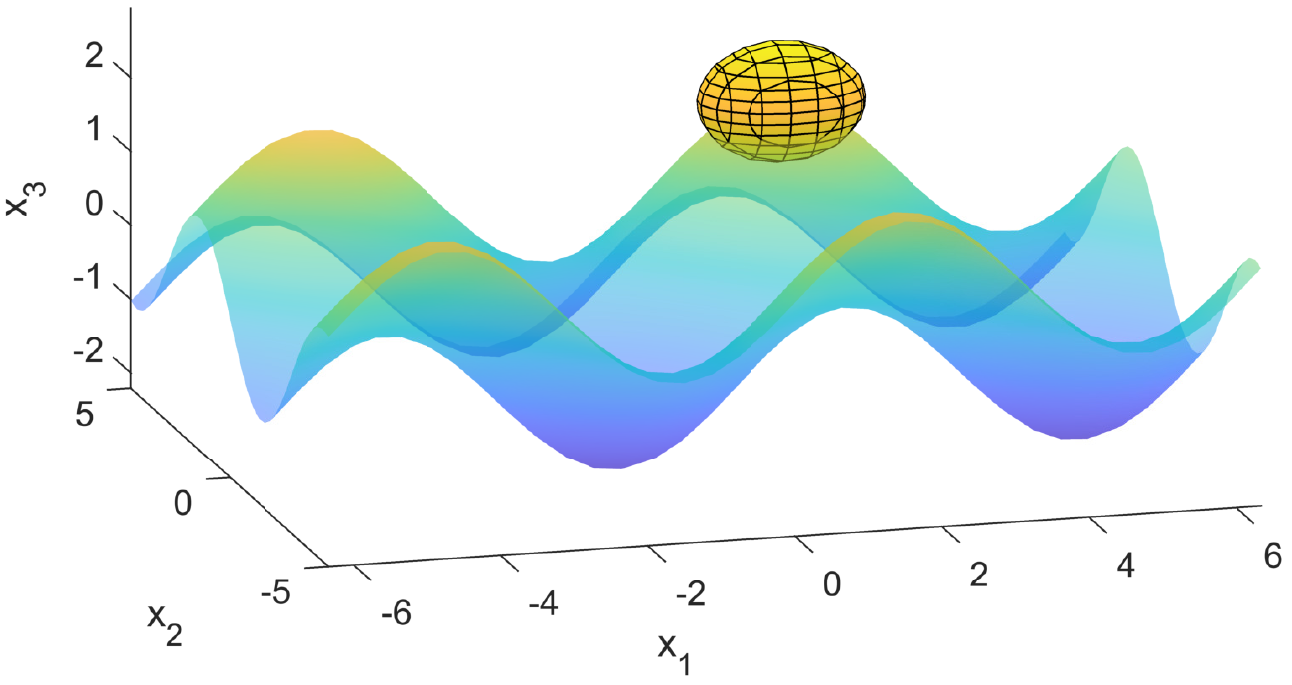}
\subcaption{sinusoid} \label{F1c}
\end{subfigure}
\caption{Two-sheeted hyperboloid and ellipsoid surfaces of
integration centered on convex and non-convex scanning surfaces.
in \eqref{F1c} so Bolker is not satisfied (see section
\ref{sect:visible}). 
} \label{F1}
\end{figure}\end{example}

 \section{Cylindrical measurement surface in $\mathbb{R}^3$}
\label{cylinder:sect}
In this section, we investigate in more detail the cylindrical scanning surface introduced in Example \ref{ex:cylinder} and Figure \ref{F2a}. Specifically, we show that any $L^2$ function, with compact support on a cylinder interior, can be recovered uniquely from its integrals over spheroids with foci on a cylinder. We also discuss in more detail the visible singularities and how the stability varies with the discretization of  emitters/receivers on the cylinder surface.

Our center set will be the cylinder of radius one with axis parallel to the second coordinate axis
and we will consider spheroids with rotation axis on
$C$, centers \bel{def:s}\vs = \vs(\phi_0,y_0) =
(\cos\phi_0,y_0,\sin\phi_0)^T\in C,\ee and fixed aspect ratio $s\in (0,1]$.
This gives the Radon transform 
\begin{equation}\label{def:Rs}
R_sf(p,\phi_0,y_0)=Rf\paren{\vs(\phi_0,y_0),\text{diag}(1,s^2,1),p^2},
\end{equation}
where $t$ is replaced by $p^2=t$ in \eqref{eq:PsiA-v2}.

In the following subsections, we address the injectivity and
microlocal stability properties of $R_s$.

\remark{{Injectivity results are proven in \cite{HomanZhou}, for
a class of generalized Radon transforms for real-analytic submanifolds
in a compact real-analytic manifold with boundary. This important work
does not imply injectivity for $R_s$ for reasons which we now explain.
The key point is that the spheroids $R_s$ integrates over are not
parameterized as in \cite{HomanZhou}. After a reduction, the authors in \cite{HomanZhou}
parameterize their manifolds of integration by $(s,\theta)\in
\rr\times S^{n-1}$ \cite[p.\ 1518]{HomanZhou}, but our spheroids are
parameterized by $(p,\vs)\in \dot{\rr}\times C$. $C$ is
topologically neither compact nor simply connected, although $S^{n-1}$
is. Thus, the theory of \cite{HomanZhou} cannot apply to $R_s$.
To prove injectivity for $R_s$, we apply linear Volterra equation theory. We
also provide an inversion method based on Neumann series. }
}

\subsection{Injectivity}
We first introduce notation we will use in the proofs and define the
auxiliary variables
\begin{equation}
\label{cyl_param} \hxo =\hxo(\phio, x_1,x_3)=
\sqrt{(x_1-\cos\phio)^2+(x_3-\sin\phio)^2}\qquad \hxt =
\hxt(x_2,y_0) =x_2-y_0,\end{equation} then $\hxo$ represents the
radius of the circle in Figure \ref{fig1}. In this notation, we can
describe the spheroid defined by matrix $M = \text{diag}(1,{s^2},1)$,
center $\vs(\phio,y_0)$, and parameter $p$ as
\begin{equation}\label{cyl} \hat{x}_1^2+s^2\hat{x}_2^2=p^2.\end{equation} We use standard cylindrical coordinates for points
inside $C$:
 \[\vx =(x_1,x_2,x_3)=(r\cos\phi,x_2,r\sin\phi), \ \ \text{where}\ \  r>0
\ \ \phi\in[0,2\pi].
\] 
 In
this notation the polar radius for $\vx$ is 
\bel{cyl2}r=\sqrt{\hat{x}_1^2+1-2\hat{x}_1\cos\theta},\ \ \
\hphi = \phi-\phio, \ \ \
\frac{\hat{x}_1}{\sin\hphi}=\frac{r}{\sin\theta},
\ee
where $\hphi$ is the angle between
$(x_1,x_3)$ and $(\cos\phio,\sin\phio)$. See Figure \ref{fig1}. Note that Figure \ref{fig1}
shows the picture for $\phio=0$ and, in general, the picture is
rotated and $\hphi$ is measured from the vector
$(\cos\phio,\sin\phio)$.

We use Figures \ref{fig1} and \ref{fig2} in the proofs to explain the
geometry behind our integrals. They show two cross-sections of the
spheroid \eqref{cyl} with center $\vs( 0,y_0)=(1,y_0,0)$: first with a plane
perpendicular to the $x_2$ axis in Figure \ref{fig1} and second with a plane
containing the axis of the cylinder and $\vs$ in Figure \ref{fig2}.


\begin{figure}[!h]
\centering
\begin{tikzpicture}[scale=0.75]
\draw [->,line width=1pt] (-5,0)--(5,0)node[right] {$x_1$};
\draw [->,line width=1pt] (0,-5)--(0,5)node[right] {$x_3$};
\draw[green,rounded corners=1mm] (0-1,0+1) \irregularcircle{0.75cm}{1mm};
\node at (-0.9-1,1+1) {$f$};
\draw (0,0) circle (4.5);
\draw (4.5,0) circle (2);
\coordinate (x) at (3,1.3229);
\coordinate (O) at (0,0);
\coordinate (c) at (4.5,0);
\draw (O)--(x);
\draw (c)--(x);
\draw pic[draw=orange, <->,"$\ \fnshphi$", angle eccentricity=2] {angle = c--O--x};
\draw pic[draw=orange, <->,"$\theta$", angle eccentricity=1.5] {angle = x--c--O};
\node at (1.5,0.9) {$r$};
\node at (4,1) {$\hat{x}_1$};
\node at (2.25,-0.3) {1};
\end{tikzpicture}
\caption{A cross section of the spheroid with center $(1,0,0)=\vs(0,0)$
perpendicular to the spheroid axis at $\hxt=x_2$.  The axes are
$(x_1,x_3)$ because $\phio=0$ but the picture would be rotated for
general  $\phio$ and then the angle $\hphi$ would be measured  from the
ray containing $(\cos\phio,\sin\phio)$. The cylinder has unit radius.} \label{fig1}
\end{figure}
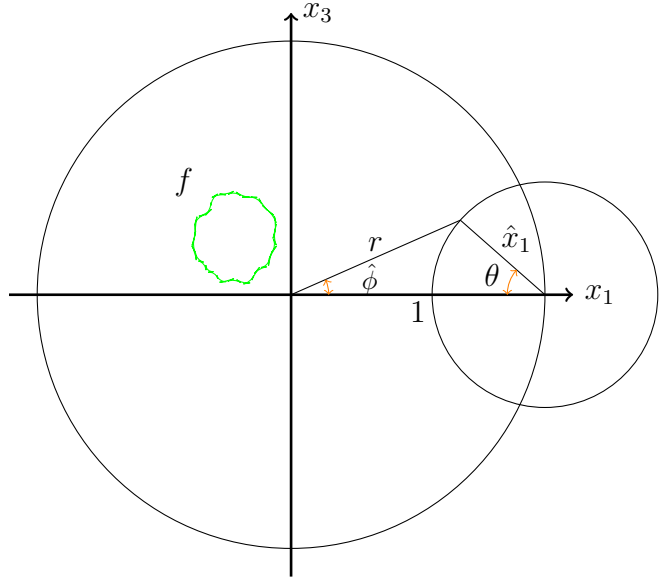

\begin{figure}[!h]
\centering
\begin{tikzpicture}[scale=0.75]
\draw [->,line width=1pt] (-5,0)--(5,0);
\draw [->,line width=1pt] (0,-5)--(0,5)node[right] {$\hat{x}_2$};
\draw[green,rounded corners=1mm] (0-1,2.6458) \irregularcircle{0.75cm}{1mm};
\node at (-0.9-1,2.6458+1) {$f$};
\coordinate (x) at (3,2.6458);
\coordinate (O) at (0,0);
\coordinate (c) at (4.5,0);
\draw (c)--(x);
\draw (x)--(4.5,2.6458);
\node at (3.9,3) {$\hat{x}_1$};
\node at (4.85,1.32) {$\hat{x}_2$};
\node at (-2.25,-0.3) {1};
\draw (4.5,-5)--(4.5,5);
\draw (-4.5,-5)--(-4.5,5);
\draw (4.5,0) ellipse (2 and 4);
\node at (3.5,-0.3) {$p$};

\node at (4.7,-0.3) {$\vs$};

\draw (4.5,0) [line width = 2pt] circle (0.05);

\node at (4.8,-2) {$\frac{p}{s}$};
\draw [dashed] (-7,2.6458)node[left]{$(x_1,x_3)$ plane}--(7,2.6458);
\end{tikzpicture}
\caption{Cylindrical geometry $(\hat{x}_1,\hat{x}_2)$ plane cross
section. The $(x_1,x_3)$ plane of Figure \ref{fig1} is drawn as a dashed line.}
\label{fig2}
\end{figure}
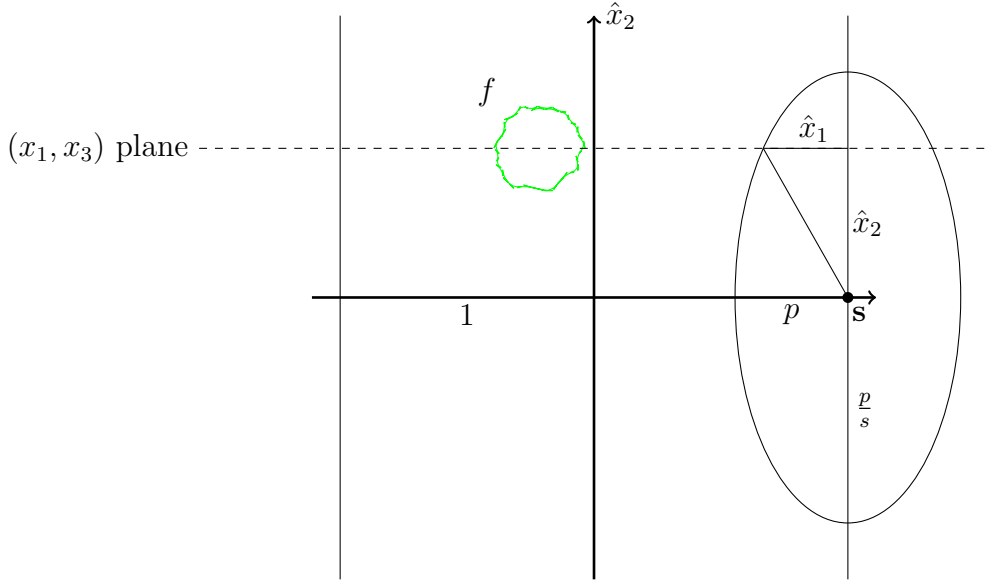

The  following proposition is the first step in writing $R_s f$ in
terms of Volterra equations of Fourier coefficients of $f$.

\begin{proposition}
Let $f\in L^2_c(\mathbb{R}^3)$. Then
\begin{equation}
R_sf(p,\phi_0,y_0)=\int_{-\frac{p}{s}}^{\frac{p}{s}}\sqrt{p^2-s^2\hat{x}_2^2+s^4\hat{x}_2^2}\int_{-\pi}^{\pi}f\paren{r,\sin^{-1}\paren{\frac{\hat{x}_1}{r}\sin\theta}+\phi_0,\hat{x}_2+y_0}\mathrm{d}\theta\mathrm{d}\hat{x}_2.
\end{equation}
\end{proposition}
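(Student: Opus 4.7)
My plan is to recognize $R_s f(p,\phi_0,y_0)$ as a surface integral over the spheroid and then change to coordinates suited to the cylindrical geometry. Starting from the expression for $Rf$ in \eqref{eq:RdefA}, the factor $\abs{\nabla_\vx \Psi}$ exactly cancels the $1/\abs{\nabla_\vx \Psi}$ produced when $\delta(\Psi)\,d\vx$ is rewritten as the induced surface measure on $\{\Psi = 0\}$. With the specialization in \eqref{def:Rs}, this gives $R_s f(p,\phi_0,y_0) = \int_\Sigma f\,dS$, where $\Sigma = \{\vx \in \rthree : \hat{x}_1^2 + s^2 \hat{x}_2^2 = p^2\}$ is the integration spheroid and $\hat{x}_1, \hat{x}_2$ are as in \eqref{cyl_param}.

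Next I parameterize $\Sigma$ by $(\theta, \hat{x}_2) \in [-\pi,\pi] \times [-p/s, p/s]$: the horizontal slice at height $\hat{x}_2$ is a circle of radius $\hat{x}_1 = \sqrt{p^2 - s^2 \hat{x}_2^2}$ in the plane $\{x_2 = y_0 + \hat{x}_2\}$, centered at $\vs$, and $\theta$ is the angle at the spheroid axis measured from the ray pointing from $\vs$ back toward the cylinder axis, consistent with Figure \ref{fig1}. A direct computation shows that the two parameter derivatives are orthogonal, with $\abs{\partial_\theta \vx} = \hat{x}_1$ and $\abs{\partial_{\hat{x}_2} \vx}^2 = 1 + s^4 \hat{x}_2^2/\hat{x}_1^2$, so the induced area element is
\[
dS = \sqrt{\hat{x}_1^2 + s^4 \hat{x}_2^2}\,d\theta\,d\hat{x}_2 = \sqrt{p^2 - s^2 \hat{x}_2^2 + s^4 \hat{x}_2^2}\,d\theta\,d\hat{x}_2,
\]
which already matches the weight factor in the stated formula.

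Finally, I express the point $\vx(\theta,\hat{x}_2) \in \Sigma$ in the cylindrical coordinates $(r,\phi,x_2)$ around the cylinder axis. The vertical coordinate is immediate: $x_2 = \hat{x}_2 + y_0$. For $(r,\phi)$, I use the planar triangle with vertices $O$, the planar projection of $\vs$, and $(x_1,x_3)$ shown in Figure \ref{fig1}: its sides have lengths $1$, $\hat{x}_1$, and $r$, with angle $\theta$ at $\vs$ and angle $\hphi = \phi - \phi_0$ at $O$. The law of cosines yields $r = \sqrt{\hat{x}_1^2 + 1 - 2\hat{x}_1 \cos\theta}$, which matches \eqref{cyl2}, and the law of sines yields $\sin\hphi = (\hat{x}_1/r)\sin\theta$, so $\phi = \phi_0 + \sin^{-1}(\hat{x}_1 \sin\theta/r)$. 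Substituting into $\int_\Sigma f\,dS$ produces the claimed identity.

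The technical content of the proof is modest; the only real bookkeeping is choosing the orientation of $\theta$ consistently so that $\partial_\theta \vx \perp \partial_{\hat{x}_2}\vx$ and the Jacobian collapses to the clean form $\sqrt{\hat{x}_1^2 + s^4 \hat{x}_2^2}$, after which the triangle identities line up with the cylindrical parameterization essentially by inspection.
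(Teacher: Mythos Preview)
Your proposal is correct and follows essentially the same route as the paper's proof. The paper computes the surface element via the surface-of-revolution formula $\mathrm{d}A = \hat{x}_1\,\mathrm{d}l\,\mathrm{d}\theta$ with $\mathrm{d}l = \sqrt{1 + (\mathrm{d}\hat{x}_1/\mathrm{d}\hat{x}_2)^2}\,\mathrm{d}\hat{x}_2$, which is exactly your direct Jacobian computation in disguise, and then invokes \eqref{cyl_param}--\eqref{cyl2} for the cylindrical coordinates just as you do via the triangle identities.
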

\begin{proof}
Let
$\mathrm{d}l=\sqrt{1+\paren{\frac{\mathrm{d}\hat{x}_1}{\mathrm{d}\hat{x}_2}}^2}\mathrm{d}\hat{x}_2$
be the arc measure on the ellipse in Figure \ref{fig2}. Then the
surface element on the spheroid of revolution is
\begin{equation}
\mathrm{d}A=\hat{x}_1\mathrm{d}l\mathrm{d}\theta
=\sqrt{p^2-s^2\hat{x}_2^2+s^4\hat{x}_2^2}\mathrm{d}\theta\mathrm{d}\hxt.
\end{equation}
Thus, using equation \eqref{cyl_param} and \eqref{cyl2} to rewrite
$\phi = \hphi+\phio$ in terms of $\theta$, it follows that
\begin{equation}
\label{radon_s}
R_sf(p,\phi_0,y_0)=\int_{-\frac{p}{s}}^{\frac{p}{s}}\sqrt{p^2-s^2\hat{{x}}_2^2+s^4\hat{{x}}_2^2}\int_{-\pi}^{\pi}f\paren{r,\sin^{-1}\paren{\frac{\hat{x}_1}{r}\sin\theta}+\phi_0,\hat{{x}}_2+y_0}\mathrm{d}\theta\mathrm{d}\hat{{x}}_2,
\end{equation}
which completes the proof.\end{proof}

We now have our main injectivity result.
\begin{theorem}
For any fixed $s\in (0,1]$, $R_s$ is injective on domain $L^2_c(C)$,
where $C$ is the open unit cylinder .

\end{theorem}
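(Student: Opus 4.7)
The plan is to exploit the cylindrical symmetries of $R_s$---invariance under rotations about and translations along the $x_2$-axis---to reduce injectivity to a family of one-dimensional Volterra integral equations, which are then inverted via classical Volterra theory as in \cite{Q1983-rotation}.

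First, write $\vx$ in cylindrical coordinates $(r,\phi,x_2)$ and expand $f \in L^2_c(C)$ as $f(r,\phi,x_2) = \sum_{k\in\mathbb{Z}} f_k(r,x_2)e^{ik\phi}$, where each $f_k(r,\cdot)$ is supported in $r \in [0,1)$ and has compact support in $x_2$. Since $R_s$ commutes with rotations in $\phio$ and translations in $\yo$, the $k$-th Fourier coefficient $g_k(p,\yo)$ of $R_sf$ in $\phio$ depends only on $f_k$. Taking the Fourier transform in $\yo$ with dual variable $\eta$, formula \eqref{radon_s} becomes
\begin{equation*}
\tilde{g}_k(p,\eta) \;=\; \int_{-p/s}^{p/s}\!\!\sqrt{p^2+s^2(s^2-1)\hxt^2}\;e^{i\eta\hxt}\;H_k(\hxo,\eta)\,\mathrm{d}\hxt,
\end{equation*}
where $\hxo=\sqrt{p^2-s^2\hxt^2}$ and $H_k(\hxo,\eta)=\int_{-\pi}^{\pi}\tilde{f}_k(r(\hxo,\theta),\eta)\,e^{ik\alpha(\theta)}\,\mathrm{d}\theta$ is the $k$-th angular Fourier moment of $\tilde f_k$ along the circle of radius $\hxo$ centered at a point of the unit circle.

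Second, change variables in the inner integral from $\theta$ to the polar radius $r=\sqrt{\hxo^2+1-2\hxo\cos\theta}$; since $\tilde{f}_k(r,\eta)$ vanishes for $r\geq 1$, the effective range is $r\in[1-\hxo,1)$ (valid whenever $\hxo<1$, i.e., $p<1$). Interchanging the order of integration---for each fixed $\rho:=1-r$, the contributing values of $\hxo$ lie in $[\rho,p]$---the relation collapses to a one-dimensional Volterra equation
\begin{equation*}
\tilde{g}_k(p,\eta) \;=\; \int_0^{p} \mathcal{K}_k(p,\rho,\eta)\,\tilde{f}_k(1-\rho,\eta)\,\mathrm{d}\rho, \qquad 0<p<1,
\end{equation*}
with an explicit kernel $\mathcal{K}_k$ obtained by integrating out $\hxo$ over $[\rho,p]$ and summing the two sign choices of $\hxt$. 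Third, analyze $\mathcal{K}_k$ near the diagonal $\rho=p$: the Jacobian of $\theta\mapsto r$ is singular at $\theta=0$ (where $r=1-\hxo$), producing an integrable $(p-\rho)^{-1/2}$ endpoint behavior in $\mathcal{K}_k$---an Abel-type first-kind Volterra structure. A standard Abel inversion (equivalently, differentiating once in $p$ and iterating) converts this into a Volterra equation of the second kind with continuous, nonvanishing diagonal kernel; classical theory \cite{Q1983-rotation} then forces $\tilde{f}_k(\cdot,\eta)\equiv 0$ on $(0,1)$ for every $(k,\eta)$. Inverting the Fourier series in $\phi$ and the Fourier transform in $x_2$ yields $f=0$ on $C$.

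The main obstacle is the last step: tracking the Jacobians through the two nested changes of variable to confirm the precise $(p-\rho)^{-1/2}$ endpoint behavior of $\mathcal{K}_k$ and to verify that its leading-order coefficient is nonvanishing uniformly in $(k,\eta)$ (so that Abel inversion is admissible). Steps 1--2 are essentially symmetry reductions and bookkeeping; once the Volterra/Abel structure is established, the uniqueness theorem closes the argument.
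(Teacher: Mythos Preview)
Your plan is essentially the paper's own proof: reduce by the rotational and translational symmetries to Fourier coefficients $\tilde f_k(r,\eta)$, change variables $\theta\mapsto r$ and $\hxt\mapsto\hxo$, swap the order of integration, and obtain a first-kind Volterra equation in the distance variable $\rho=1-r$ to be inverted by classical Volterra theory as in \cite{Q1983-rotation}.

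One technical point you have wrong, however, is the diagonal behavior of $\mathcal{K}_k$.  You account for the square-root singularity coming from the Jacobian of $\theta\mapsto r$ at $\theta=0$, but you overlook the second square-root singularity coming from the Jacobian of $\hxt\mapsto\hxo$ at $\hxo=p$ (where $\hxt=0$).  The integrand defining $\mathcal{K}_k(p,\rho,\eta)$ therefore behaves like $\bigl[(\hxo-\rho)(p-\hxo)\bigr]^{-1/2}$ on $[\rho,p]$; after the substitution $\hxo=\rho+v(p-\rho)$ this becomes $(v(1-v))^{-1/2}$, which integrates to~$\pi$.  Consequently $\mathcal{K}_k$ is \emph{bounded} on the diagonal, with the explicit nonzero value $\mathcal{K}_k(p,p,\eta)=\tfrac{\pi}{2}\,p\sqrt{1-p}$, independent of $k$ and $\eta$.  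There is no Abel-type $(p-\rho)^{-1/2}$ structure; the equation is an ordinary first-kind Volterra equation with bounded kernel nonvanishing on the diagonal, and the conversion to second kind is by a single differentiation in $p$ (after checking $\partial_p\mathcal{K}_k$ is bounded), not by Abel inversion.  This actually simplifies your Step~3.  Note also that the paper needs $f$ supported in $\{r<1-\epsilon\}$ so that the upper limit of the $r$-integral is bounded away from~$1$; you should make explicit how compact support in $C$ provides this cushion.
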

\begin{remark}
The following proof uses some similar intuitions to that of \cite{ambartsoumian2010inversion}, applied in that paper to circular Radon transforms. We extend such ideas to three-dimensions, and to spheroid surfaces.
\end{remark}
\begin{proof}  Let $\eps>0$. 
 We first prove the theorem if $f\in L^2_c(\Ceps)$, where
\bel{def:Ceps}\Ceps=\sparen{\vx\in\mathbb{R}^3 \st
\sqrt{x_1^2+x_3^2}<1-\epsilon}.\ee

Let $f\in L^2_c(\Ceps)$. Taking the Fourier transform in $y_0$ on both
sides on \eqref{radon_s} yields
\begin{equation}
\label{radon_s_1}\begin{split}
&\widehat{R_sf}(p,\phi_0,\eta)=\\
&2\int_{0}^{\frac{p}{s}}\cos(\eta
\hxt)\sqrt{p^2-s^2\hat{x}_2^2+s^4\hat{x}_2^2}\int_{-\pi}^{\pi}\hat{f}\paren{r,\sin^{-1}\paren{\frac{\hat{x}_1}{r}\sin\theta}+\phi_0,\eta}\mathrm{d}\theta\mathrm{d}\hat{x}_2,
\end{split}
\end{equation}
where $\eta$ is dual to $y_0$. We now calculate the
Fourier components in $\phi_0$ on both sides of \eqref{radon_s_1},
where
$\hat{f}_n\paren{r,\eta}=\frac{1}{2\pi}\int_{0}^{2\pi}\hat{f}(r,\phi,\eta)e^{-i
n \phi}\mathrm{d}\phi$:
\begin{equation}
\label{1232}
\widehat{R_sf}_n(p,\eta)=
4\int_{0}^{\frac{p}{s}}\cos(\eta
\hxt)\sqrt{p^2-s^2\hat{x}_2^2+s^4\hat{x}_2^2}\int_{0}^{\pi}\cos(n\hphi)\hat{f}_n\paren{r,\eta}\mathrm{d}\theta\mathrm{d}\hat{x}_2.
\end{equation}
Note that $r$ and $\hphi$ depend on $\theta$
and $\hxo$ as given in \eqref{cyl2}, with $r$ even as a function of $\theta$, and $\hat{\phi}$ odd as a function of $\theta$.

 Making the change of variables $\hat{x}_1=\sqrt{p^2-s^2\hat{x}_2^2}$ in the $\hat{x}_2$ integral yields
\begin{equation}\begin{split}
&\widehat{R_sf}_n(p,\eta)=\\
&\frac{4}{s}\int_{0}^{p}\frac{\hat{x}_1\sqrt{\hat{x}_1^2+s^2(p^2-\hat{x}_1^2)}}{\sqrt{p^2-\hat{x}_1^2}}\cos\paren{\frac{\eta}{s}\sqrt{p^2-\hat{x}_1^2}}\int_{0}^{\pi}\cos(n\hphi)\hat{f}_n\paren{r,\eta}\mathrm{d}\theta\mathrm{d}\hat{x}_1.
\end{split}\end{equation}
Let us now do a change of variables in the $\theta$ integral.
Using Figure \ref{fig1} and \eqref{cyl2}, we see
$r=\sqrt{\hat{x}_1^2+1-2\hat{x}_1\cos\theta}$, and
$\sin\hphi=\frac{\hat{x}_1}{r}\sin\theta$. We have
$$\frac{\mathrm{d}r}{\mathrm{d}\theta}=\frac{\hat{x}_1}{r}\sin\theta=\sin\hphi,\ \ \  \cos\hphi=\frac{r^2+1-\hat{x}_1^2}{2r}.$$
Then
\small
\begin{equation}\label{radon_s_2_1}\begin{split}
&\widehat{R_sf}_n(p,\eta)=\\
&\frac{4}{s}\int_{0}^{p}\frac{\hat{x}_1\sqrt{\hat{x}_1^2+s^2(p^2-\hat{x}_1^2)}}{\sqrt{p^2-\hat{x}_1^2}}\cos\paren{\frac{\eta}{s}\sqrt{p^2-\hat{x}_1^2}}\int_{1-\hat{x}_1}^{1-\epsilon}\frac{T_{|n|}\paren{\frac{r^2+1-\hat{x}_1^2}{2r}}}{\sqrt{1-\paren{\frac{r^2+1-\hat{x}_1^2}{2r}}^2}}\hat{f}_n\paren{r,\eta}\mathrm{d}r\mathrm{d}\hat{x}_1\end{split}\end{equation}\normalsize
where $T_{|n|}$ is a Chebyshev polynomial degree $|n|$,
$n\in\mathbb{Z}$. Because $f$ is supported in $C_\eps$,  the upper limit in the  inner
integral in \eqref{radon_s_2_1} can be  $r=1-\eps$, rather than
$r=1+\hxo$.  Now, using Fubini's theorem, we
see\small\begin{equation}\label{radon_s_2_2}\begin{split}
&\widehat{R_sf}_n(p,\eta)=\\
&\frac{4}{s}\int_{1-p}^{1-\epsilon}\int_{1-r}^{p}\frac{\hat{x}_1\sqrt{\hat{x}_1^2+s^2(p^2-\hat{x}_1^2)}\cos\paren{\frac{\eta}{s}\sqrt{p^2-\hat{x}_1^2}}T_{|n|}\paren{\frac{r^2+1-\hat{x}_1^2}{2r}}}{\sqrt{p^2-\hat{x}_1^2}\sqrt{1-\paren{\frac{r^2+1-\hat{x}_1^2}{2r}}^2}}\hat{f}_n\paren{r,\eta}\mathrm{d}\hat{x}_1\mathrm{d}r,
\end{split}
\end{equation}
\normalsize

 Substituting $u=1-r$ yields
\begin{equation}
\label{volt2}
\begin{split}
\widehat{R_sf}_n(p,\eta)&=\frac{4}{s}\int_{0}^{p}K_n(\eta ; p,u)\tilde{\hat{f}}_n\paren{u,\eta}\mathrm{d}u,
\end{split}
\end{equation}
a Volterra equation of the first kind, where 
\begin{equation}\label{kernel:Kn}
K_n(\eta ; p,u)=\int_{u}^{p}\frac{\hat{x}_1\sqrt{\hat{x}_1^2+s^2(p^2-t^2)}\cos\paren{\frac{\eta}{s}\sqrt{p^2-\hat{x}_1^2}}T_{|n|}\paren{\frac{(1-u)^2+1-\hat{x}_1^2}{2(1-u)}}}{\sqrt{p^2-\hat{x}_1^2}\sqrt{1-\paren{\frac{(1-u)^2+1-\hat{x}_1^2}{2(1-u)}}^2}}\mathrm{d}\hat{x}_1,
\end{equation}
and $\tilde{\hat{f}}_n\paren{u,\eta}=\hat{f}_n\paren{1-u,\eta}$.

To show injectivity, we let $\eps_1\in (0,1/2)$ and first bound the
kernel $K_n$ and its derivative for each fixed $\eta$ on the set
\bel{def:Deps} D_{\eps_1} = \sparen{(p,u)\st p\in
[\eps,1-\eps_1],\ u\in  [\eps,
p]}.\ee We have
\begin{equation}
\begin{split}
\sqrt{1-\paren{\frac{(1-u)^2+1-\hat{x}_1^2}{2(1-u)}}^2}&=\sqrt{1-\paren{1+\frac{u^2-\hat{x}_1^2}{2(1-u)}}^2}\\
&=\sqrt{\frac{\hat{x}_1^2-u^2}{1-u}+\frac{(\hat{x}_1^2-u^2)(u^2-\hat{x}_1^2)}{4(1-u)^2}}\\
&=\frac{\sqrt{\hat{x}_1^2-u^2}}{\sqrt{1-u}}\sqrt{1+\frac{u^2-\hat{x}_1^2}{4(1-u)}}.
\end{split}
\end{equation}
Thus
\begin{equation}
\label{B.9}
\begin{split}
&K_n(\eta ; p,u)=
\sqrt{1-u}\int_{u}^{p}\frac{\hat{x}_1\sqrt{\hat{x}_1^2+s^2(p^2-\hat{x}_1^2)}\cos\paren{\frac{\eta}{s}\sqrt{p^2-\hat{x}_1^2}}T_{|n|}\paren{\frac{(1-u)^2+1-\hat{x}_1^2}{2(1-u)}}}{\sqrt{p^2-\hat{x}_1^2}\sqrt{\hat{x}_1^2-u^2}\sqrt{1+\frac{u^2-\hat{x}_1^2}{4(1-u)}}}\mathrm{d}\hat{x}_1\\
&=\sqrt{1-u}\int_{0}^{1}\frac{\hat{x}_1\sqrt{\hat{x}_1^2+s^2(p^2-\hat{x}_1^2)}\cos\paren{\frac{\eta}{s}\sqrt{p^2-\hat{x}_1^2}}T_{|n|}\paren{\frac{(1-u)^2+1-\hat{x}_1^2}{2(1-u)}}}{\sqrt{v}\sqrt{1-v}\sqrt{p+\hat{x}_1}\sqrt{\hat{x}_1+u}\sqrt{1+\frac{u^2-\hat{x}_1^2}{4(1-u)}}}\mid_{\hat{x}_1=u+v(p-u)}\mathrm{d}v,
\end{split}
\end{equation}
after substituting \bel{v change}\hat{x}_1=u+v(p-u)\ee in the last step. We have
\begin{equation}
\begin{split}
K_n(\eta ; p,p)&=\frac{p\sqrt{1-p}}{2}\int_{0}^{1}\frac{1}{\sqrt{v}\sqrt{1-v}}\mathrm{d}v\\
&=\frac{\pi  p\sqrt{1-p}}{2},
\end{split}
\end{equation}
and thus $K_n(\eta,\cdot,\cdot)$ is non-zero on the diagonal, unless
$p=0,1$, for all $\eta\in\mathbb{R}$ and $n\in\mathbb{Z}$. The
support of $f$ is bounded away from the cylinder surface and we are
considering  $p\in [\eps,1-\eps_1]$, and thus we do not consider $p=0,1$.

We will now show that $K_n(\eta;p,u)$ and
$\frac{\mathrm{d}}{\mathrm{d}p}K_n(\eta ; p,u)$ are bounded for each
$\eta$ and all $(p,u)\in D_{\eps_1}$. To do this, we show that all the
terms dependent on $p$ under the integral on the second line of
\eqref{B.9} are bounded and have bounded first order derivative with
respect to $p$. First we have $|\hat{x}_1|\leq 1$ and from the change
of variable \eqref{v change}, $\frac{\mathrm{d}}{\mathrm{d}p}
\hat{x}_1 =v\leq 1$. Now
$$\sqrt{\hat{x}_1^2+s^2(p^2-\hat{x}_1^2)}=\sqrt{(1-s^2)\hat{x}_1^2+s^2p^2}\leq\sqrt{(1-s^2)+s^2}=1,$$
and
$$\frac{\mathrm{d}}{\mathrm{d}p}\sqrt{(1-s^2)\hat{x}_1^2+s^2p^2}=\frac{ps^2+v(1-s^2)\hat{x}_1}{\sqrt{(1-s^2)\hat{x}_1^2+s^2p^2}}\leq \frac{1}{\sqrt{(1-s^2)\epsilon^2+s^2\epsilon^2}}= \frac{1}{\epsilon},$$
noting $\hat{x}_1\geq u\geq \epsilon$ and $p>\eps$. We have $\left|\cos\paren{\frac{\eta}{s}\sqrt{p^2-\hat{x}_1^2}}\right|\leq 1$, and
\begin{equation}\label{deriv-estimate}
\begin{split}
\frac{\mathrm{d}}{\mathrm{d}p}\cos\paren{\frac{\eta}{s}\sqrt{p^2-\hat{x}_1^2}}&=-\frac{\frac{\eta}{s}(p-v\hat{x}_1)}{\sqrt{p^2-\hat{x}_1^2}}\sin\paren{\frac{\eta}{s}\sqrt{p^2-\hat{x}_1^2}}\\
&=-\paren{\frac{\eta}{s}}^2(p-v\hat{x}_1)\sinc\paren{\frac{\eta}{s}\sqrt{p^2-\hat{x}_1^2}},
\end{split}
\end{equation}
and thus $\left|\frac{\mathrm{d}}{\mathrm{d}p}\cos\paren{\frac{\eta}{s}\sqrt{p^2-\hat{x}_1^2}}\right| \leq \paren{\frac{\eta}{s}}^2$. For $n\neq 0$, we have $\left|T_{|n|}\paren{\frac{(1-u)^2+1-\hat{x}_1^2}{2(1-u)}}\right|\leq 1$ and
\begin{equation}
\begin{split}
\frac{\mathrm{d}}{\mathrm{d}p}T_{|n|}\paren{\frac{(1-u)^2+1-\hat{x}_1^2}{2(1-u)}}&=-\frac{v\hat{x}_1}{1-u}T'_{|n|}\paren{\frac{(1-u)^2+1-\hat{x}_1^2}{2(1-u)}}\\
&=-\frac{|n|v\hat{x}_1}{1-u}U_{|n|-1}\paren{\frac{(1-u)^2+1-\hat{x}_1^2}{2(1-u)}},
\end{split}
\end{equation}
where $U_n$ is a Chebyshev polynomial of the second kind. It follows that
$$\left|\frac{\mathrm{d}}{\mathrm{d}p}T_{|n|}\paren{\frac{(1-u)^2+1-\hat{x}_1^2}{2(1-u)}}\right|\leq
\frac{|n|^2}{\epsilon_1},$$
using Figure \ref{fig1} and the Law of Cosines,  that
$|U_{|n|}(x)|\leq |n|+1$ for $\abs{x}\leq 1$, and $\frac{1}{1-u}\leq
\frac{1}{\epsilon_1}$. The $n=0$ case is trivial since $T_0=1$. 



Now, we have $\frac{1}{\sqrt{p+\hat{x}_1}\sqrt{\hat{x}_1+u}}\leq\frac{1}{2\epsilon}$ and
$$\frac{\mathrm{d}}{\mathrm{d}p}\paren{\frac{1}{\sqrt{p+\hat{x}_1}\sqrt{\hat{x}_1+u}}}=\frac{1+2v}{\sqrt{p+\hat{x}_1}\sqrt{\hat{x}_1+u}}\leq
\frac{3}{2\epsilon}.$$
Finally, we have
$$\frac{u^2-\hat{x}_1^2}{4(1-u)}\geq \frac{u^2-1}{4(1-u)}=-\frac{(u+1)}{4}\geq -\frac{1}{2}.$$
Thus, $\paren{1+\frac{u^2-\hat{x}_1^2}{4(1-u)}}^{-\frac{1}{2}}\leq \sqrt{2}$, and
$$\frac{\mathrm{d}}{\mathrm{d}p}\paren{1+\frac{u^2-\hat{x}_1^2}{4(1-u)}}^{-\frac{1}{2}}=\frac{v\hat{x}_1}{4(1-u)}\paren{1+\frac{u^2-\hat{x}_1^2}{4(1-u)}}^{-\frac{3}{2}}\leq \frac{1}{\epsilon_1\sqrt{2}}.$$

After putting all this together, we can convert \eqref{volt2} into a
Volterra equation of the second kind with bounded kernel for $(p,u)\in
D_{\eps_1}$ and invert by successive approximations using classical
Volterra Integral equation theory \cite{Tricomi, Q1983-rotation}.
Therefore, if $R_sf=0$, then \eqref{volt2} implies that
$\hat{f}_n(r,\eta)=0$ for all $\eta$ and for $r\in [\eps_1,1-\eps]$. The
lower limit of $\eps_1$ was used only to show $K_n$ is bounded and so
$\hat{f}_n(r,\eta) = 0$ for all $r\in [0,1-\eps]$.

Thus, any $f\in L^2_c(C)$ is uniquely
determined by $R_sf$, for any fixed $s\in (0,1]$.
This implies that $R_s$ is injective on $L_c^2(C)$.
\end{proof}

\begin{remark}Note that the proof  requires
$f$ to be zero near the boundary of the cylinder. This is needed so
that the inner integral in \eqref{radon_s_2_1} can have upper limit
$1-\eps$ instead of $1$.  This allows us to prove that $K$ satisfies
the hypotheses to make \eqref{volt2} an invertible Volterra equation.

The standard inversion result for Volterra equations would not apply
to $K_n$ if that upper limit were $1$ since $K_n$ would no longer be
bounded. This suggests there could be a null space for $R_s$ on
$L^2(C)$.
\end{remark}

We now discuss the visible singularities.

\subsection{Visible singularities} \label{visible-cylinder:sect} In
this section, we investigate the singularity coverage (or edge
detection) using spheroid and spherical integral data when the surface
of sources and receivers is a unit cylinder with central axis $x_2$,
as considered in the previous section on injectivity. Let $\partial C$
denote the cylinder of source and receiver positions. We consider
$\phi$ and $x_2$ in the range $\phi \in [0,2\pi]$ and $x_2\in [-1,1]$,
and parameterize  $\partial C$ using cylindrical coordinates
$\vs(\phi,x_2) = (\cos\phi,x_2,\sin\phi)\in \partial C$, as in
\eqref{def:s} 
For every $\vx\in\{\sqrt{x_1^2+x_3^2}<1\} = C_I$ (i.e., for every $\vx$ in the interior of $\partial C$), we can calculate the proportion of wavefront directions that are detected by spherical and spheroid integral data. The spherical data is three-dimensional, and the degrees of freedom are $(p,\phi_0,y_0)$. The spheroid data is four-dimensional, with degrees of freedom $(s,p,\phi_0,y_0)$. We wish to investigate whether the additional scaling factor, $s$, offers any improvement in terms of edge detection. We discretize $\partial C$ with $\phi$ at $1^{\circ}$ intervals, $\phi\in \{\frac{j \pi}{180} : 0\leq j\leq 179\}=\Phi$, and $x_2\in\{-1+\frac{2j}{N-1} : 0\leq j\leq N-1\}=X_2$, where $N\geq 1$ controls the level of discretization along the $x_2$ axis. For the spherical data, we consider all spheres with centers $c\in \{(\cos\phi,x_2,\sin\phi) :\phi\in\Phi, x_2\in X_2\}=\partial C_0$. We consider all spheroids with axis of revolution parallel to $x_2$, whose foci $c_1,c_2 \in \partial C_0$. For every given $c$ and $\vx \in C_I$, we calculate the wavefront direction, $\xi = \frac{\vx- c}{\|\vx- c\|}$, detected. For every $\vx \in C_I$, we calculate all $180 \times N$ wavefront directions detected at $\vx$, and use this information to build a 3-D map of the total wavefront detection on $C_I$. Similarly, we can calculate a 3-D map of the directional coverage for the spheroid integral data, and compare against the spherical map.
\begin{figure}[!h]
\centering
\begin{subfigure}{0.32\textwidth}
\includegraphics[width=0.9\linewidth, height=4cm, keepaspectratio]{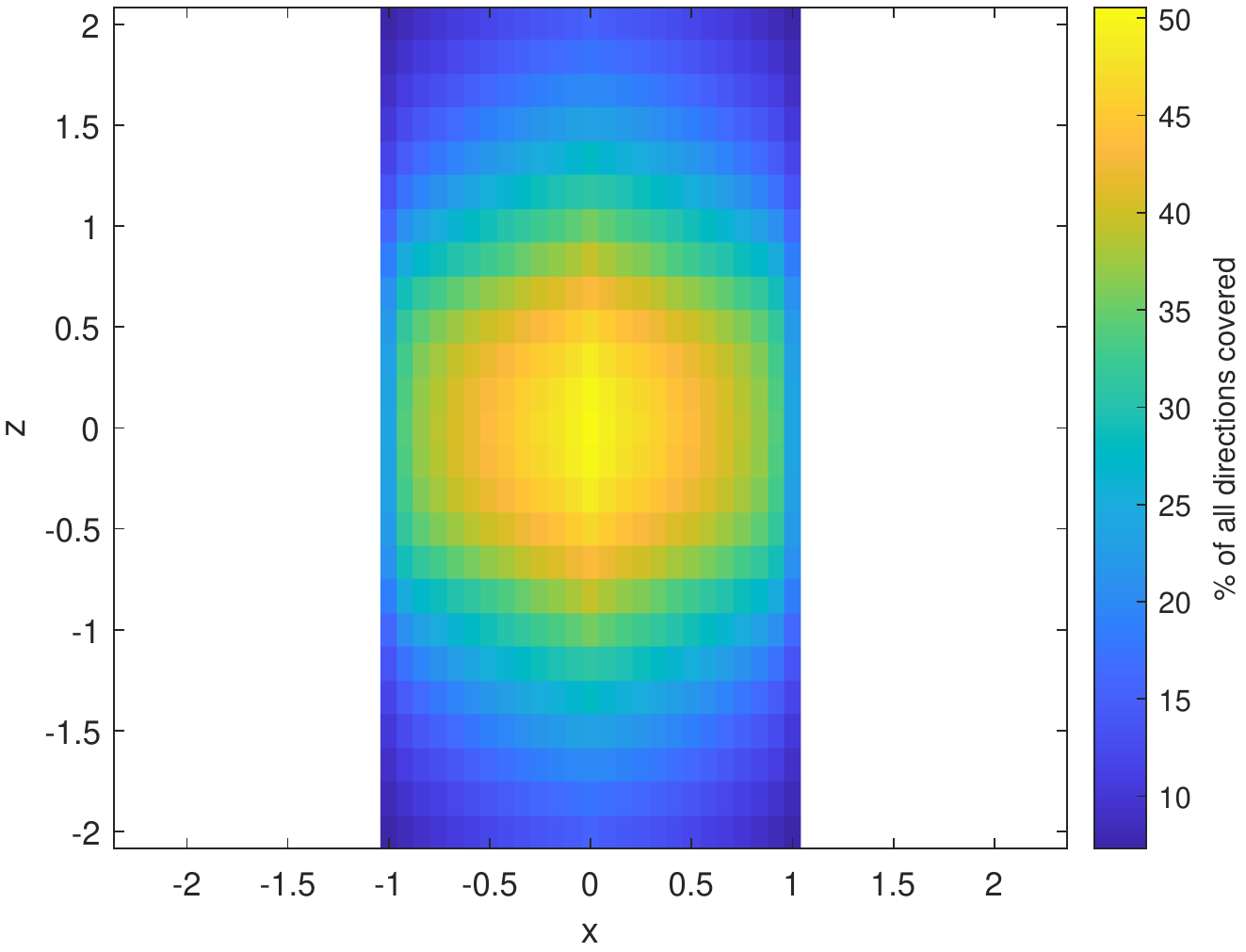}
\end{subfigure}
\begin{subfigure}{0.32\textwidth}
\includegraphics[width=0.9\linewidth, height=4cm, keepaspectratio]{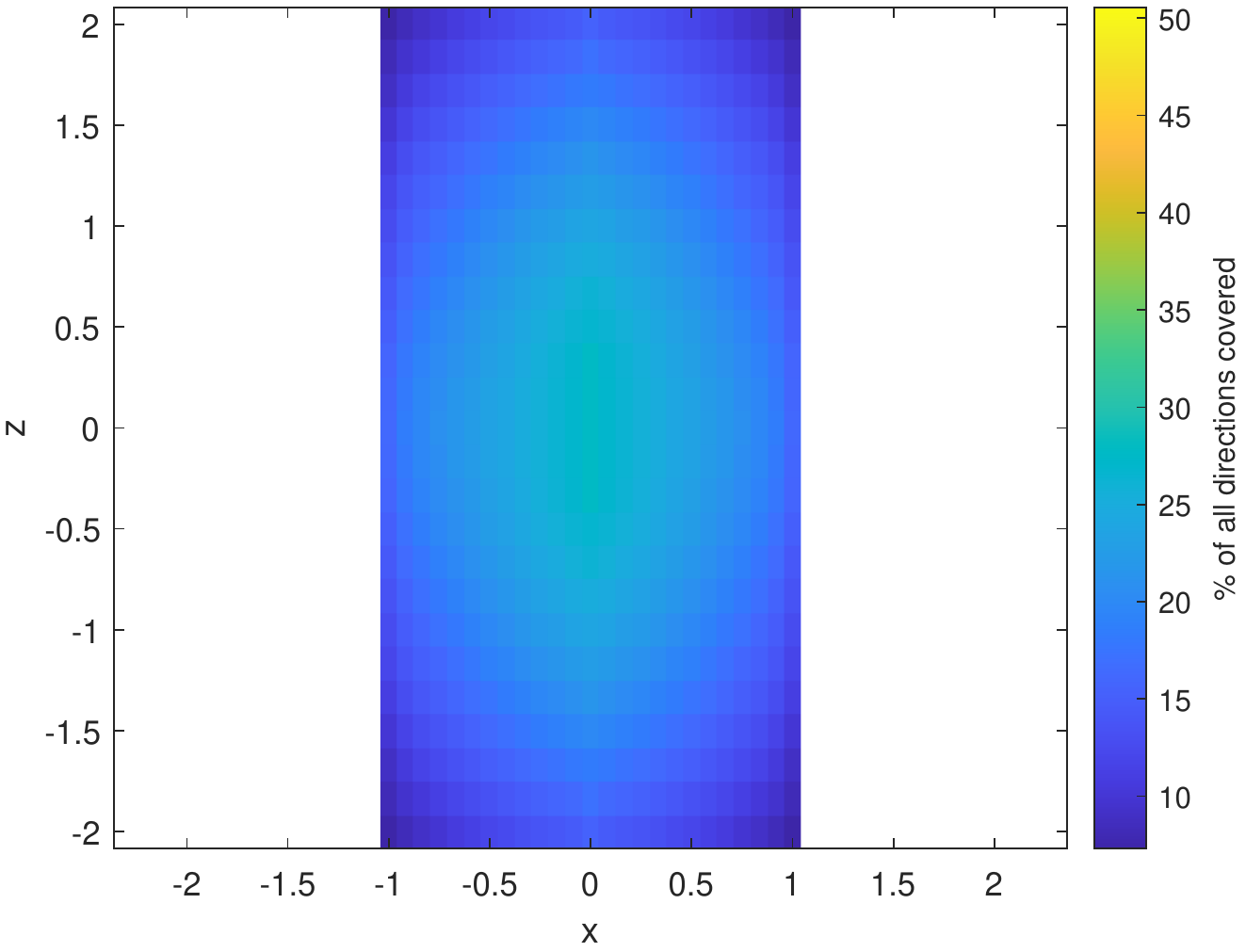} 
\end{subfigure}
\begin{subfigure}{0.32\textwidth}
\includegraphics[width=0.9\linewidth, height=4cm, keepaspectratio]{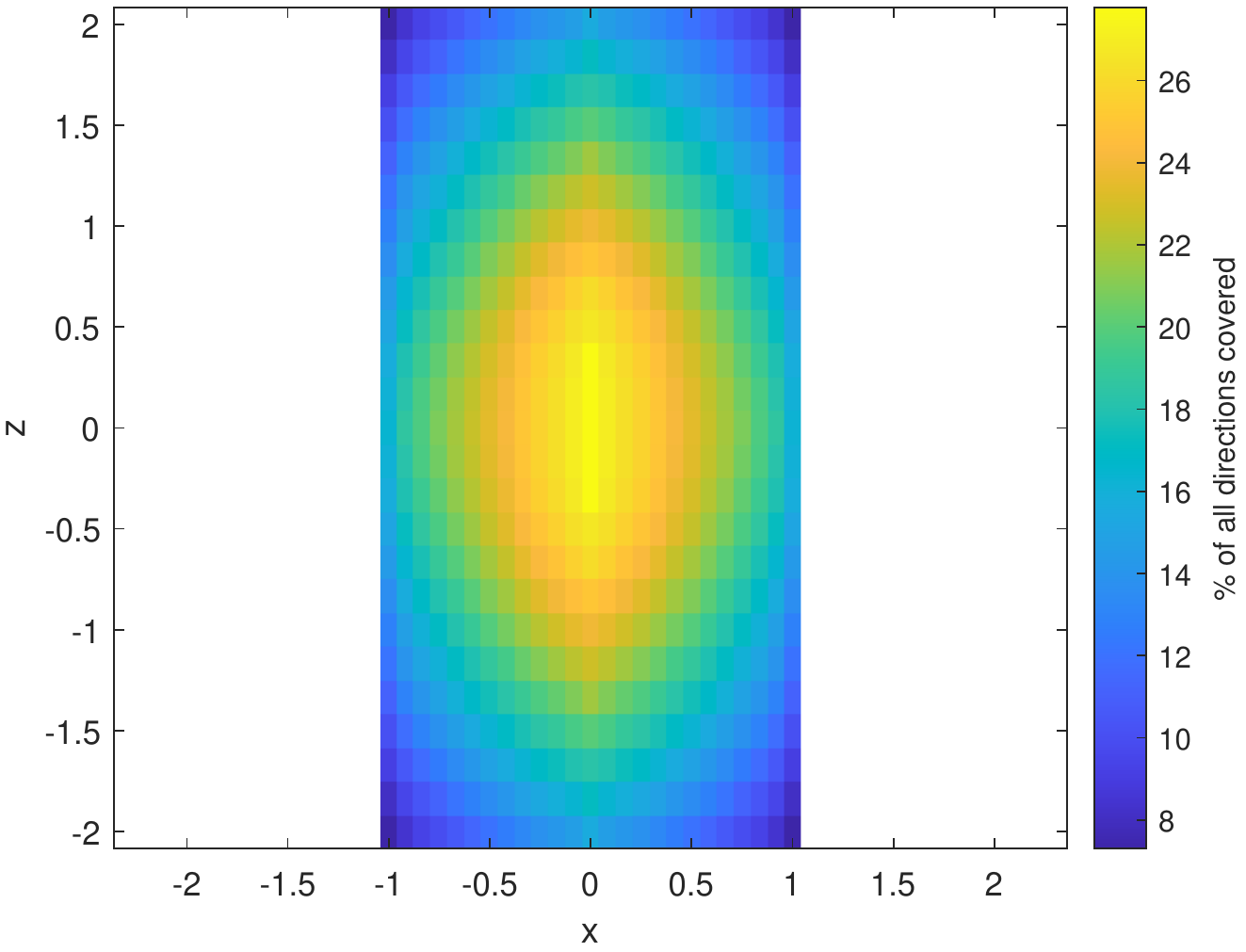}
\end{subfigure}
\begin{subfigure}{0.32\textwidth}
\includegraphics[width=0.9\linewidth, height=4cm, keepaspectratio]{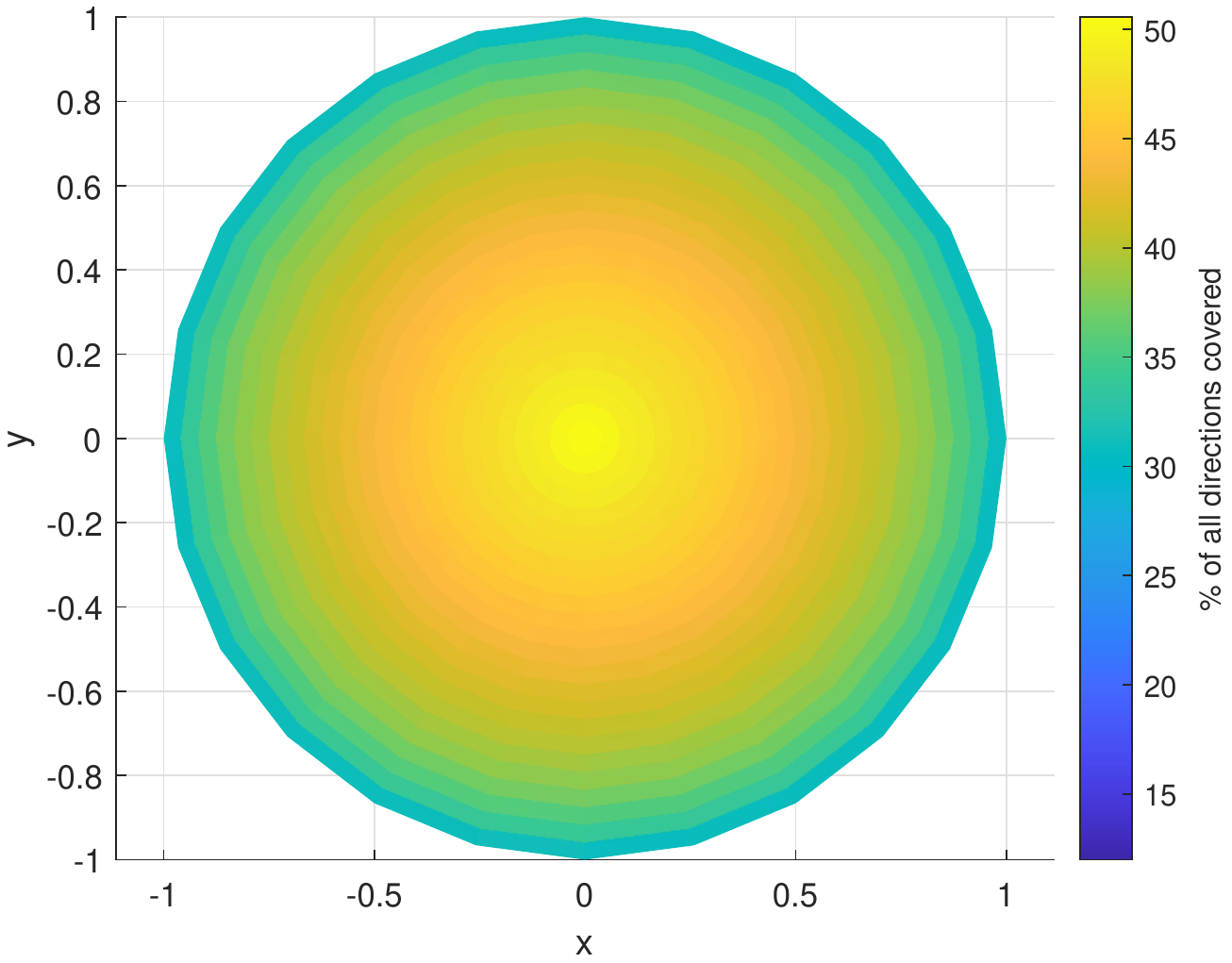}
\subcaption*{4D spheroid data} \label{Fvis_2d}
\end{subfigure}
\begin{subfigure}{0.32\textwidth}
\includegraphics[width=0.9\linewidth, height=4cm, keepaspectratio]{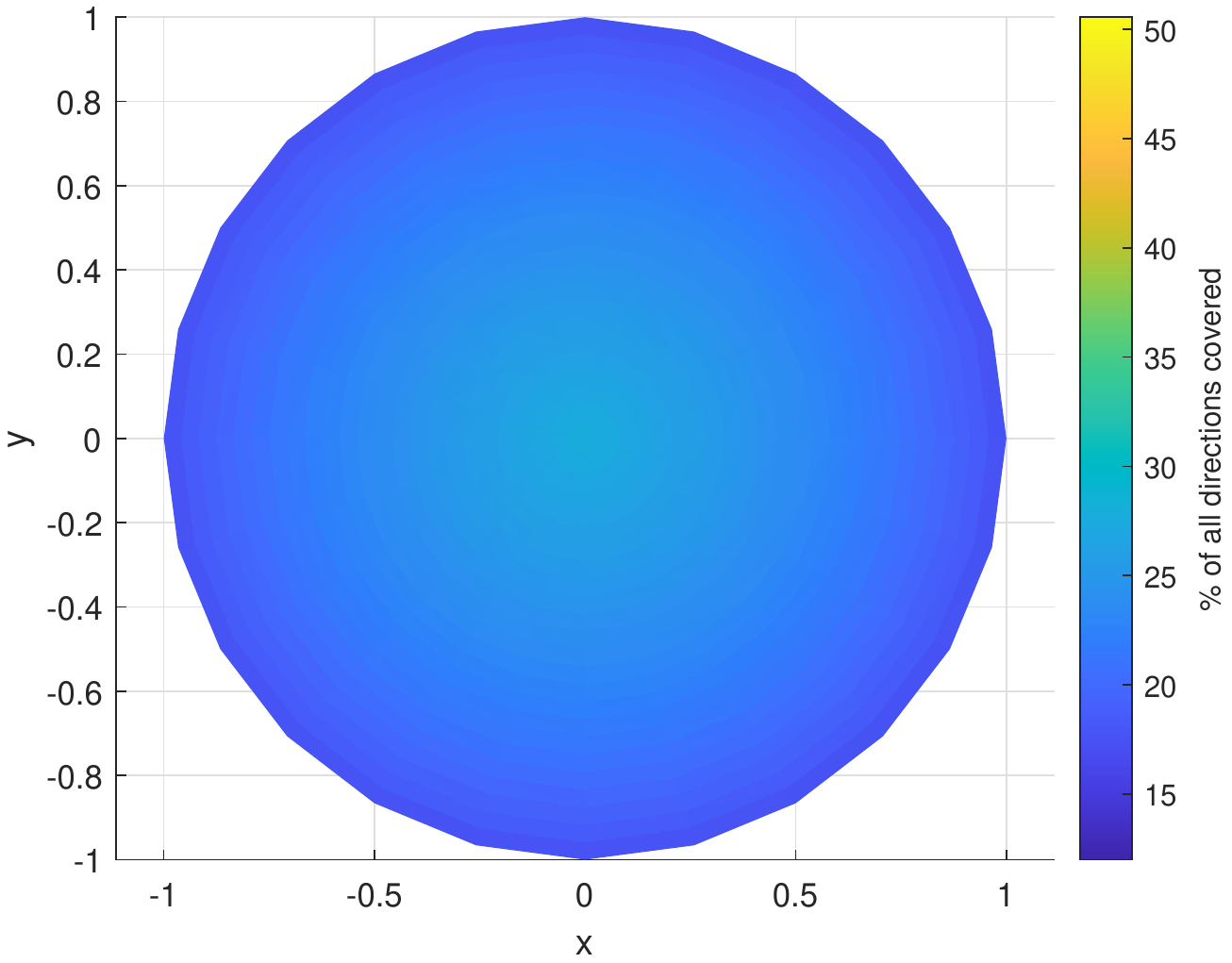} 
\subcaption*{3D sphere data} \label{Fvis_2e}
\end{subfigure}
\begin{subfigure}{0.32\textwidth}
\includegraphics[width=0.9\linewidth, height=4cm, keepaspectratio]{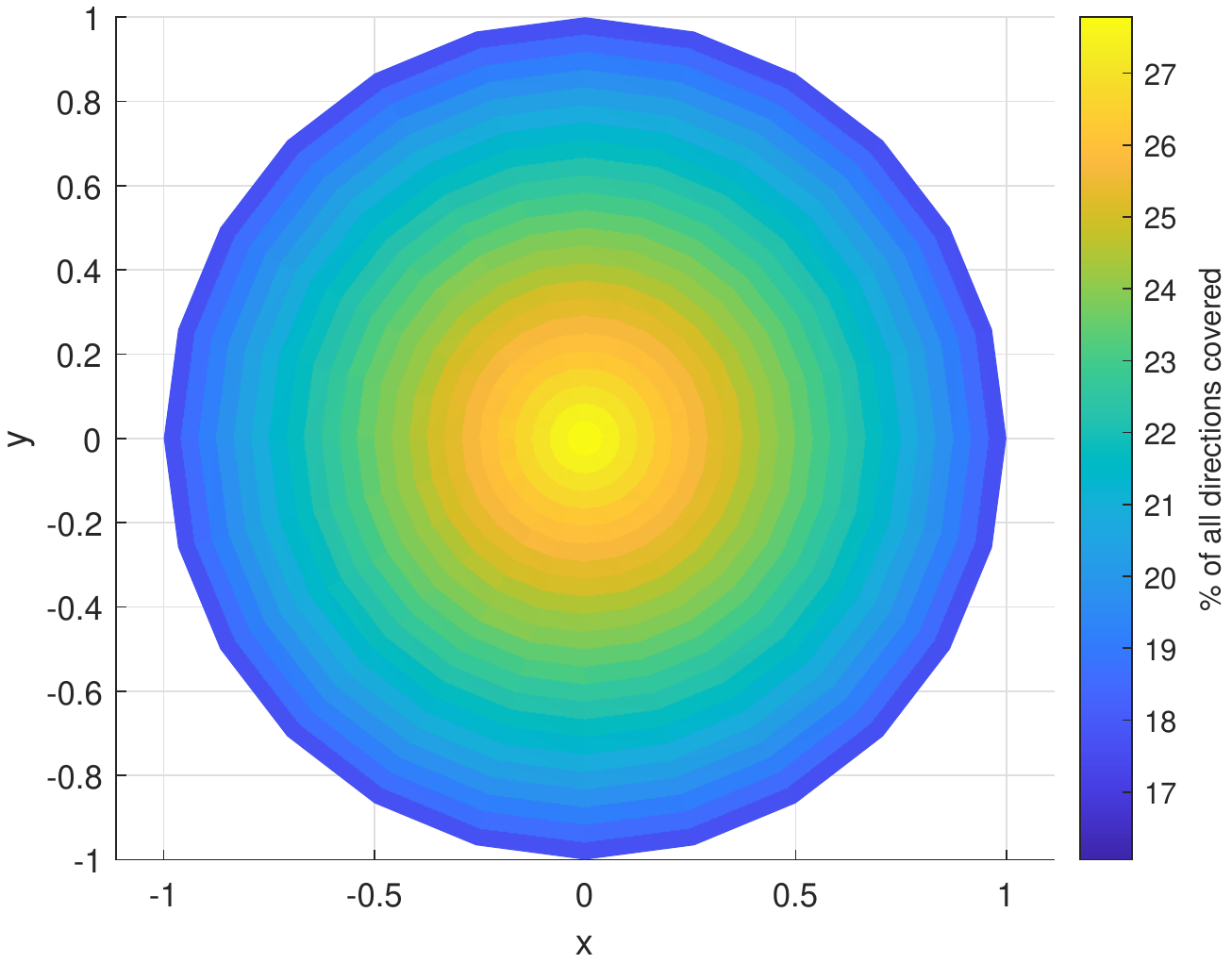}
\subcaption*{3D sphere data - cropped color bar} \label{Fvis_2f}
\end{subfigure}
\caption{Detectable singularities when $N=50$. Top row - $(x_1,x_2)$ plane. Bottom row - $(x_1,x_3)$ plane. Left - 4D spheroid data. Middle - 3D sphere data. Right - 3D sphere data with cropped color bar to better show the details.}
\label{Fvis_2}
\end{figure}

In Figure \ref{Fvis_2}, we present $(x_1,x_3)$ and $(x_1,x_2)$ plane cross-sections showing the directional coverage using spherical and spheroid data when $N=50$. 
The left-hand and middle columns of Figure \ref{Fvis_2} compare the directional coverage of spheroid and spherical data on the same scale. The right-hand column of Figure \ref{Fvis_2} shows the spherical wavefront detection with the color bar cropped so that the reader can better see the details. We can see that the wavefront coverage is significantly stronger using spheroid integral data, when compared to spherical, and thus the additional degree of freedom, $s$, has proven beneficial. 

To show what happens as the number of emitters, and level of $x_2$ discretization ($N$), varies, we plot curves of the average directional coverage over the $(x_2,x_3)$ and $(x_1,x_3)$ planes for varying $N$ in Figure \ref{Fvis_1}. For all $N\leq 100$, spheroid data offers greater average wavefront detection, when compared to spherical, for all $N\leq 100$, although the difference becomes less pronounced with increasing $N$.
\begin{figure}[!h]
\centering
\begin{subfigure}{0.32\textwidth}
\includegraphics[width=0.9\linewidth, height=4cm, keepaspectratio]{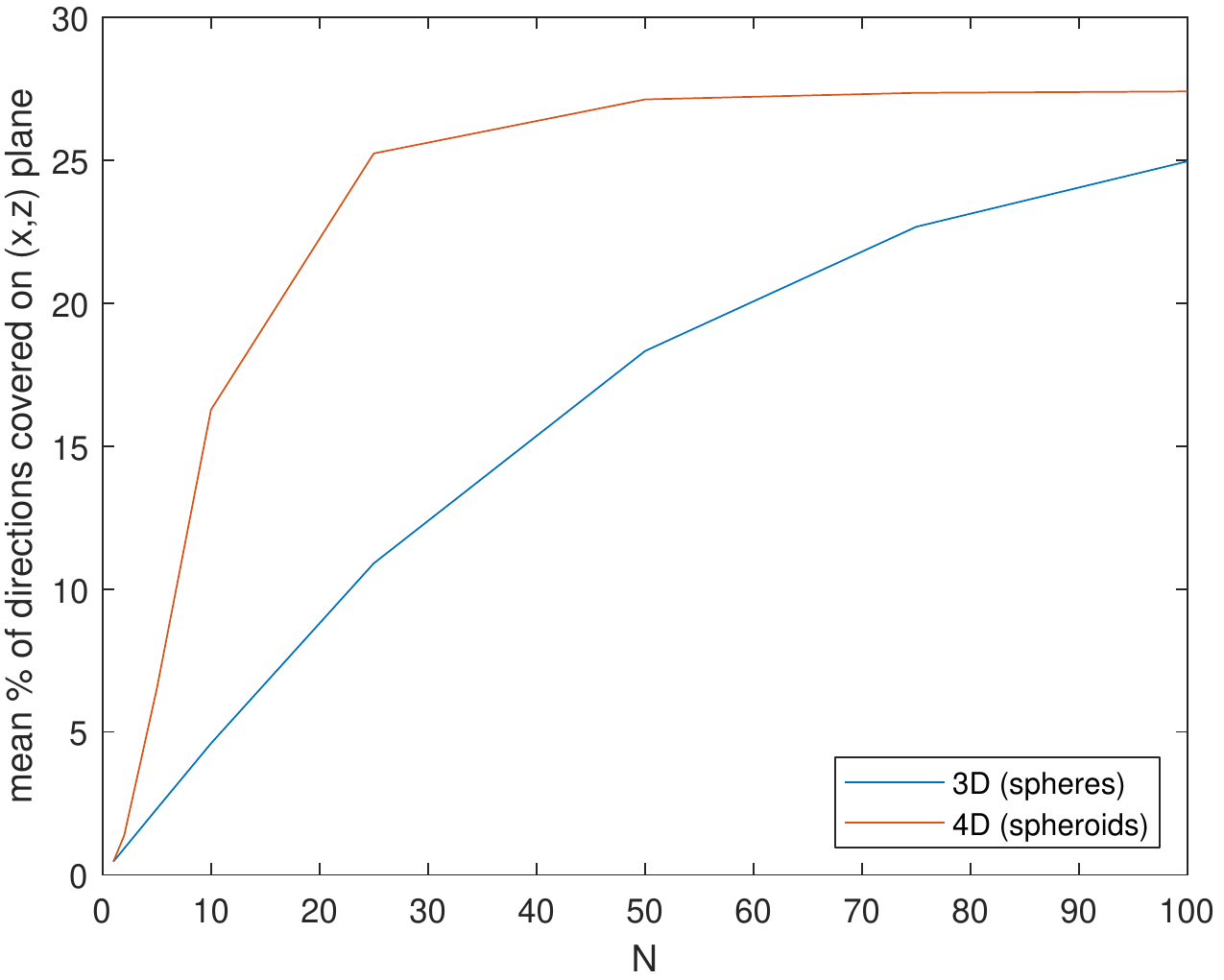}
\subcaption{$(x_1,x_2)$ plane} \label{Fvis_1a}
\end{subfigure}
\begin{subfigure}{0.32\textwidth}
\includegraphics[width=0.9\linewidth, height=4cm, keepaspectratio]{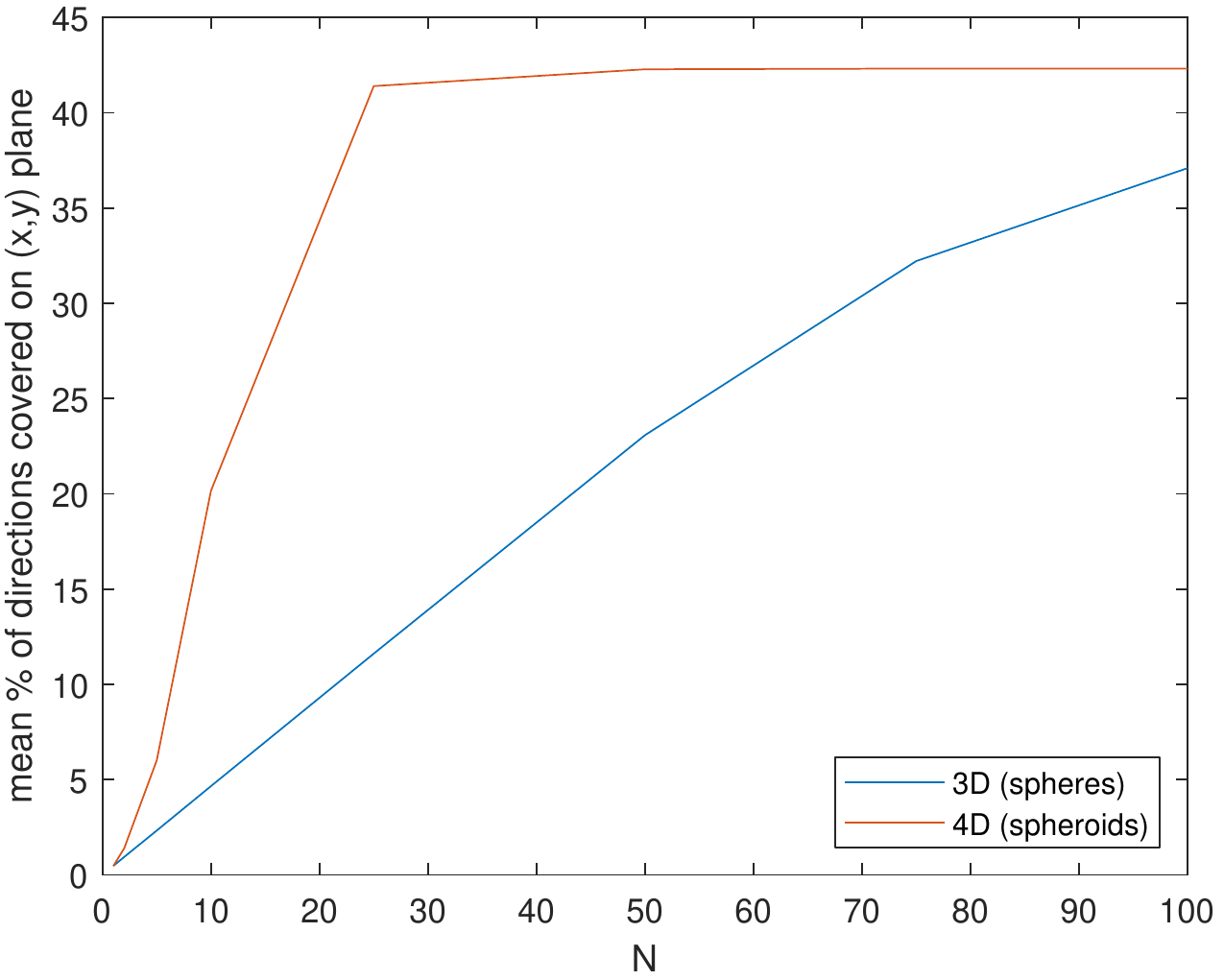} 
\subcaption{$(x_1,x_3)$ plane} \label{Fvis_1b}
\end{subfigure}
\caption{Mean directional coverage percentage over $(x_1,x_2)$ and $(x_1,x_3)$ plane cylinder cross-sections for varying $N$.}
\label{Fvis_1}
\end{figure}

Let $R_E(s,p,\phi_0,y_0) =  R_s(p,\phi_0,y_0)$.  If one were to design a URT scanner with a cylindrical set of emitters/receivers, as described in this section, it would be beneficial to measure spheroid integral data, $R_Ef$, over $R_1 f$ (spherical) data, as $R_E$ offers greater edge detection, especially with more limited emitter/receiver discretization. $R_E$ and $R_1$ both have the theoretical guarantees of injectivity and satisfaction of Bolker, as proven by our microlocal and injectivity theorems.

%

\section{Example image reconstructions in two-dimensions} 
\label{recon:sect}

In this section, we present two-dimensional image reconstructions
from spherical (circular) integral data. We consider two scanning
curves, $S$, one which is non-convex, and one which is convex. We
verify Corollary \ref{cor:RA-convex} by comparing the artifacts in (unfiltered) backprojection reconstructions of delta functions to artifacts predicted by our theory. In addition, we also investigate techniques to suppress the image artifacts in the non-convex curves case. Specifically, we apply discrete solvers and a Total Variation (TV) regularizer with smoothing parameter chosen by cross validation.

We define the Radon transform
\begin{equation}
R_cf(r,y_1)=Rf\paren{(y_1,q(y_1))^T,I_{2\times 2},r^2},
\end{equation}
where $q$ defines the set of circle centers, $S=q(\mathbb{R})$, and $r$ is the circle radius. In this section, we present reconstructions of $f$ from $R_cf$ data. We consider two example $q$, one non-convex with $q(y_1)=a(y_1+100)(y_1-100)y_1^2+100$ and
$a=5/10^6$, and one convex, with $q(y_1)=\frac{x^2}{50}-100$. See Figure \ref{F5} for an illustration of both curves. In both cases,  $\text{supp}(f) \subset \{(x_1,x_2)\in\mathbb{R}^2 : x_2>q(x_1), {\abs{x_1}<100}\}$.
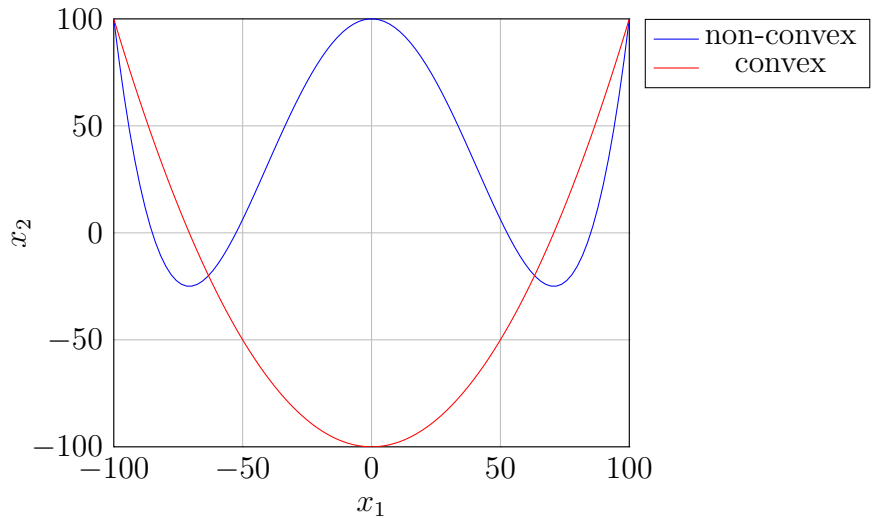
\begin{figure}[!h]
\centering
\begin{tikzpicture}[scale=1]
\begin{axis}[
  legend pos=outer north east ,xlabel=$x_1$, ylabel=$x_2$, xmin=-100,
xmax=100, ymin= -100,ymax=100, 
major tick length=0mm, grid=major
]
\addplot+[mark=none,samples=100,domain=-100:100] {(5/1000000)*(x+100)*(x-100)*x^2+100};
\addplot+[mark=none,samples=100,domain=-100:100] {(1/50)*x^2-100};
\legend{non-convex, convex}
\end{axis}
\end{tikzpicture}
\caption{Convex and non-convex measurement curve examples.} \label{F5}
\end{figure}
These example curves and function supports are chosen for two reasons. First, for $f\in
L^2_c(D)$, $R_cf$ uniquely determines $f$ for both $q$ considered, and hence there are no artifacts
due to a null space. This is true since $S$, for both $q$ in Figure \ref{F5}, is not the
union of a finite set and a Coxeter system of straight lines
\cite{agranovsky1996injectivity}. Second, $R_cf$ detects all
singularities in $D$. Thus, the only artifacts are due to noise and
the Bolker condition, which is our focus. We will also discover later, due to discretization limitations, streaking artifacts which occur along circles at the boundary of the data set. Similar boundary artifacts have been discussed previously in the literature, in regards to photo-acoustic tomography and sonar \cite{FrQu2015}.
\begin{figure}[!h]
\centering
\begin{subfigure}{0.32\textwidth}
\includegraphics[width=0.9\linewidth, height=0.9\linewidth, 
]{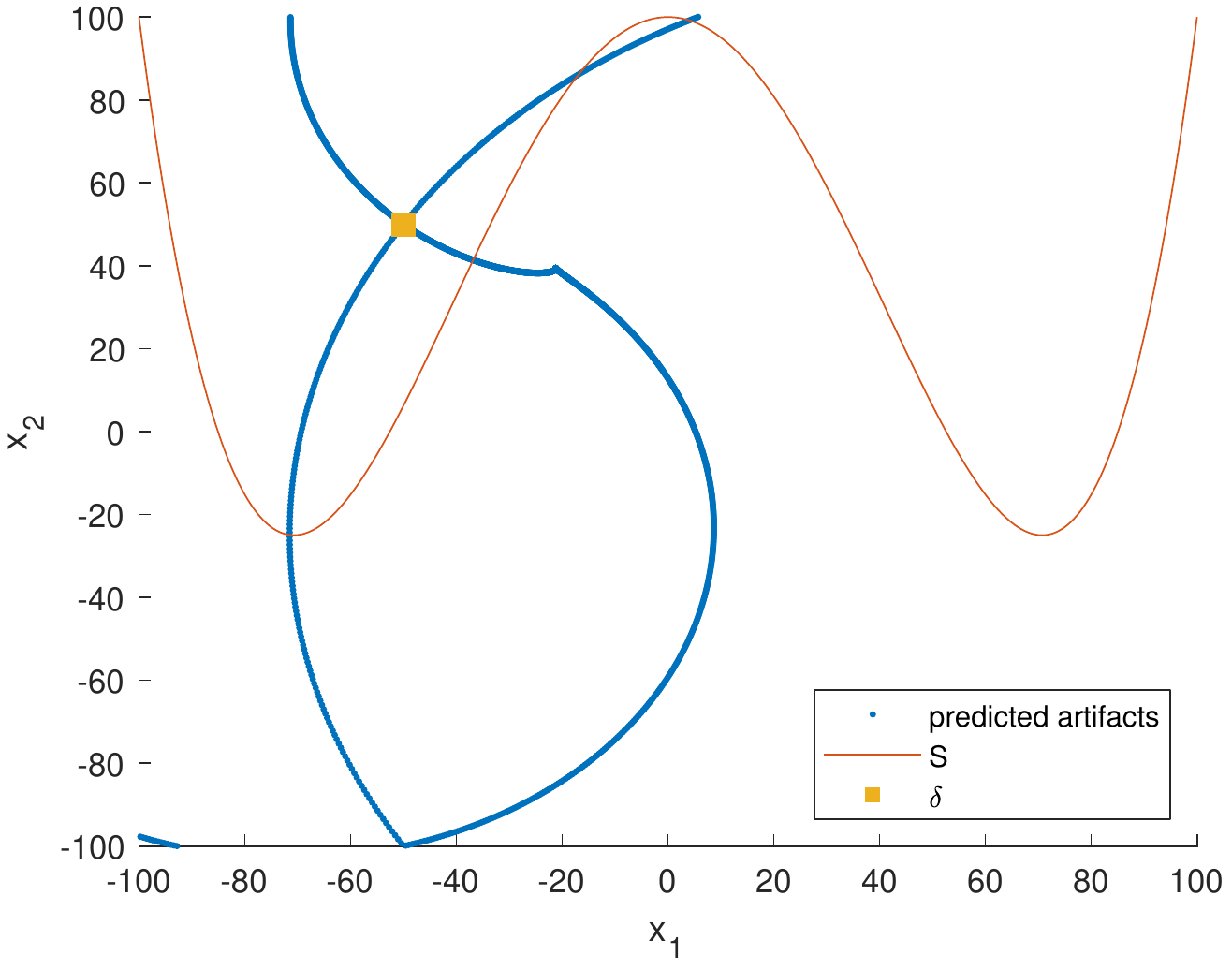}
\label{F6a}
\end{subfigure}
\begin{subfigure}{0.32\textwidth}
\includegraphics[width=0.9\linewidth,
height = 0.9\linewidth
]{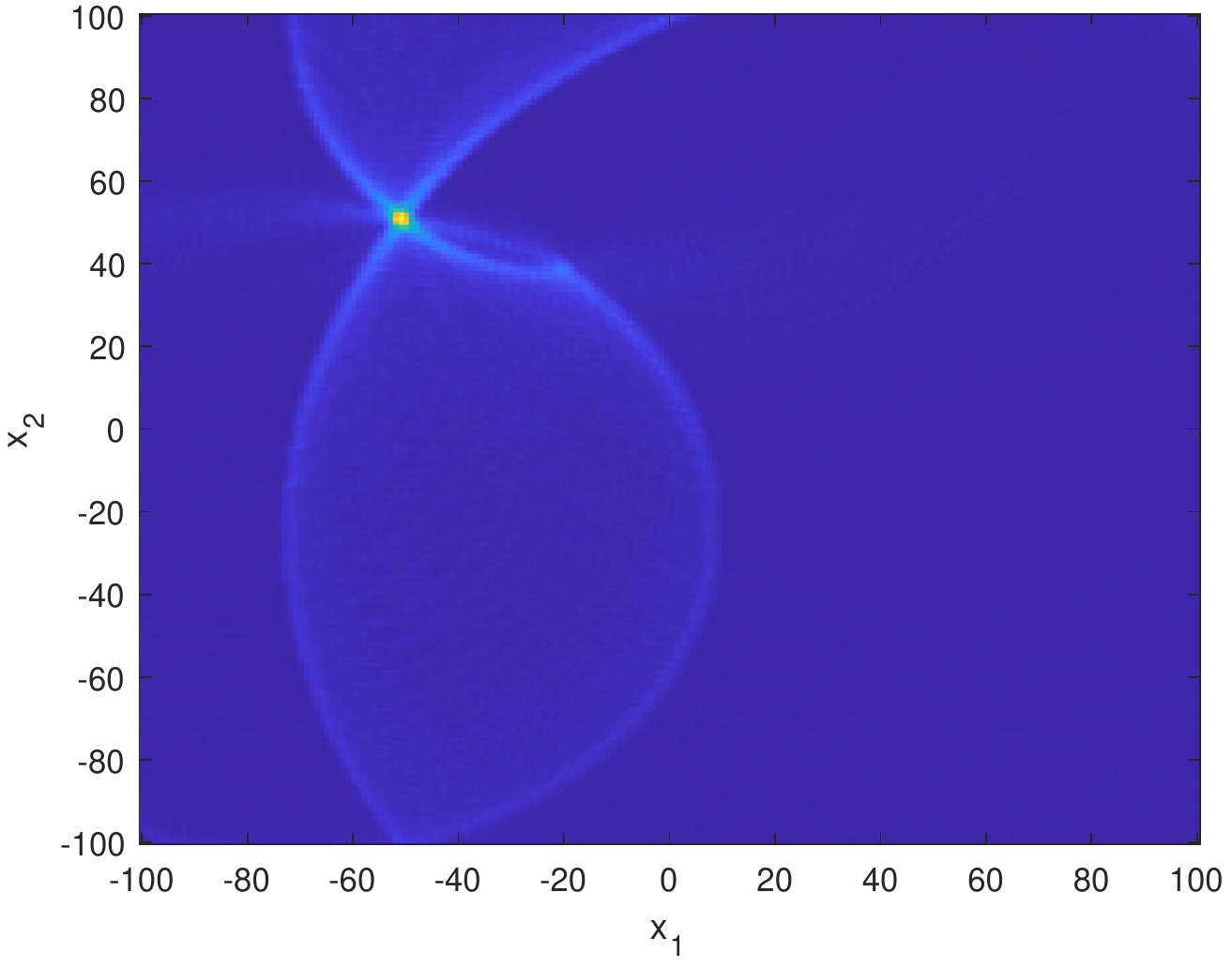} 
 \label{F6b}
\end{subfigure}

\begin{subfigure}{0.32\textwidth}
\includegraphics[width=0.9\linewidth,
height = 0.9\linewidth
]{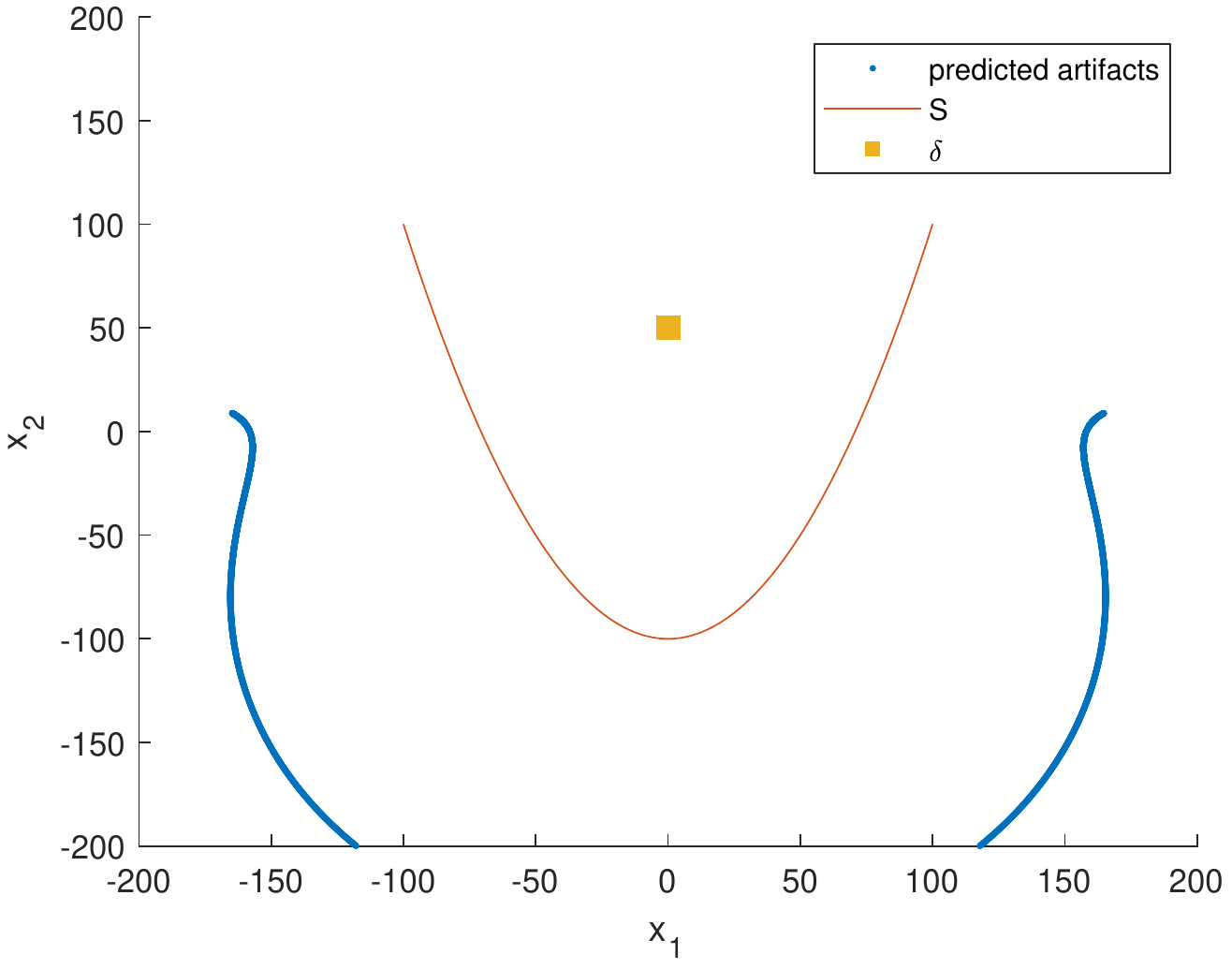} 
\subcaption*{predicted artifacts} \label{F6d}
\end{subfigure}
\begin{subfigure}{0.32\textwidth}
\includegraphics[width=0.9\linewidth,
height = 0.9\linewidth]{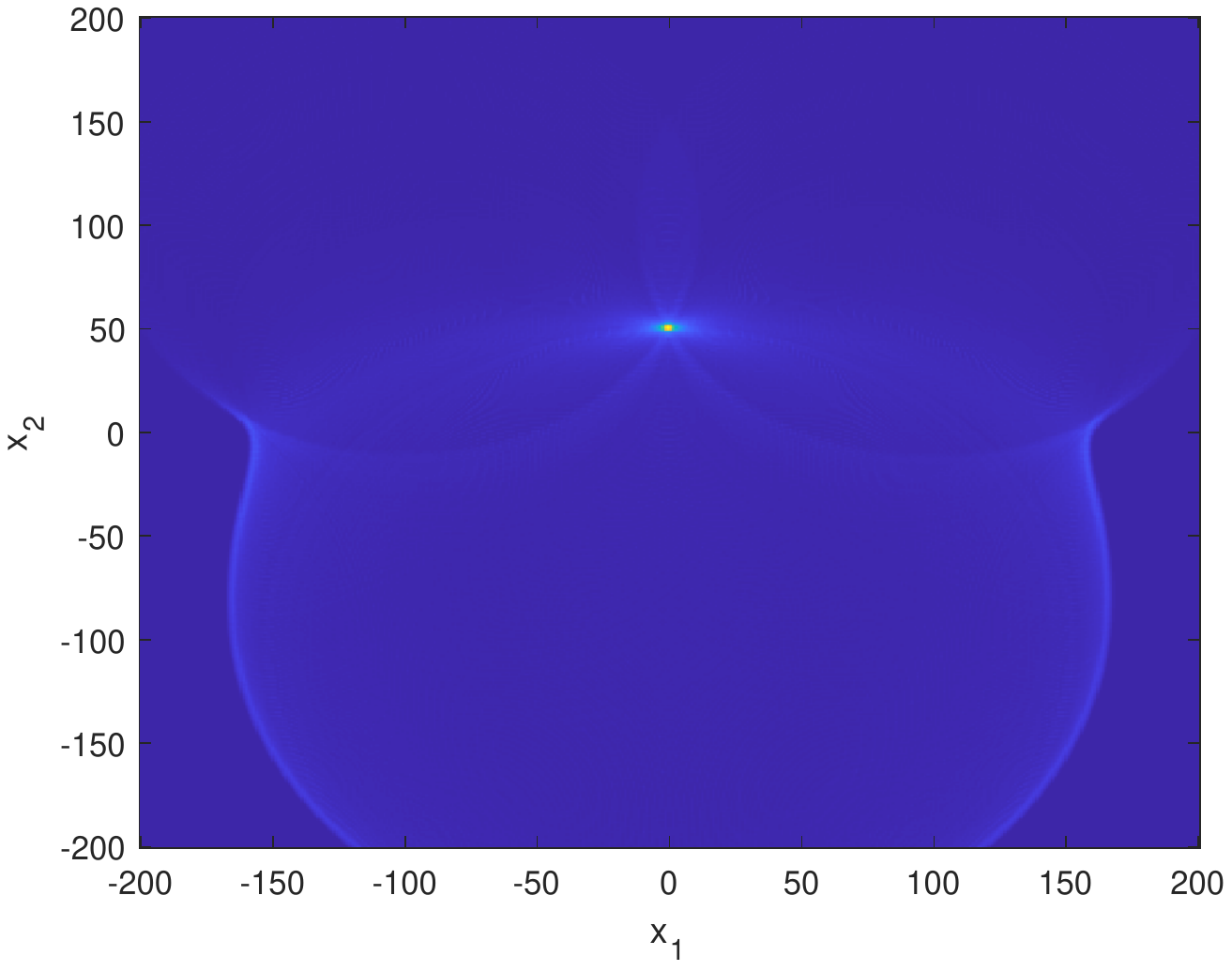}
\subcaption*{$A^TA\delta$ (observed artifacts)} \label{F6e}
\end{subfigure}
\caption{Predicted observed artifacts in delta function reconstructions. Top row - non-convex curve. Bottom row - convex curve.}
\label{F6}
\end{figure}

\subsection{Delta function reconstructions}
We now present unfiltered backprojection image reconstructions of
delta functions to validate the results of Theorem
\ref{thm:RA-general}. A delta function is supported at a single point
and has singularities (edges) in all directions. Thus, in a
reconstruction of a delta function, $\delta$, from circular integral
data with centers on $S=q(\mathbb{R})$, we would expect to see
artifacts which are the reflections of $\delta$ in planes tangent to
$S$. Let $A$ denote the discretized form of $R_c$. We sample circle
centers $c_i=\paren{-100+\frac{i-1}{2},q\paren{-100+\frac{i-1}{2}}}$
for $0\leq i\leq 401$, and radii $r_j=1+j$ for $1\leq j\leq199$. See
the right column of Figure \ref{F6} for example backprojection reconstructions of delta functions when $S$ is convex and non-convex. In the left column of Figure \ref{F6}, we show the artifacts due to Bolker as predicted by our theory. The predicted and observed artifacts match up well, and all artifact locations in the convex case lie outside the function support, which is in line with Theorem \ref{thm:RA-general}.  In the bottom-right of Figure \ref{F6}, there are additional circular shaped streaking artifacts which pass through the delta function. The circular streaks have centers at the end points of the red curve in the bottom-left of Figure \ref{F6}. These occur due to the sharp  cutoff at the boundary of the sinogram, since the circle centers are only finitely sampled. The boundary artifacts are less noticeable in the non-convex curve case, in the top-right of Figure \ref{F6}, and the artifacts due to Bolker appear more strongly.

\subsection{Phantom reconstructions}
Here, we present algebraic reconstructions of image phantoms from circular integral data. We consider two phantoms, one simple and one complex. The simple phantom consists of two rectangles with density 1, and the complex phantom is made up of a thin cross, a square, a hollow ellipse, and two circular phantoms, all of varying densities. The nonzero densities are arranged to fit within $D$, for both the convex and non-convex measurement curves considered. We present reconstructions using the Landweber method and TV regularization. Specifically, to implement TV, we find
\begin{equation}
\argmin_{\vx}\ (A\vx-\vb)^T(A\vx-\vb)+\alpha \sqrt{\|\nabla \vx\|^2_2+\beta^2},
\end{equation}
where $\alpha,\beta>0$ are regularization parameters. 
\begin{figure}[!h]
\centering
\begin{subfigure}{0.32\textwidth}
\includegraphics[width=0.9\linewidth, height=4cm, keepaspectratio]{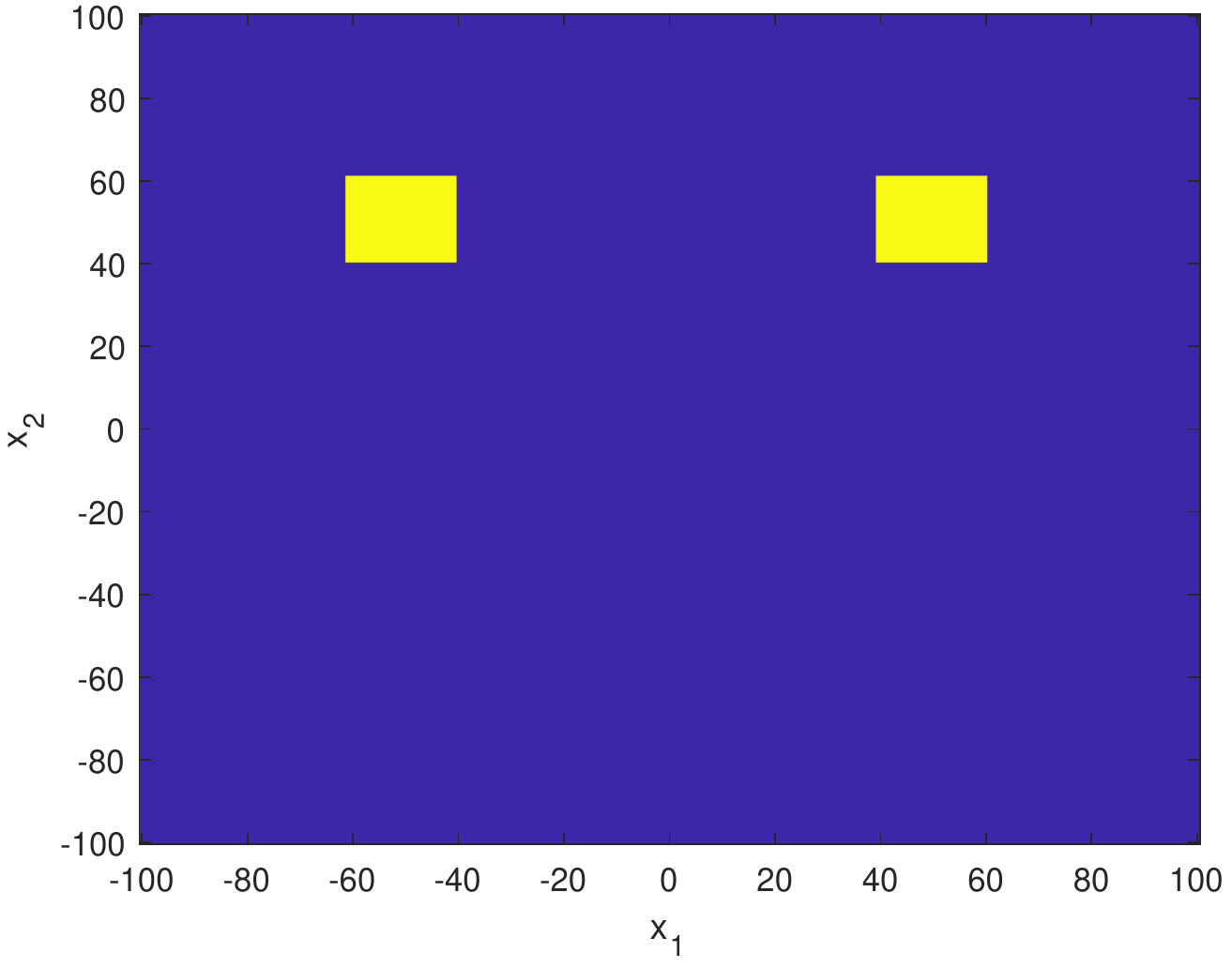}
\end{subfigure}
\begin{subfigure}{0.32\textwidth}
\includegraphics[width=0.9\linewidth, height=4cm, keepaspectratio]{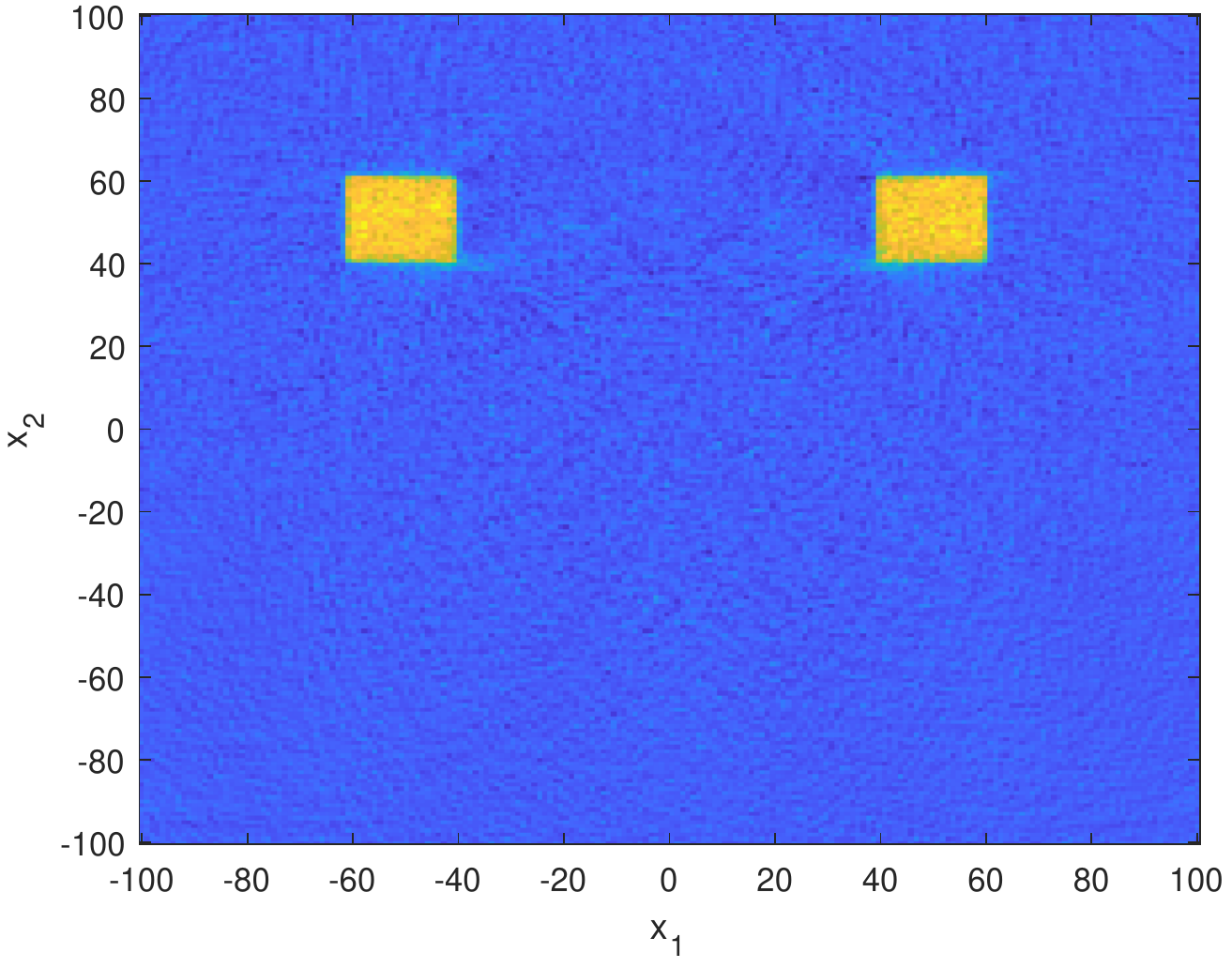} 
\end{subfigure}
\begin{subfigure}{0.32\textwidth}
\includegraphics[width=0.9\linewidth, height=4cm, keepaspectratio]{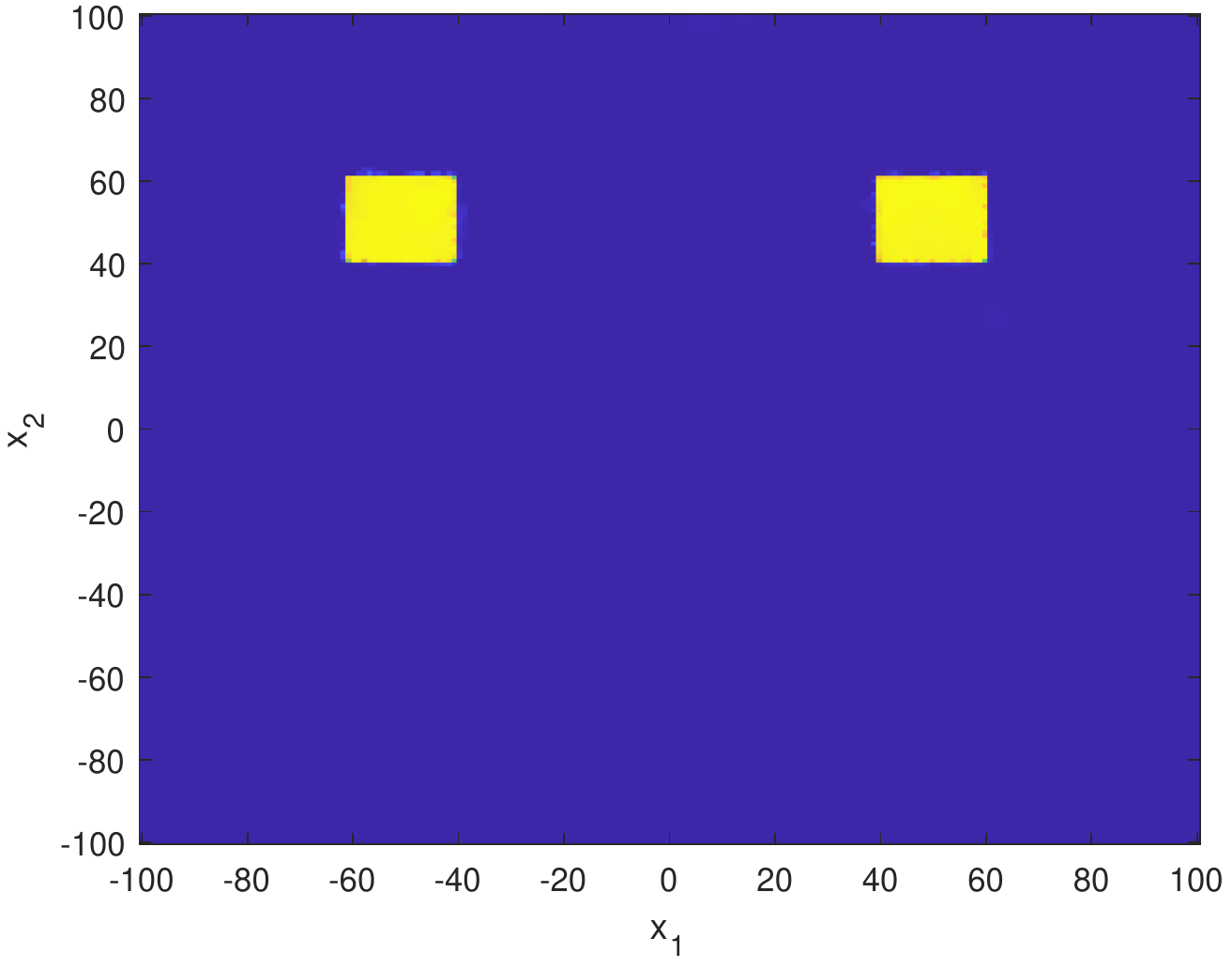}
\end{subfigure}
\begin{subfigure}{0.32\textwidth}
\includegraphics[width=0.9\linewidth, height=4cm, keepaspectratio]{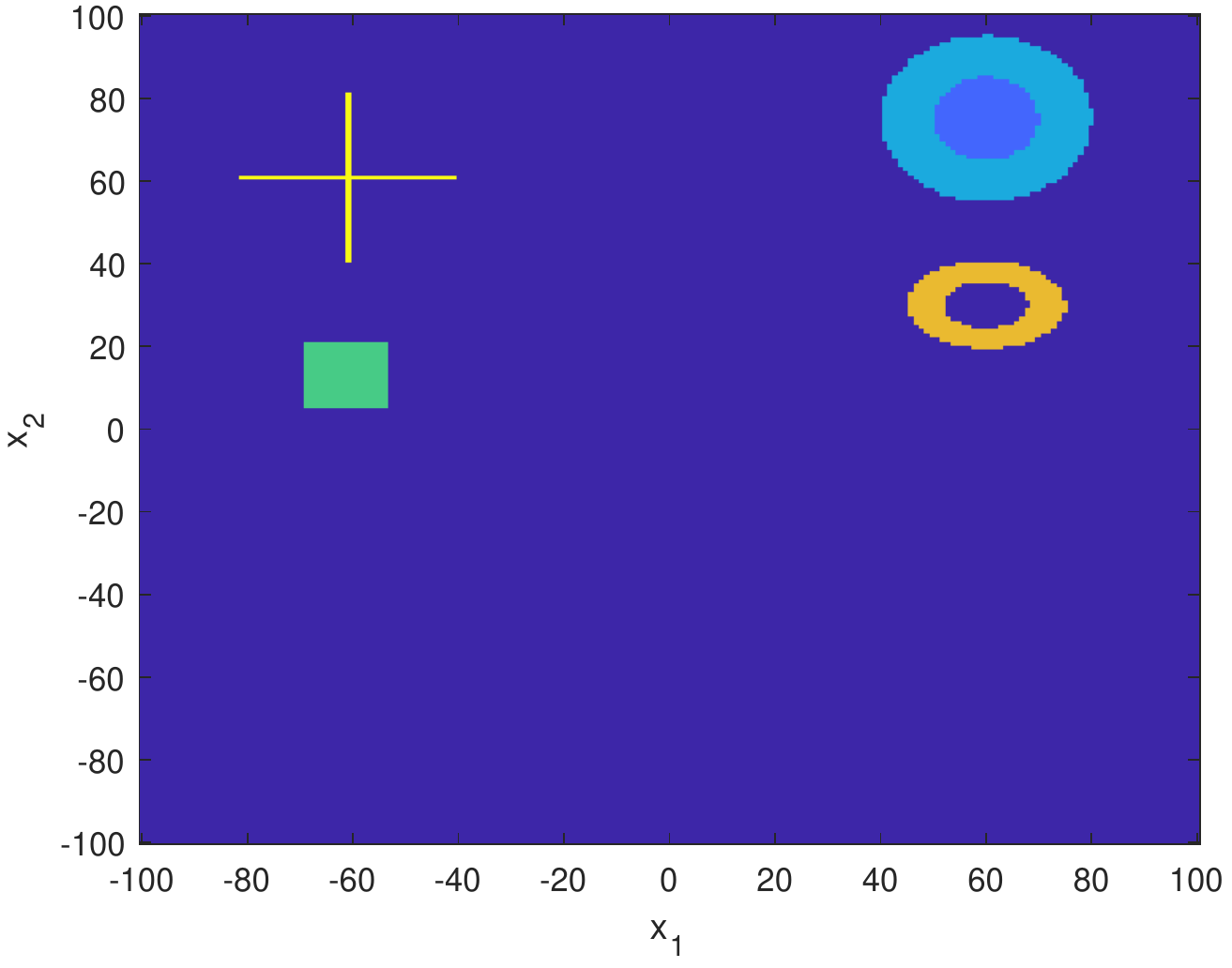}
\subcaption*{phantom} \label{F7d}
\end{subfigure}
\begin{subfigure}{0.32\textwidth}
\includegraphics[width=0.9\linewidth, height=4cm, keepaspectratio]{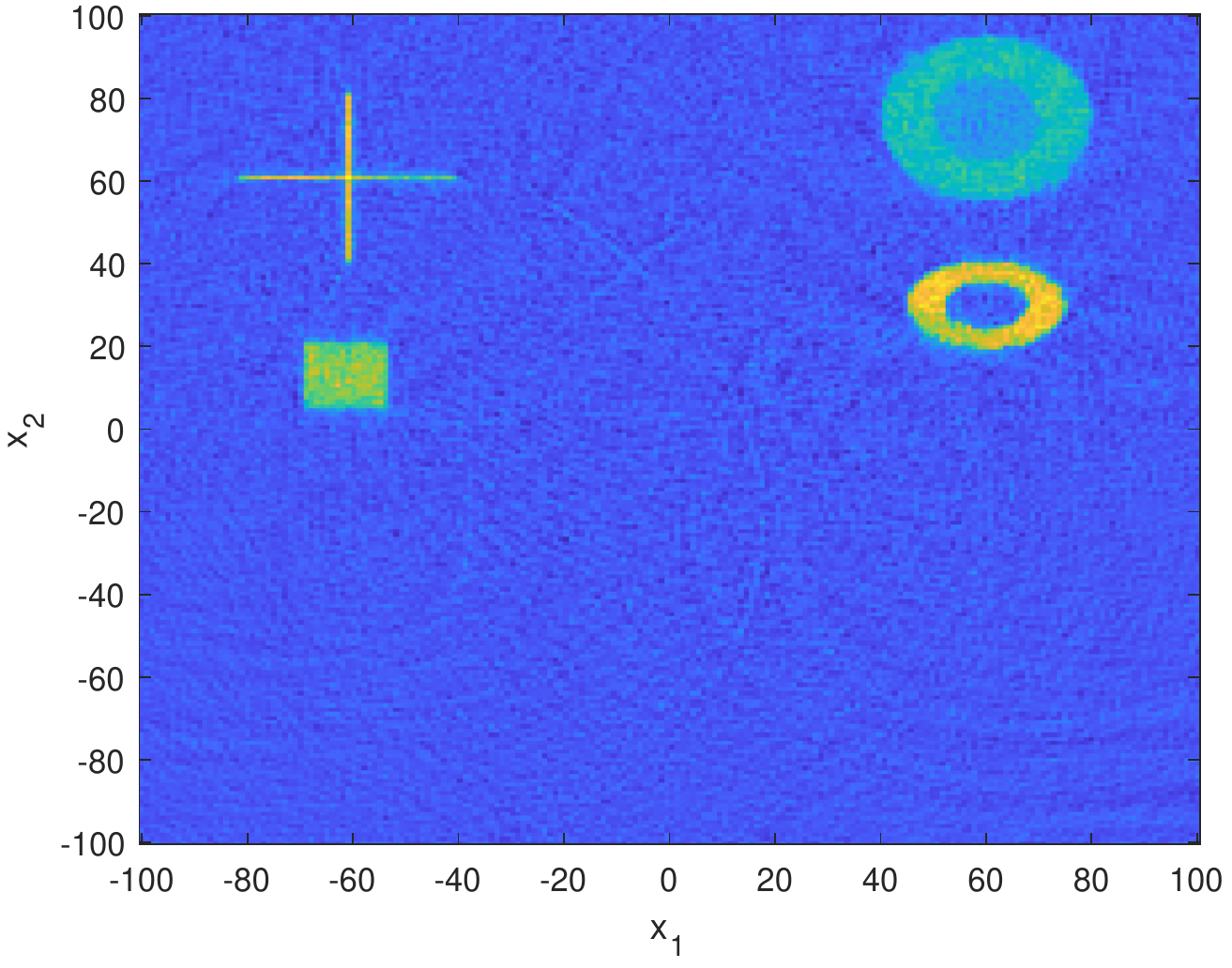} 
\subcaption*{Landweber} \label{F7e}
\end{subfigure}
\begin{subfigure}{0.32\textwidth}
\includegraphics[width=0.9\linewidth, height=4cm, keepaspectratio]{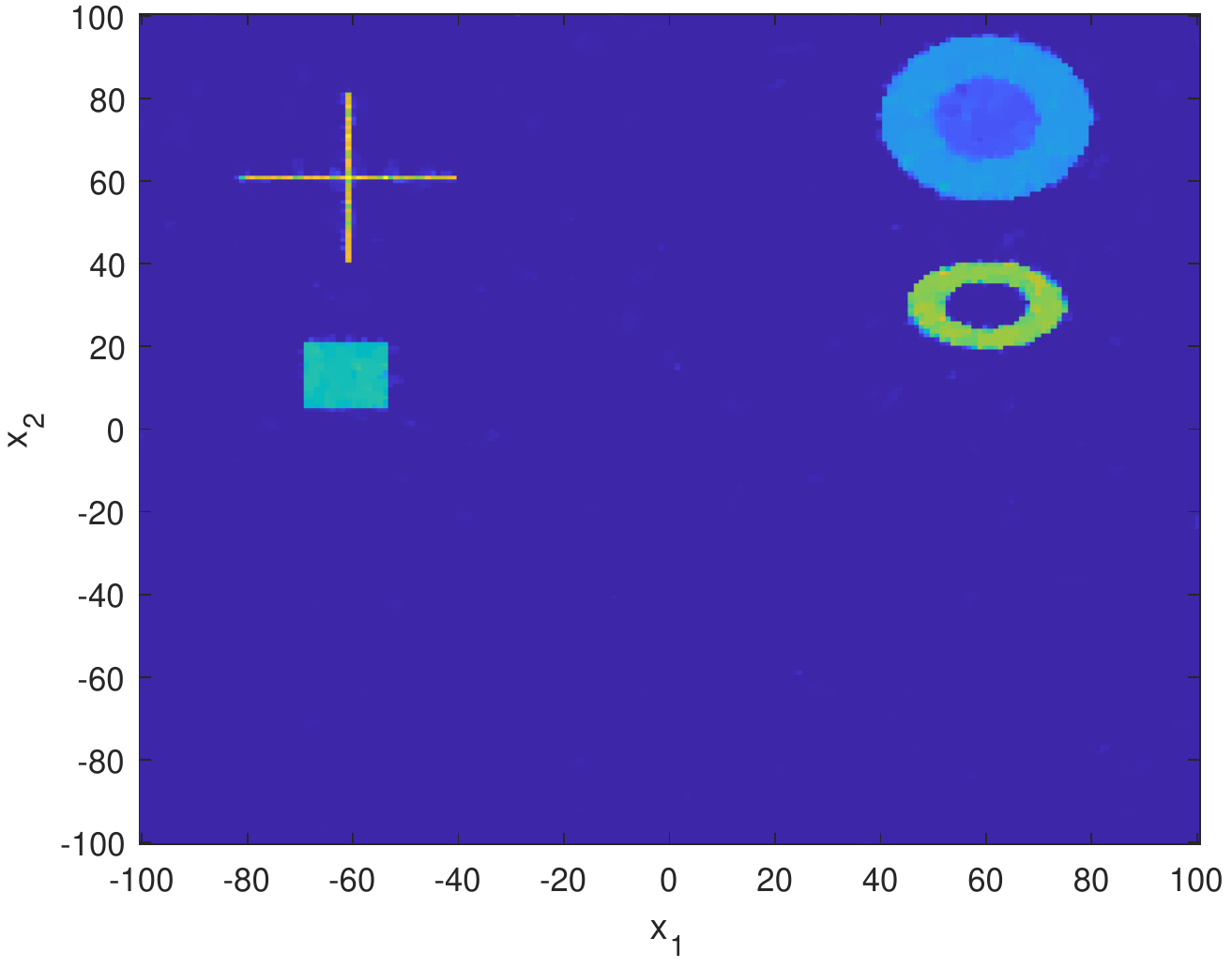}
\subcaption*{TV} \label{F7f}
\end{subfigure}
\caption{Reconstructions of image phantoms - non-convex curve.}
\label{F7}
\end{figure}

The data was simulated by $\vb=A_{\epsilon}\vx + \eta$, where
$A_{\epsilon}$ is a perturbed $A$. Specifically, to generate
$A_{\epsilon}$, the non-zero values of $A$ (i.e., the circular
integral weights) were multiplied by $1+u$, where $u \sim
U(-0.5,0.5)$, that is, $u$ is drawn from a uniform distribution on $[-0.5,0.5]$. We use $A_{\epsilon}$ to generate data to avoid inverse crime. The added noise is white noise drawn from a standard Gaussian $\eta \sim \mathcal{N}(0,\sigma)$, where $\sigma$ controls the noise level. The hyperparameters $\alpha,\beta$ were chosen using cross-validation, so there is no optimism in the results with respect to the selection of $\alpha,\beta$.
\begin{figure}[!h]
\centering
\begin{subfigure}{0.32\textwidth}
\includegraphics[width=0.9\linewidth, height=4cm, keepaspectratio]{phantom_1}
\end{subfigure}
\begin{subfigure}{0.32\textwidth}
\includegraphics[width=0.9\linewidth, height=4cm, keepaspectratio]{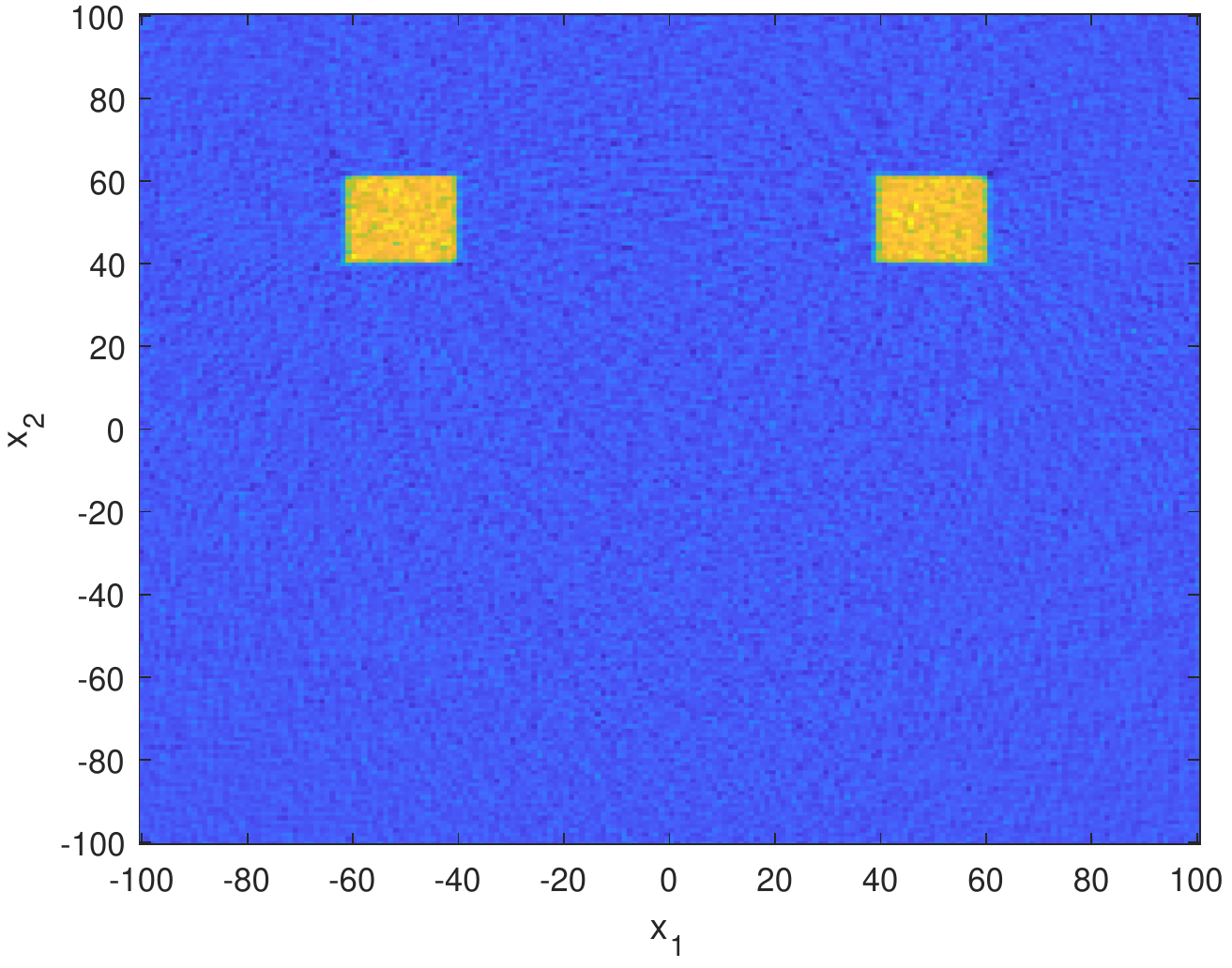} 
\end{subfigure}
\begin{subfigure}{0.32\textwidth}
\includegraphics[width=0.9\linewidth, height=4cm, keepaspectratio]{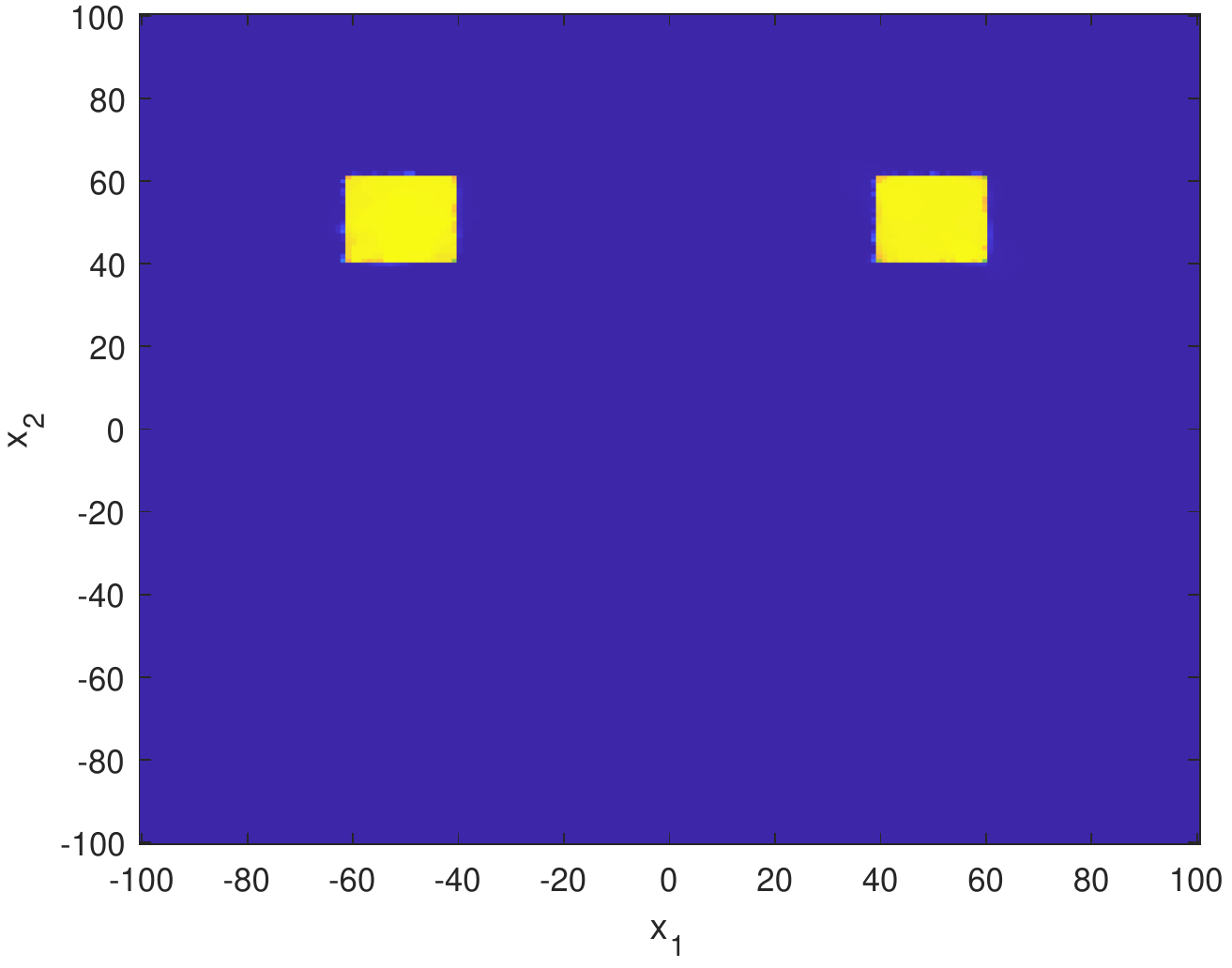}
\end{subfigure}
\begin{subfigure}{0.32\textwidth}
\includegraphics[width=0.9\linewidth, height=4cm, keepaspectratio]{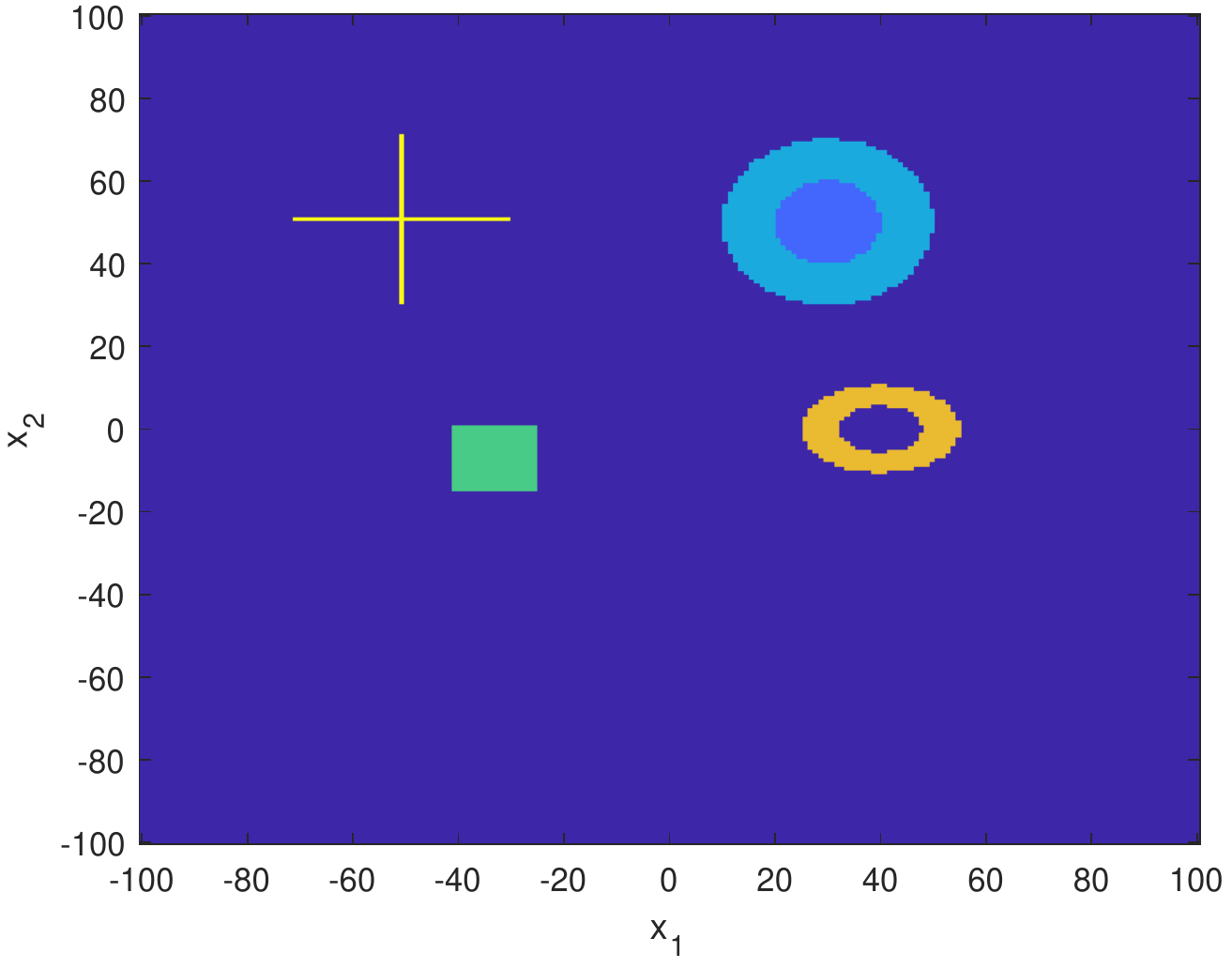}
\subcaption*{phantom} \label{F8d}
\end{subfigure}
\begin{subfigure}{0.32\textwidth}
\includegraphics[width=0.9\linewidth, height=4cm, keepaspectratio]{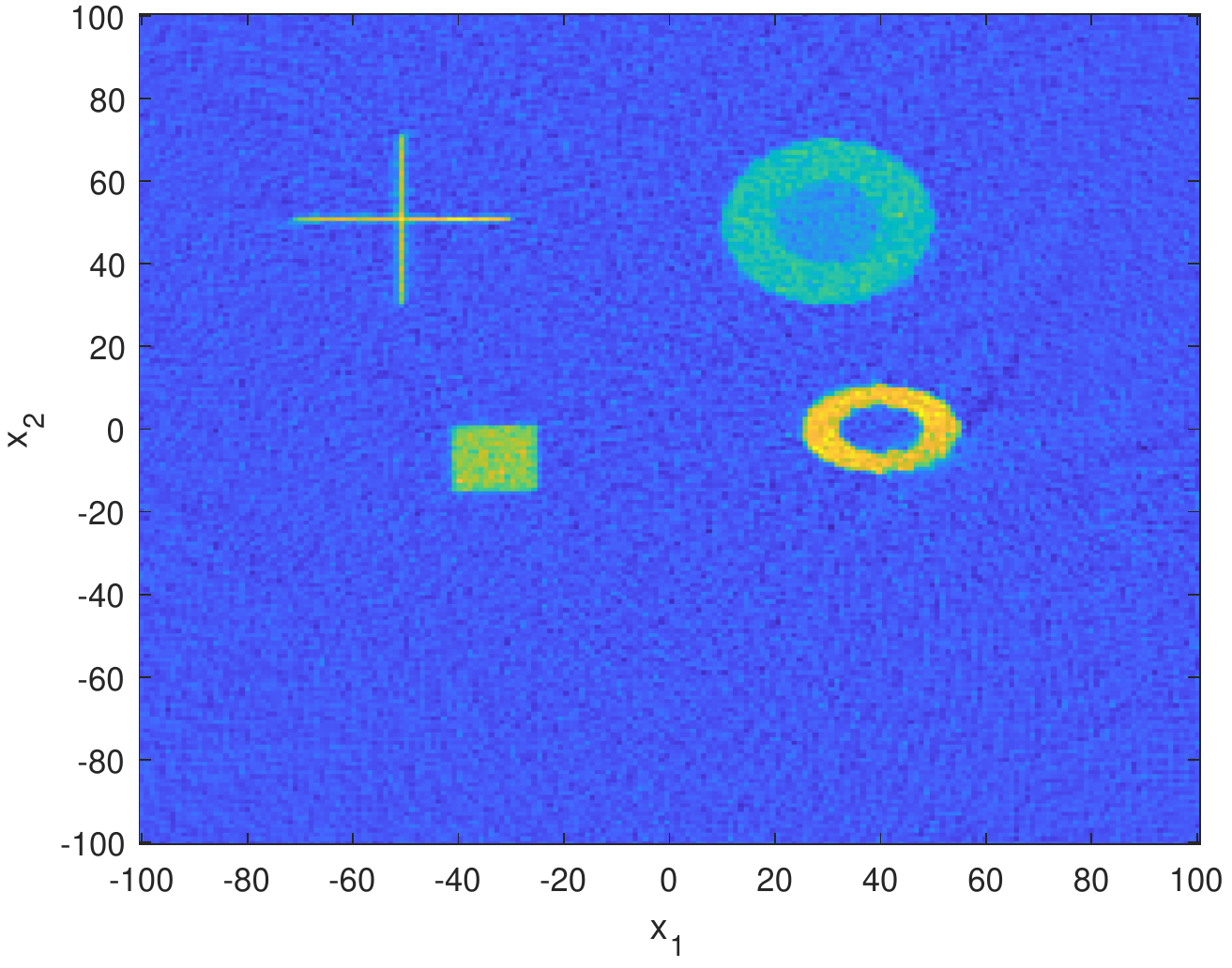} 
\subcaption*{Landweber} \label{F8e}
\end{subfigure}
\begin{subfigure}{0.32\textwidth}
\includegraphics[width=0.9\linewidth, height=4cm, keepaspectratio]{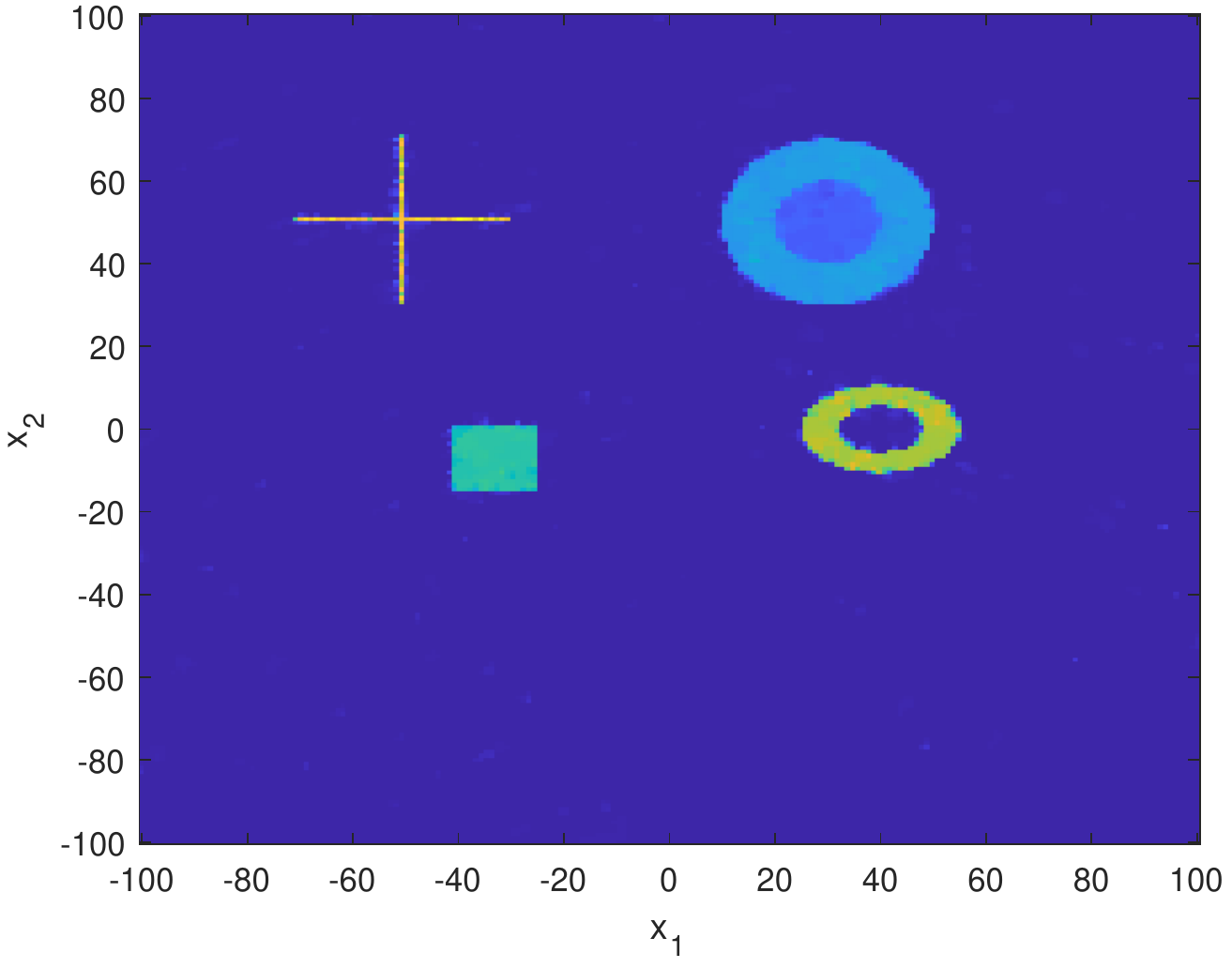}
\subcaption*{TV} \label{F8f}
\end{subfigure}
\caption{Reconstructions of image phantoms - convex curve.}
\label{F8}
\end{figure}

See Figure \ref{F7}, where we show reconstructions of the test phantoms using the Landweber method and TV in the non-convex curve case. We see severe artifacts in the Landweber reconstructions, which is not surprising given the inversion instabilities of $R$ when $S$ does not satisfy global convexity. The artifacts are largely suppressed in the TV reconstructions. TV  enforces sparse gradients, and thus it TV can combat additional, unwanted singularities in the reconstruction due to Bolker. In this example, TV is effective in removing the artifacts. 

In Figure \ref{F8}, we present image reconstruction of the simple and complex phantom in the convex curve case. The artifacts in the Landweber reconstruction of the simple phantom are minimal and we see only a background noise effect. In the Landweber reconstruction of the complex phantom, there are mild artifacts which appear as a streaking effect near the boundary of the hollow ellipse. In the TV reconstructions of both phantoms, the noise effects and streaking artifacts are suppressed.

\section{Conclusions and further work} In this paper, we presented
novel microlocal and injectivity results for a new generalized Radon
transform, $R$, which defines the integrals of square integrable
compactly supported functions over ellipsoids, hyperboloids, and
elliptic hyperboloids and generalizations with centers on a smooth surface, $S$. We showed
that $R$ was an FIO and proved that $R$ satisfied the Bolker condition
if condition \eqref{no tangents} holds, and this is if and only if
when $M$ is dimension zero.  We applied our theory to some
examples in section \ref{sect:examples}, and provided a more in-depth
analysis of a cylindrical scanning geometry of interest in URT, where
we proved injectivity results. Specifically, we showed that any $f \in
L^2_c$, with support in a cylindrical tube, could be reconstructed
uniquely from its integrals over spheroids with centers on a cylinder
which encapsulates the support of $f$. We also investigated the
visible singularities for the cylindrical geometry in section
\ref{visible-cylinder:sect}. In section \ref{recon:sect}, to validate
our microlocal theory and show the image artifacts, we presented image
reconstructions of delta functions and image phantoms. We also tested
a method of artifact reduction, using TV regularization and
cross-validation, which proved successful in the simulations
conducted.

In further work, we aim to investigate the potential practical applicability of the hyperboloid and elliptic hyperboloid Radon transform, as, in this work, we considered only applications of $R$ in fields such as URT, where the integral surfaces are spheroids. There is indication that the hyperboloid case may be of interest in proton therapy through the measurement of multiple gamma rays emitted as a cascade \cite{Panainoetal}.
We aim also to pursue three-dimensional image reconstruction methods for the cylindrical scanning geometry introduced in section \ref{sect:examples}. As evidenced in section \ref{visible-cylinder:sect}, the inverse of $R$ in the cylindrical case is severely unstable, as there are invisible singularities. Thus, it is likely that we will require strong regularization (e.g., TV or machine learning) to solve this problem. 

\section*{Acknowledgements:} The authors thank Plamen Stefanov for
helpful comments on this work. The third author's research was partially supported
by NSF grant 1712207 and Simons grant 708556. The first author wishes to acknowledge funding support from Brigham Ovarian Cancer Research Fund, The V Foundation, Abcam Inc., and Aspira Women’s Health.
Sean Holman was supported by the Engineering and Physical Sciences Research Council grant number EP/V007742/1.
\bibliographystyle{abbrv} 
\bibliography{RefEllipsoid_D1}

\begin{thebibliography}{10}

\bibitem{agranovsky1996injectivity}
M.~L. Agranovsky and E.~T. Quinto.
\newblock Injectivity sets for the {R}adon transform over circles and complete
  systems of radial functions.
\newblock {\em Journal of Functional Analysis}, 139(2):383--414, 1996.

\bibitem{ABKQ2013}
G.~Ambartsoumian, J.~Boman, V.~P. Krishnan, and E.~T. Quinto.
\newblock Microlocal analysis of an ultrasound transform with circular source
  and receiver trajectories.
\newblock In {\em Geometric analysis and integral geometry}, volume 598 of {\em
  Contemp. Math.}, pages 45--58. Amer. Math. Soc., Providence, RI, 2013.

\bibitem{ambartsoumian2010inversion}
G.~Ambartsoumian, R.~Gouia-Zarrad, and M.~A. Lewis.
\newblock Inversion of the circular radon transform on an annulus.
\newblock {\em Inverse Problems}, 26(10):105015, 2010.

\bibitem{andersson1988determination}
L.-E. Andersson.
\newblock On the determination of a function from spherical averages.
\newblock {\em SIAM Journal on Mathematical Analysis}, 19(1):214--232, 1988.

\bibitem{Caday:SAR}
P.~Caday.
\newblock Cancellation of singularities for synthetic aperture radar.
\newblock {\em Inverse Problems}, 31(1):015002, 22, 2015.

\bibitem{CokerTewfik}
J.~D. Coker and A.~H. Tewfik.
\newblock Multistatic sar image reconstruction based on an elliptical-geometry
  radon transform.
\newblock In {\em 2007 International Waveform Diversity and Design Conference},
  pages 204--208. IEEE, 2007.

\bibitem{Co1963}
A.~M. Cormack.
\newblock {Representation of a function by its line integrals with some
  radiological applications}.
\newblock {\em J. Appl. Physics}, 34(9):2722--2727, 1963.

\bibitem{duist}
J.~J. Duistermaat.
\newblock {\em Fourier integral operators}, volume 130 of {\em Progress in
  Mathematics}.
\newblock Birkh{\"a}user, Inc., Boston, MA, 1996.

\bibitem{duistermaat1996fourier}
J.~J. Duistermaat and L.~Hormander.
\newblock {\em {F}ourier integral operators}, volume~2.
\newblock Springer, 1996.

\bibitem{felea2013microlocal}
R.~Felea, R.~Gaburro, and C.~J. Nolan.
\newblock Microlocal analysis of sar imaging of a dynamic reflectivity
  function.
\newblock {\em SIAM Journal on Mathematical Analysis}, 45(5):2767--2789, 2013.

\bibitem{FKNQ:CM-CO-seismics}
R.~Felea, V.~P. Krishnan, C.~J. Nolan, and E.~T. Quinto.
\newblock Common midpoint versus common offset acquisition geometry in seismic
  imaging.
\newblock {\em Inverse Probl. Imaging}, 10(1):87--102, 2016.

\bibitem{FrQu2015}
J.~Frikel and E.~T. Quinto.
\newblock Artifacts in incomplete data tomography with applications to
  photoacoustic tomography and sonar.
\newblock {\em SIAM J. Appl. Math.}, 75(2):703--725, 2015.

\bibitem{gouia2012approximate}
R.~Gouia-Zarrad and G.~Ambartsoumian.
\newblock Approximate inversion algorithm of the elliptical radon transform.
\newblock In {\em 2012 8th International Symposium on Mechatronics and its
  Applications}, pages 1--4. IEEE, 2012.

\bibitem{grathwohl2020imaging}
C.~Grathwohl, P.~C. Kunstmann, E.~T. Quinto, and A.~Rieder.
\newblock Imaging with the elliptic radon transform in three dimensions from an
  analytical and numerical perspective.
\newblock {\em SIAM Journal on Imaging Sciences}, 13(4):2250--2280, 2020.

\bibitem{GS1977}
V.~Guillemin and S.~Sternberg.
\newblock {\em {Geometric Asymptotics}}.
\newblock American Mathematical Society, Providence, RI, 1977.

\bibitem{haltmeier2017spherical}
M.~Haltmeier and S.~Moon.
\newblock The spherical radon transform with centers on cylindrical surfaces.
\newblock {\em Journal of Mathematical Analysis and Applications},
  448(1):567--579, 2017.

\bibitem{HomanZhou}
A.~Homan and H.~Zhou.
\newblock {Injectivity and Stability for a Generic Class of Generalized Radon
  Transforms}.
\newblock {\em J Geom Anal}, 27:1515–1529, 2017.

\bibitem{hormanderI}
L.~H{\"o}rmander.
\newblock {\em The analysis of linear partial differential operators. {I}}.
\newblock Classics in Mathematics. Springer-Verlag, Berlin, 2003.
\newblock Distribution theory and {F}ourier analysis, Reprint of the second
  (1990) edition [Springer, Berlin].

\bibitem{hormanderIII}
L.~H\"{o}rmander.
\newblock {\em The analysis of linear partial differential operators. {III}}.
\newblock Classics in Mathematics. Springer, Berlin, 2007.
\newblock Pseudo-differential operators, Reprint of the 1994 edition.

\bibitem{hormander}
L.~H\"{o}rmander.
\newblock {\em The analysis of linear partial differential operators. {IV}}.
\newblock Classics in Mathematics. Springer-Verlag, Berlin, 2009.
\newblock Fourier integral operators, Reprint of the 1994 edition.

\bibitem{klein2003inverting}
J.~Klein.
\newblock Inverting the spherical radon transform for physically meaningful
  functions.
\newblock {\em arXiv preprint math/0307348}, 2003.

\bibitem{krishnan2012microlocal}
V.~P. Krishnan, H.~Levinson, and E.~T. Quinto.
\newblock Microlocal analysis of elliptical radon transforms with foci on a
  line.
\newblock In {\em The mathematical legacy of Leon Ehrenpreis}, pages 163--182.
  Springer, 2012.

\bibitem{kunyansky2007explicit}
L.~A. Kunyansky.
\newblock Explicit inversion formulae for the spherical mean radon transform.
\newblock {\em Inverse problems}, 23(1):373, 2007.

\bibitem{moon2016inversion}
S.~Moon and J.~Heo.
\newblock Inversion of the elliptical radon transform arising in migration
  imaging using the regular radon transform.
\newblock {\em Journal of Mathematical Analysis and Applications},
  436(1):138--148, 2016.

\bibitem{Nguyen-Pham}
L.~V. Nguyen and T.~A. Pham.
\newblock Microlocal analysis for spherical {R}adon transform: two nonstandard
  problems.
\newblock {\em Inverse Problems}, 35(7):074001, 15, 2019.

\bibitem{Panainoetal}
C.~M. Panaino, R.~I. Mackay, M.~Sotiropoulos, K.~J. Kirkby, and M.~J. Taylor.
\newblock Full 3d position reconstruction of a radioactive source based on a
  novel hyperbolic geometrical algorithm.
\newblock {\em Computer Physics Communications}, 252, 2020.

\bibitem{quinto}
E.~T. Quinto.
\newblock {The dependence of the generalized {R}adon transform on defining
  measures}.
\newblock {\em Trans. Amer. Math. Soc.}, 257:331--346, 1980.

\bibitem{Q1983-rotation}
E.~T. Quinto.
\newblock {The invertibility of rotation invariant Radon transforms}.
\newblock {\em J. Math. Anal. Appl.}, 94:602--603, 1983.

\bibitem{rubin2002inversion}
B.~Rubin.
\newblock Inversion formulas for the spherical radon transform and the
  generalized cosine transform.
\newblock {\em Advances in Applied Mathematics}, 29(3):471--497, 2002.

\bibitem{Rubin-sonar}
B.~Rubin.
\newblock {A note on the sonar transform and related Radon transforms}.
\newblock {\em arXiv:2206.05854 [math.FA]}, page~13, 2022.

\bibitem{Rudin:FA}
W.~Rudin.
\newblock {\em Functional analysis}.
\newblock McGraw-Hill Book Co., New York, 1973.
\newblock McGraw-Hill Series in Higher Mathematics.

\bibitem{SU:SAR2013}
P.~Stefanov and G.~Uhlmann.
\newblock Is a curved flight path in {SAR} better than a straight one?
\newblock {\em SIAM J. Appl. Math.}, 73(4):1596--1612, 2013.

\bibitem{Tricomi}
F.~G. Tricomi.
\newblock {\em Integral equations}.
\newblock Dover Publications, Inc., New York, 1985.
\newblock Reprint of the 1957 original.

\bibitem{webberholman}
J.~W. Webber and S.~Holman.
\newblock Microlocal analysis of a spindle transform.
\newblock {\em Inverse Problems \& Imaging}, 13(2):231--261, 2019.

\bibitem{WebberQuinto2020I}
J.~W. Webber and E.~T. Quinto.
\newblock Microlocal analysis of a compton tomography problem.
\newblock {\em SIAM Journal on Imaging Sciences}, 13(2):746--774, 2020.

\end{thebibliography}

\end{document}